\newlist{PROPenum}{enumerate}{1} % should only occur inside definition env.
\setlist[PROPenum]{label=(\roman*),ref=\thePROP\,(\roman*)}
\crefname{PROPenumi}{Proposition}{Propositions}
\newlist{itemcases}{enumerate}{1}
\setlist[itemcases]{
label=\textbf{Case~(\arabic*).},
ref=(\arabic*),
leftmargin=*
}
\crefname{itemcasesi}{Case}{Cases}
\Crefname{itemcasesi}{Case}{Cases}
\newlist{subitemcases}{enumerate}{1}
\setlist[subitemcases]{
label=\textbf{Case~3\alph*.},
ref=3\alph*,
leftmargin=*
}
\crefname{subitemcasesi}{Case}{Cases}
\Crefname{subitemcasesi}{Case}{Cases}
\newverbcommand{\CMRverb}{\tiny\color{blue}}{}
\newverbcommand{\GRverb}{\tiny\color{teal}}{}
\newverbcommand{\STverb}{\tiny\color{cyan}}{}
\newverbcommand{\Pverb}{\tiny\color{violet}}{}
\newverbcommand{\Averb}{\tiny\color{brown}}{}
\theoremstyle{plain}
\newtheorem{THM}{Theorem}[section]
\newtheorem{PROP}[THM]{Proposition}
\newtheorem{LEM}[THM]{Lemma}
\newtheorem*{CLAIME}{Claim}
\newtheorem{QUEST*}{Question}
\theoremstyle{definition}
\newtheorem{RMK}[THM]{Remark}
\DeclareMathOperator{\Leb}{Leb}
\DeclareMathOperator{\dev}{dev}
\newcommand{\ie}{\text{i.e. }}
\newcommand{\dfn}{\mathrel{\mathop:}=}
\newcommand{\C}{\mathbb{C}}
\newcommand{\FF}{\mathcal{F}}
\newcommand{\Q}{\mathbb{Q}}
\newcommand{\R}{\mathbb{R}}
\newcommand{\Z}{\mathbb{Z}}
\newcommand{\N}{\mathbb{N}}
\newcommand{\E}{\mathcal{E}}
\renewcommand{\d}{\mathrm{d}}
\newcommand{\area}{\mathrm{area}}
\newcommand{\tremspace}{\mathcal{T}}
\newcommand{\trem}{\mathrm{trem}}
\newcommand{\hol}{\mathrm{hol}}
\newcommand{\holx}{\mathrm{hol}^{(x)}}
\newcommand{\holy}{\mathrm{hol}^{(y)}}
\renewcommand{\H}{\mathcal{H}}
\DeclareMathOperator{\Rel}{\mathrm{Rel}}
\DeclareMathOperator{\length}{length}
\renewcommand{\bar}{\overline}
\renewcommand{\|}{|\!|} %For norms
\title[Some real {Rel} trajectories in $\mathcal{H}(1,1)$ that are not recurrent]{Some real {Rel} trajectories in $\mathcal{H}(1,1)$ that are not recurrent}
\author[Carlos Ospina]{Carlos Ospina}
\address{School of Mathematical Sciences\\
Tel Aviv University\\
Tel Aviv\\
69978\\
Israel}
\email{ospina.math@icloud.com}
\begin{document}

\begin{abstract}
We study the real Rel orbits of certain translation surfaces in the stratum $\H(1,1)$, specifically surfaces that are tremors of the locus of branched double covers of tori. We give necessary and sufficient conditions on tremors of a surface so that the real Rel orbit is recurrent. As a consequence, we are able to provide explicit examples of trajectories of real Rel that are not recurrent.
\end{abstract}

\maketitle

\section{Introduction}
\textcolor{black}{For $g \geq 1$ and nonnegative integers $r_1,\dots,r_k$ satisfying
\[
\sum_{i=1}^k r_i = 2g-2,
\]
the stratum $\H(r_1,\dots,r_k)$ parametrizes translation surfaces of genus $g$ with $k$ marked singularities, where the $i$th singularity has cone angle $2(r_i+1)\pi$. In this work we \emph{distinguish} each marked singularity. }

If $k \geq 2$, there is a foliation of dimension $k-1$ called real Rel ---the formal definitions are in \Cref{sec:Rel}. Two sufficiently close surfaces in a leaf are obtained from each other by a surgery that only modifies the relative horizontal positions of the singularities.
When $k=2$, the leaves of this  foliation have dimension one and determine a directional flow. We denote by $\Rel_t(\cdot)$ the real Rel flow in this situation.

Let $\E \subset \H(1,1)$ be the subset of translation surfaces that are two identical tori glued along a slit. See \Cref{Section:Preliminaries} for more details.
If $M \in \E$ is horizontally aperiodic and non-minimal, then we can represent $M$ by a slit construction $T_1\#_IT_2$, where $T=T_1=T_2$ is a horizontally minimal torus and $I$ (the slit) is a horizontal segment embedded on $T$. 
\textcolor{black}{In this case, $\Rel_t M$ for $t \in (0,\infty)$ makes the slit longer while keeping it horizontal.} 
We say $N \in \H(1,1)$ is \emph{recurrent} if there exists a sequence $t_i \to \infty$ such that $\lim_{i \to \infty} \Rel_{t_i}N = N.$ 

\textcolor{black}{%
We first give two basic examples. Suppose that $T=\R^2/\Z^2$ and that $I$ is a vertical segment of length $h\in(0,1)$. The resulting surface $M$ has a horizontal decomposition into three cylinders $C_1,C_2,C_3$. The height, circumference and twist of these cylinders are $(h_i,c_i,\tau_i)=(1-h,1,0)$, $i=1,2$,
and $(h_3,c_3,\tau_3)=(h,2,0)$. 
The real Rel deformation preserves this cylinder decomposition, keeping the heights and circumferences fixed, but changes the twist parameters at constant speed.
It is a fact of the translation structure that, for such a cylinder decomposition with fixed heights and circumferences, the surface is unchanged precisely when the twist vector satisfies
\[
(\tau_1,\tau_2,\tau_3)=(0,0,0)
\mod (c_1,c_2,c_3).
\]
Let $x_1$ and $x_2$ be the endpoints of the slit $I$, where $x_1$ is the bottom endpoint and $x_2$ is the top endpoint. With the convention that real Rel moves $x_2$ relative to $x_1$, the corresponding twist parameters of $\Rel_t M$ are $(-t,-t,t)$.
Since $(c_1,c_2,c_3)=(1,1,2)$, we have
\[
(-2,-2,2)=(0,0,0) \mod (1,1,2).
\]
Therefore $\Rel_2 M=M$ and the positive real Rel trajectory of $M$ is recurrent.
On the other hand, suppose that $I$ is horizontal. Then, after a finite time $t_0>0$, the two singularities collide, so $\Rel_{t_0}M$ is no longer contained in $\mathcal{H}(1,1)$. Thus, the real Rel trajectory diverges and is not recurrent.
}
Let $T=\R^2/\Lambda$ be a torus and let $(a,b)$ and $(c,d)$ be two generators of $\Lambda$. Our main result is the following.
\begin{THM}
\label{thm:mainthm}
Let $M \in \E$ be a horizontally aperiodic and non-minimal surface and let $\beta \in \tremspace_M$ be a tremor such that $\beta \not= l\d y$ for all $l \in \R\backslash\{0\}$. Then, the surface $\trem_\beta M$ is recurrent
if and only if $\frac{b}{d} \not \in \Q$ is well approximable.
\end{THM}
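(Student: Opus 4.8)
The plan is to put $N\dfn trem_\beta M$ into a normal form, describe its real Rel orbit explicitly, and translate recurrence into a simultaneous Diophantine-approximation statement on $T$ that reduces to well-approximability of $b/d$.

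\emph{Normal form.} The horizontal foliation of $M$ has two minimal components, one filling each copy $T_1,T_2$ of $T$, joined along the saddle connection $I$; write $\beta=c_1\mu_1+c_2\mu_2$ with $\mu_j$ the Lebesgue transverse measure on the component filling $T_j$, so that the hypothesis $\beta\neq l\,\d y$ (for $l\neq0$) says exactly $c_1\neq c_2$. Using additivity of tremors and the fact that a horizontal shear commutes with $Rel_t$ and acts on $\H(1,1)$ by a homeomorphism, I may subtract the ``symmetric part'' of $\beta$ and assume $N=T_1'\#_I T_2'$ with $T_1'=\left(\begin{smallmatrix}1&s\\0&1\end{smallmatrix}\right)T$, $T_2'=\left(\begin{smallmatrix}1&-s\\0&1\end{smallmatrix}\right)T$, $s\neq0$, and $I$ a horizontal slit of length $w$. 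Real Rel lengthens the slit: $Rel_tN=T_1'\#_{I_t}T_2'$ with $I_t$ the horizontal segment of length $w+t$ from the same basepoint.

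\emph{Recurrence as simultaneous approximation.} Along the orbit the absolute periods are frozen; their imaginary parts are $b$ and $d$, each occurring once for $T_1'$ and once for $T_2'$, which is where $b/d$ enters. For $Rel_{t_i}N\to N$ the endpoint of the lengthening slit must return, after an admissible re-marking, close to its original position simultaneously in $T_1'$ and in $T_2'$, and the long slit $I_{t_i}$ must be ``unwound'' into a short saddle connection playing the role of $I$. The unwinding replaces $I_{t_i}$ by the arc in its homotopy class from which a closed curve of holonomy $\approx(t_i,0)$ has been removed; in $T_2'$ such a curve is the straight closed geodesic of holonomy $m(a,b)+n(c,d)$ with $(m,n)$ an integer pair making $mb+nd$ small, so $t_i\approx ma+nc$, while $|m|,|n|$ (the number of times $I_{t_i}$ wraps) are $\asymp t_i$ and the residual error of the return is $\asymp|mb+nd|\asymp\|n(b/d)\|$. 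Because $s\neq0$ the ``return in $T_1'$'' and ``return in $T_2'$'' conditions are genuinely independent and cannot be met by a symmetric cancellation — the symmetric case $c_1=c_2$ is precisely the excluded locus $\beta\in\R\,\d y$, where $N\in\E$ — and the claim to establish is that $Rel_{t_i}N$ converges in $\H(1,1)$ only when the residual error decays faster than the reciprocal of the wrapping number, i.e. $|n|\cdot\|n(b/d)\|\to0$ along the relevant sequence; this last condition is exactly well-approximability of $b/d$.

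\emph{The two directions, and the main obstacle.} For the ``if'' part I would take convergents $p_i/q_i$ of $b/d$ with $q_i\|q_i(b/d)\|\to0$, choose the Rel times $t_i\to\infty$ so that in both $T_1'$ and $T_2'$ the slit endpoint returns within $o(1/t_i)$ of its start, and verify $Rel_{t_i}N\to N$ — either in period coordinates or by writing down an explicit near-isometry onto $N$ obtained by re-cutting $Rel_{t_i}N$ along the short saddle connection described above. For the ``only if'' part I would assume $b/d$ badly approximable, so $\|q(b/d)\|\ge c/q$ for all $q$, and deduce a uniform lower bound $\inf_{t\ge t_0}d_{\H(1,1)}(Rel_tN,N)>0$: since the wrapping number at Rel time $t$ is $\asymp t$, any unwinding of $I_t$ leaves a residual discrepancy $\gtrsim(1/t)\cdot t\asymp1$ between the slit length one needs and the lengths available from the period lattices of $T_1'$ and $T_2'$, so no subsequence of $Rel_tN$ can converge to $N$. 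The step I expect to be the main obstacle is exactly this coupling — wrapping number $\asymp$ Rel time, and the resulting factor-of-$t$ amplification of the approximation error — together with keeping track of which absolute cycles are carried by which torus after a large Rel time and of which re-markings are admissible; it is this amplification that pins the residual error to $q_i\|q_i(b/d)\|$ rather than merely $\|q_i(b/d)\|$ and so makes \emph{well} approximability, not mere irrationality, the correct condition.
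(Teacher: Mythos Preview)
Your proposal lands on the correct Diophantine quantity $q\lVert q\alpha\rVert$ and has a sensible architecture, but the mechanism you describe is not the one that drives the proof, and the only-if direction remains a heuristic. The paper does not track ``simultaneous return of the slit endpoint in $T_1'$ and $T_2'$''; in fact the untremored surface $Rel_tM$ stays in $\E$ and is \emph{always} recurrent along best-approximant times $t_j=2q_ja$, so endpoint return is never the obstruction. Instead the paper uses $Rel_t\,trem_\beta M = trem_\beta\,Rel_tM$ to factor through $\E$, and its central invariant is the \emph{area exchange}: when $Rel_tM=T_1\#_{I_t}T_2$ is re-represented with a short slit $I'$ in the same $\Z/2\Z$ relative homology class, the two copies of $T$ get shuffled, and the area of the smaller shuffled region is $\asymp q\lVert q\alpha\rVert$. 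Applying the tremor converts this shuffle into a period discrepancy: a vertical curve $\gamma_1'$ in the new $T_1$ has $\beta$-value $\tfrac{B_1}{\area(T)}c_1+\tfrac{B_2}{\area(T)}c_2$, which misses the target $c_1$ by $\tfrac{B_2}{\area(T)}|c_1-c_2|$. Thus recurrence is exactly $\min(B_1,B_2)\to 0$ along a subsequence, i.e.\ well-approximability. Your own phrase ``which absolute cycles are carried by which torus after a large Rel time'' is precisely this area-exchange phenomenon --- it is the missing key, not a side issue.

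For the only-if direction the paper supplies two ingredients your sketch lacks. First, a uniform lower bound on the area exchange when $\alpha$ is badly approximable, obtained by a careful case analysis on $N=\mathrm{length}(I_t)/a$ (even, odd, non-integer) that bounds each coloured region from below by a constant depending only on $c,c'$ from \Cref{item:equivalencesofbaddlyaprrox}; your ``$(1/t)\cdot t\asymp 1$'' is the right order of magnitude but not a proof over \emph{all} admissible re-slittings. Second, a compactness/continuity statement: if $trem_{\beta_i}M_i\to trem_\beta M$ with uniformly bounded total variation and constant signed mass, then $M_i\to M$ and $\beta_i\to\beta$ separately. These two together let one evaluate $\beta_i$ on a vertical curve and read off the contradiction from the bounded area exchange. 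Without the area-exchange computation, the ``amplification by wrapping number'' you invoke has no concrete carrier, and the argument does not close.
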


Tremors were introduced in \cite{2020tremors} to describe the action of the horocycle flow and can be considered as deformations, given transverse invariant measures of the horizontal foliation. The real Rel flow commutes with tremors and we use this property to exhibit different behaviors of the real Rel flow.

The real Rel foliation is a subfoliation of a larger foliation, called the Rel foliation. Other names for the Rel foliation that appear in the literature are the kernel foliation, the absolute period foliation and the isoperiodic foliation.
Since the present work focuses on real Rel trajectories rather than on the full Rel foliation, we would like to refer the reader to \cite{CDF,HoroDynamics,Ygouf2022,Hamenstadt,McMullenIsoperiodicPpr,McMullen2007II} and the references therein.
The topological and measure-theoretic properties of the Rel foliation and of the real Rel flow have been an active area of research in recent years. 
\textcolor{black}{%
For instance, Hooper and Weiss~\cite{HooperWeiss} showed that, for $g \geq 3$, there exists a surface of genus $g$ that has a divergent real Rel trajectory in $\H(g-1,g-1)$. That is, the trajectory leaves every compact set and never returns. We note that there are many divergent real Rel trajectories arising from horizontal saddle connections: in such examples, the trajectory is defined only for a finite time, after which singularities collide;  see the example before \Cref{thm:mainthm}. The novelty of~\cite{HooperWeiss} is that it gives a different mechanism for divergence, producing a trajectory that is defined for all positive times but is nevertheless divergent.\\
}
\textcolor{black}{%
On every connected component of a stratum of translation surfaces with more than one singularity, the real Rel flow preserves the natural Lebesgue measure, known as the \emph{Masur--Veech} measure. Thus, by the classical Poincar\'e recurrence theorem, almost every real Rel orbit is recurrent. This gives a measure-theoretic contrast with the divergent examples discussed above.\\
There are also results concerning dense and equidistributed real Rel orbits. Winsor~\cite{winsor2022dense} gives concrete examples of dense real Rel orbits in connected components of strata. More recently, in~\cite{chaika2023ergodic}, it was proved that the real Rel flow is mixing of all orders with respect to Masur--Veech measure, and in particular ergodic.
}
The complete classification of orbit closures of real Rel is still an open problem, but the present work shows that there are more flexible behaviors.
A consequence of \Cref{thm:mainthm}
is the following.

\begin{THM}
\label{thm:nongenbehav}
There are real Rel trajectories in $\H(1,1)$ that are non-divergent but not recurrent.
\end{THM}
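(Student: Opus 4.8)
The plan is to derive both statements from \Cref{thm:mainthm}, where the only real content is the non-divergence. First I would choose a torus $T=\R^2/\Lambda$ whose lattice has a generating pair $(a,b),(c,d)$ with $b/d$ \emph{badly} approximable — for instance $\Lambda=\Z(0,1)+\Z(1,\gamma)$ with $\gamma$ the golden mean, so that $b/d=\gamma$ has bounded partial quotients and hence is not well approximable. Over this fixed $T$ take a slit construction $M_0=T_1\#_I T_2$ with $I$ a horizontal embedded segment; then $M_0\in\E$ is horizontally aperiodic (no closed horizontal leaf, because $T$ is horizontally minimal) and non-minimal (the two sides of the slit are horizontal saddle connections), so it is of the type to which \Cref{thm:mainthm} applies, and one may pick a tremor $\beta\in\tremspace_{M_0}$ with $\beta\neq l\,\d y$ for all $l\in\R\setminus\{0\}$. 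Set $N=trem_\beta M_0$. Since $b/d$ is not well approximable, \Cref{thm:mainthm} gives that $N$ is \emph{not} recurrent. It remains to check that the trajectory $\{Rel_t N : t\ge 0\}$ is non-divergent — then, with non-recurrence, this is \Cref{thm:nongenbehav}, exhibiting behavior between the divergent orbits of \cite{HooperWeiss} and the dense (hence recurrent) orbits of \cite{winsor2022dense}.

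For the non-divergence I would argue as follows. Real Rel commutes with tremors, so $Rel_t N=trem_\beta(Rel_t M_0)$, and $Rel_t M_0$ is again a slit construction $T_1\#_{I_t}T_2$ over the \emph{same} torus $T$, with $I_t$ a horizontal segment of length $|I|+t$. All these surfaces have fixed area $2\,\area(T)$, so by the compactness criterion for strata it suffices to find $t_i\to\infty$ with $\inf_i\mathrm{sys}(Rel_{t_i}M_0)>0$, and then check that this lower bound is not destroyed by the fixed deformation $trem_\beta$. A short saddle connection of $T_1\#_{I_t}T_2$ either comes from a saddle connection or closed geodesic of $T$ — whose lengths are at least the fixed systole of $T$ — or is a short path on $T$ joining the two endpoints of $I_t$, which is short exactly when the vector $(|I|+t,0)$ lies near the lattice $\Lambda$. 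The set of $t$ with $(|I|+t,0)$ within a small $\delta$ of $\Lambda$ is a disjoint union of short intervals around a discrete sequence $t_k\to\infty$ whose gaps are governed by the continued fraction convergents of $b/d$; off these intervals the systole is bounded below in terms of $\min_k|t-t_k|$, and since $b/d$ is badly approximable the gaps $t_{k+1}-t_k$ do not shrink. Choosing $t_i$ near the midpoints of consecutive gaps yields the desired uniform systole bound, hence a convergent subsequence of $Rel_{t_i}M_0$, hence of $Rel_{t_i}N$, so the orbit of $N$ is non-divergent.

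The hard part will be the last geometric estimate, i.e.\ turning ``the slit length stays away from every near-period of the horizontal direction of $T$'' into a genuine lower bound on $\mathrm{sys}(T_1\#_{I_t}T_2)$. This amounts to tracking, via the renormalization (continued fraction) dynamics of the horizontally minimal torus $T$, which short saddle connections and short cylinders appear as the long slit wraps densely around $T$ and two nearly parallel strands approach each other, and confirming that these are precisely the configurations accounted for by the rational approximations of $b/d$; one also has to verify that the particular tremor $\beta$ — which can be taken of small total variation — changes systoles only within a controlled range along the sequence $(t_i)$, so that it cannot by itself pinch a curve. Since the behavior of $Rel_t(trem_\beta M_0)$ as $t\to\infty$ is already analyzed in the proof of \Cref{thm:mainthm} through the same Diophantine dichotomy for $b/d$, most of these estimates should be available there; combining them with the systole bound above proves \Cref{thm:nongenbehav}.
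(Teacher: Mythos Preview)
Your derivation of non-recurrence from \Cref{thm:mainthm} is exactly what the paper intends: it states \Cref{thm:nongenbehav} simply as ``a consequence of \Cref{thm:mainthm}'' and gives no separate argument.

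For non-divergence your systole approach is valid, but it is considerably heavier than what the paper's machinery already provides, and one step in your reasoning is confused. You attribute the fact that ``the gaps $t_{k+1}-t_k$ do not shrink'' to $b/d$ being badly approximable; in fact the gaps never shrink regardless of the Diophantine type, since the bad intervals sit near integer multiples of $a$ and have fixed width $2\delta$, so one can always choose $t_i=(k_i+\tfrac12)a-|I|$ and get $d_T(p_1,p_2)\ge a/2$. In particular, non-divergence holds for \emph{every} horizontally aperiodic non-minimal $M_0\in\E$, not only in the badly approximable case; the Diophantine condition enters only in the non-recurrence half. So the ``hard part'' you flag is not hard.

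More to the point, the paper's own tools give non-divergence without any systole computation. \Cref{lemma:RelSubsequece} shows directly that $Rel_{2q_ja}M_0\to M_0$ for any irrational $\alpha$. Writing $Rel_{2q_ja}\,trem_\beta M_0=trem_{\beta_j}(Rel_{2q_ja}M_0)$ with $|L|_{Rel_{2q_ja}M_0}(\beta_j)=|L|_{M_0}(\beta)$ constant, weak-$*$ compactness of transverse measures of bounded total variation lets you pass to a subsequence $\beta_{j}\to\beta'$, and continuity of the tremor map (\cite[Proposition~4.7]{2020tremors}, used in the paper's proof of \Cref{prop:convergence}) gives $Rel_{2q_ja}\,trem_\beta M_0\to trem_{\beta'}M_0$. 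This is the argument the paper has in mind; your systole route reaches the same conclusion but rebuilds from scratch something that \Cref{lemma:RelSubsequece} already hands you.
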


The organization of this paper is as follows. \Cref{sec:BigPrelim} contains the definitions and setting for the statement of \Cref{thm:mainthm}. 
In \Cref{sec:RelRec}, we state \Cref{prop:recurrencecase}, which by explicit arguments proves the {\it if} in \Cref{thm:mainthm}.
In \Cref{sec:proofoflemmaboundedarea}, we will prove the {\it only if} direction of \Cref{thm:mainthm} using \Cref{thm:NonRecurrenceofRel}.

\section*{Acknowledgements}
The author would like to thank Jon Chaika for many helpful conversations and comments throughout the development of this work. The author also thanks the anonymous referee for their thoughtful comments which improved the manuscript substantially.

\section{Preliminaries}
\label{sec:BigPrelim}
In this section, we set up the basic definitions and notation used throughout the paper. These definitions can be found in \cite{HoroDynamics,2020tremors}; for other helpful references, see \cite{Zorich,MasurTabachnikov,claynotes} and the references therein. Although the main results of this paper concern the stratum $\H(1,1)$ and, more specifically, the locus $\E\subset\H(1,1)$, we begin by recalling some general background on translation surfaces, strata, real Rel, and tremors. We will indicate explicitly when the discussion specializes.

\subsection{Background on translation surfaces}
\textcolor{black}{%
A \emph{translation surface} $M$ is a compact, oriented surface obtained from a finite collection of polygons $P_1,\dots,P_m$ by identifying pairs of edges by translations. More precisely,
\[
M=\bigcup_{i=1}^m P_i \big\slash\sim.
\]
We orient each polygon counterclockwise. The equivalence relation $\sim$ identifies each edge $e_i$ with a unique edge $e_j$ that is parallel, has the same length and has the opposite orientation induced by the counterclockwise orientations of the polygons.\\ 
After embedding the polygons in $\mathbb{C}$, there is a translation $z\mapsto z+c_{ij}$ sending the edge $e_i$ to the edge $e_j$. Thus, for $z\in e_i$ and $z'\in e_j$, we have
\[
z\sim z'
\quad\text{if and only if}\quad
z'=z+c_{ij}.
\]
Let $\Sigma:=\Sigma_M\subset M$ be the set of equivalence classes of vertices of the polygons. The total angle around each point of $\Sigma$ is an integer multiple of $2\pi$. Thus, if $p_i\in\Sigma$, the cone angle at $p_i$ is of the form
\[
2\pi(r_i+1)
\]
for some integer $r_i\geq 0$. Points with $r_i>0$ are called \emph{singularities}, while points with $r_i=0$ are called \emph{marked points}. If $g$ is the genus of $M$ and $k=|\Sigma|$, then
\[
\sum_{i=1}^k r_i = 2g-2.
\]
}

The polygons defining a translation surface induce a family of charts on $M$, whose transition maps are translations in $\C$. 
\textcolor{black}{%
We refer the reader to \cite[Chapter~2]{MasurAthreya} for a formal description. 
A translation atlas is a tuple
\[
\bigl(X,\{U_\alpha\}_{\alpha\in\mathcal A} \cup \{V_p\}_{p \in \Sigma},
\{\varphi_\alpha\}_{\alpha\in\mathcal A}\cup \{\psi_p\}_{p \in \Sigma}\bigr),
\]
where $X$ is a compact topological surface, $\Sigma \subset X$ is a finite set, $\{U_\alpha\}_{\alpha\in\mathcal A}$ is an open cover of $X\setminus\Sigma$, and 
$\varphi_\alpha:U_\alpha\to\mathbb C$ 
are charts such that the transition maps $\varphi_\beta\circ\varphi_\alpha^{-1}$ 
are restrictions of translations $z\mapsto z+c_{\alpha\beta}$. These charts are called regular charts.
The set $V_p \subset X$ is a neighborhood of $p\in\Sigma$, the local structure is modeled on a branched cover $\psi_p:V_p \to \mathbb{D}_{\rho_p} $ of a small disk $\mathbb{D}_{\rho_p}\subset \mathbb{C}$. The degree of the branched cover is $r_p+1$. The transition maps between regular and singular charts are also restrictions of translations.
Therefore, a translation surfaces $M$ is an equivalence class of translation atlases. The equivalence relation is defined via homeomorphisms of the underlying topological surface which  preserve the translation structure. Conversely, an equivalent class of translation atlas admits a polygonal decomposition. This follows by the Delaunay polygonal decomposition, see \cite[Section 3.6]{MasurAthreya}.
}

% The translation charts determine a holomorphic one-form on $X\setminus\Sigma$ by pulling back the form $dz$ on $\mathbb C$. 
% \textcolor{black}{% 
% This one-form extends holomorphically to $X$. Thus, equivalently, a translation surface can be described as a pair
% \[
% M=(X,\omega),
% \]
% where $X$ is a compact Riemann surface and $\omega$ is a nonzero holomorphic one-form. The points of $\Sigma$ are the zeros of $\omega$, together with any marked points. If $p_i\in\Sigma$ has cone angle $2\pi(r_i+1)$, then $r_i$ is the order of the zero of $\omega$ at $p_i$; when $r_i=0$, the point $p_i$ is a marked point.
% }

\subsection{Horizontal flow}
\label{sec:HorFlow}

The equivalence class of translation atlases determines singular foliations on $M$. 
\textcolor{black}{%
Indeed, in a neighborhood of any point $x\in M\setminus\Sigma$, the transition maps of a translation atlas sends oriented straight segments to straight segments and preserve both orientation and lengths. Hence, for each $\theta\in [0,2\pi)$, the straight-line foliation of $\mathbb C$ in direction $\theta$ pulls back to a foliation on $M\setminus\Sigma$. Its leaves are locally the preimages, under translation charts, of straight lines in $\mathbb C$ in direction $\theta$.
This foliation extends to a singular foliation on $M$. If $p\in\Sigma$ has cone angle $2\pi(r_p+1)$, then, in a sufficiently small neighborhood of $p$, the foliation in direction $\theta$ has $r_p+1$ distinguished local branches meeting at $p$. These local branches are called the \emph{prongs} of the foliation at $p$.
We identify $\mathbb C$ with $\mathbb R^2$ in the canonical way. We will mainly consider two foliations on $M$: the \emph{horizontal foliation}, corresponding to the direction $\theta=0$, and the \emph{vertical foliation}, corresponding to the direction $\theta=\pi/2$. Thus, there is a well defined notion of moving up or down, left or right at every $x \in M \setminus \Sigma$.
Denote by $\d x$ and $\d y$ the real $1$-forms induced on $M$ by the standard coordinate forms on $\R^2$. Thus, $\d x$ measures horizontal displacement and $\d y$ measures vertical displacement. The vertical foliation on $M$ is defined by the equation $\d x=0$, while the horizontal foliation is defined by $\d y=0$.
}

The charts define a canonical measure $\Leb$ on $M$: a set $A \subset M \cap U_\alpha$ is measurable if and only if $\varphi_\alpha(A) \subset \R^2$ is measurable, and $\Leb(A)$ is equal to the Lebesgue measure of $\varphi_\alpha(A)$.

Let $\mathrm{U} \subset M$ be the union of the leaves of the horizontal foliation that do not contain singularities. We denote by $\FF^t: \mathrm{U} \to \mathrm{U}$ the horizontal flow; that is, $\FF^t$ sends $x\in \mathrm{U}$ to the point $\FF^t(x)$ on the same horizontal leaf at distance $|t|$ to the right or left of $x$, depending on the sign $\pm1$ of $t$.

We say that the horizontal flow is \emph{minimal} if all $\FF$-trajectories are dense. We say that $M$ is \emph{horizontally minimal} if $\FF$ is minimal. If a leaf $\gamma$ of the foliation in direction $\theta$ starts and ends at singularities, possibly distinct, and contains no singularities in its interior, then the path it determines is called a \emph{saddle connection}. The integral
\[
\int_\gamma \omega \in \R^2
\]
is called the \emph{holonomy} vector of $\gamma$.

\subsection{Strata and period coordinates}
\label{sec:stratum}

Let $M_1$ and $M_2$ be translation surfaces of type $(r_1,\dots,r_k)$. A \emph{translation equivalence} is a smooth orientation-preserving homeomorphism 
\[
\phi:M_1\to M_2
\]
that maps singularities to singularities and whose derivative is the identity matrix. 
Fix $g$, $k$ and $\{r_1,\dots,r_k\}$ with $\sum r_i = 2g-2$. We denote by $\H(r_1,\dots,r_k)$ the \emph{stratum}  of translation surfaces up to translation equivalence where the singularities are of type $(r_1,\dots,r_k)$. 

This work will consider two strata: $\H(0)$ and $\H(1,1)$. \textcolor{black}{%
These parametrize genus one translation surfaces with one marked point and genus two translation surfaces with two different singularities of cone angle $4\pi$, respectively. 
} 

Let $S$ be an oriented surface of genus $g$, and let $\Sigma$ be a set of $k$ distinct points $\xi_1,\dots,\xi_k$.
A \emph{marked translation surface} is a pair $(f,M)$, where $M$ is a translation surface of type $(r_1,\dots,r_k)$ and $f:S\to M$ is an orientation-preserving homeomorphism such that the marked points $\Sigma\subset S$ are mapped to the singularities of $M$.
Two marked translation surfaces $(f_1,M_1)$ and $(f_2,M_2)$ are equivalent if there exists a translation equivalence $\phi:M_1 \to M_2$ and the maps $\phi \circ f_1$ and $f_2$ are isotopic relative to the marked points $\Sigma$.
We denote the set of equivalence classes of marked translation surfaces of type $(r_1,\dots,r_k)$ by $\H_m(r_1,\dots,r_k)$ and call it \emph{marked stratum.} 

Recall that in \Cref{sec:HorFlow}, a translation surface $M=(X,\omega) \in \H,$  determines two differential forms $\d x$ and $\d y$. These determine a cohomology class $\hol_M \in H^1(X,\Sigma;\R^2)$ by
\begin{equation*}
\hol_M(\gamma) = (\holx(\gamma),\holy(\gamma)),
\end{equation*}
where $\holx(\gamma)= \int_\gamma \d x$ and $\holy(\gamma)= \int_\gamma \d y$. Note that when we write $\holx_M$ instead of $\holx$, we are emphasizing that the cohomology class is determined by the translation surface $M$. We can do the same for $\holy_M$ instead of $\holy$.\\ 
This determines the map 
\begin{equation}
\begin{split}
\dev:\,
&
\H_m \to H^1(S,\Sigma;\R^2)\\
&
[(f,M)] 
\mapsto
f^*(\hol_M)
\end{split}
\end{equation}
such that, for every $\gamma\in H_1(S,\Sigma;\Z)$,
\[
    f^*(\hol_M)(\gamma)=\hol_M(f_*(\gamma)).
\]
We denote $\hol_{(f,M)}\dfn \hol_M(f_*(\gamma)),$ likewise $\holx_{(f,M)}$ and $\holy_{(f,M)}$.
\textcolor{black}{%
This map is a local homeomorphism and it provides the structure of an affine manifold to $\H_m$, see \cite[Proposition 2.1]{HoroDynamics}.
}

\subsubsection{Orbifold structure of $\H$}
The pure mapping class group of $(S,\Sigma)$ denoted by $\operatorname{Mod}(S,\Sigma)$ is the group of isotopy classes of orientation-preserving homeomorphisms of $S$ that fix $\Sigma$ pointwise.
Given $h \in \operatorname{Mod}(S,\Sigma)$ define the action on $\H_m$ by  $h[(f,M)] \dfn [(f\circ h^{-1},M)]$. 
The action is transitive on the classes of marking maps of a given translation surface $M$. 
\textcolor{black}{Hence, we obtain the identification of the strata $\H$ with the quotient
$\H_m/\operatorname{Mod}(S,\Sigma)$. Denote by $\varpi:\H_m \to \H$ the quotient map. }\\
Consider a polygonal representation of a translation surface $M$. By adding edges between vertices of the polygons, we obtain a triangulation whose edges are geodesic segments and whose set of vertices is $\Sigma$.
Let $f$ be a marking of $M$, and let $\tau$ be the triangulation of $S$ obtained as the preimage under $f$ of a geodesic triangulation of $M$. The map $\dev$ assigns to $[(f,M)]$ the cohomology class $f^*(\hol_{M})$. In particular, this class assigns to the edges of $\tau$ nonzero vectors in $\R^2$ and maps the nondegenerate triangles of $\tau$ to nondegenerate triangles in $\R^2$.\\
Let $U_\tau\subset H^1(S,\Sigma;\R^2)$ be the set of cohomology classes that map the nondegenerate triangles of $\tau$ to nondegenerate triangles in $\R^2$ with the same orientations as those determined by $f^*({\hol}_M)$. Then $U_\tau$ is open.\\
Following \cite[Section~2.2]{2020tremors}, define a map
\[
\Psi_\tau:U_\tau \to \H_m,
\qquad
\beta \longmapsto 
[(\operatorname{Aff}_\beta\circ f,M_\beta)]
\]
where $M_\beta$ is the translation surface obtained by gluing the triangles determined by $\beta$. The map  $\operatorname{Aff}_\beta:M\to M_\beta$ is the piecewise affine map that sends each triangle of $M$ to the corresponding triangle in $M_\beta$.
Denote  $V_\tau=\Psi_\tau(U_\tau)$. By construction, the map
\[
\Phi_\tau:V_\tau\to U_\tau,
\qquad
\Phi_\tau([(g,N)])=\dev([g,N]),
\]
is a local inverse. We endow $\mathcal H_m$ with the topology induced by the charts $\{(V_\tau,\Phi_\tau)\}$. 
\textcolor{black}{
The transition maps of the charts $\{(V_\tau,\Phi_\tau)\}$ are restrictions of affine maps on $H^1(S,\Sigma;\R^2)$. Therefore, the tangent bundle $T(\H_m)$ is of the form $\H_m \times H^1(S,\Sigma;\R^2)$.}

Denote by $\Gamma_M$ the group of translation equivalences of $M$, and let $\mathcal{G}_{(f,M)}$ be the subgroup of $\operatorname{Mod}(S,\Sigma)$ consisting of the mapping classes of maps of the form
\[
f^{-1} \circ g \circ f,
\qquad 
g \in \Gamma_M.
\]
The family of charts
\[
\{
(U_\tau,\mathcal{G}_{(f,M)},\varpi \circ \Psi_\tau)\}
\]
defines an orbifold structure on $\H$. Each orbifold chart consists of the open set $U_\tau$ and the map
\[
\varpi\circ\Psi_\tau:U_\tau\to\mathcal H,
\]
which is $\mathcal G_{(f,M)}$-invariant. After replacing $U_\tau$ by a sufficiently small open subset $U'_\tau\subset U_\tau$, the induced map
\[
U'_\tau/\mathcal G_{(f,M)}\to\mathcal H
\]
is injective. 
We will denote the tangent space of $M \in \H$ by $T_M\H$. The real dimension of $\H_m$ and $\H$ is $2(2g+k-1)$. For more details see \cite[Section~2.3]{2020tremors}.

\subsection{The \texorpdfstring{$\operatorname{SL}(2,\R)$}{} action on strata}

Denote by $G=\operatorname{SL}
(2,\R)$,
and the subgroups $U = \{u_s\}_{s\in \R}$
and $A=\{g_s\}_{s \in \R}$ with 
\begin{equation*}
u_s = \begin{pmatrix} 1&s\\0&1\end{pmatrix}
\qquad \text{and} \qquad 
g_s = \begin{pmatrix} e^{s}&0\\0&e^{-s}\end{pmatrix}.
\end{equation*}
From now on, we assume that $G$ acts on $\R^2$ by linear maps.
Suppose $M$ is a translation surface constructed with the polygons $P_1,\dots, P_m$. For any $g \in G$, the translation surface $gM$ is defined by the polygons $gP_1,\dots,gP_m$ with the edges $ge_i$ and $ge_j$ glued whenever the edges $e_i$ and $e_j$ were glued in the construction of $M$.
The $G$-action is equivariant with respect to the holonomy map, \ie  
\begin{equation*}
\hol_{gM}=g\hol_{M}.
\end{equation*}
Analogously, there is a $G$-action on the marked stratum $\H_m$ that is $\varpi$-equivariant. We omit the details and refer the reader to \cite[Section 2.4]{2020tremors}.

\subsection{Rel foliation and real Rel}
\label{sec:Rel}
Following \cite{HoroDynamics},  consider the exact sequence 
\begin{equation*}
0
\to
H^0(X;\R^2)
\to
H^0(\Sigma;\R^2)
\to
H^1(X,\Sigma;\R^2)
\to
H^1(X;\R^2)
\to 
0,
\end{equation*}
where $Res:H^1(X,\Sigma;\R^2)
\to 
H^1(X;\R^2)$ is the restriction map since there is a natural inclusion $H_1(X;\R^2) \hookrightarrow H_1(X,\Sigma;\R^2)$. 
The kernel of $Res$ can be identified with $\mathfrak{R} = H^0(\Sigma)/H^0(X)$ and the vector space $H^1(X,\Sigma;\R^2)$ can be foliated by translates of $\mathfrak{R}$. 
Moreover, this foliation determines a foliation of $\H$, called \emph{Rel foliation.}
Since the dimension of $\mathfrak{R}$ determines the dimension of the leaves of the Rel foliation, their real dimension is $2(k-1).$ 
For example in $\H(1,1)$, since $k=2$ the leaves have real dimension 2.
Intuitively, two nearby translation surfaces are in the same leaf if the
holonomy of their simple closed geodesics is the same and the holonomy of saddle connections differs only in the horizontal component.
\textcolor{black}{
Consider the splitting
\begin{equation}
H^1(S,\Sigma;\R^2)=H^1(S,\Sigma;\R_x) \oplus H^1(S,\Sigma;\R_y)  
\label{eq:splitting}
\end{equation}
from writing $\R^2 = \R_x\oplus\R_y$.}
The splitting in \Cref{eq:splitting} determines the splitting of $\mathfrak{R}$  into $\mathfrak{R}_x\oplus\mathfrak{R}_y$ and similarly to the definition of the Rel foliation, we obtain two different sub-foliations of $\H$. 
The sub-foliation determined by $\mathfrak{R}_x$ is called \emph{real Rel} foliation and the foliation determined by $\mathfrak{R}_y$ is called the \emph{imaginary Rel} foliation. Both foliations have real dimension $k-1$. 
In the case of stratum $\H(r_1,r_2)$, the real Rel foliation determines a flow, the \emph{real Rel flow.} 
We denote this flow by $\Rel_t$. 

Given $M \in \H(1,1)$, the real Rel flow $t \mapsto \Rel_tM$ is defined for all $ t \in I_M \subset \R$, where $I_M$ is an open interval containing 0. Extending the flow to a larger interval or even $\R$ is related to potential collisions of singularities. For a formal explanation of this issue, see \cite[Theorem 6.1]{HoroDynamics}.
The following summarizes important properties of the real Rel flow.
\begin{PROP}
\label{thm:RelProperties}
Let $\H=\H(1,1)$ and $M\in \H$. Let $I \subset \R$ be an open interval containing 0. Then:
\begin{PROPenum}
\item \label{item:RelGequivariance} If $\Rel_tM$ is defined for all $t\in I$, then for  all $t \in I,$ and all $s \in \R$,
\begin{equation*}
g_s\Rel_t M = \Rel_{e^st}g_sM \quad \text{and} \quad 
u_s\Rel_tM = \Rel_t u_sM.
\end{equation*}

\item \label{item:flowproperty} If $\Rel_{s+t}M$, $\Rel_sM$, and $\Rel_tM$ are defined, then 
\begin{equation*}
\Rel_{s+t}M = \Rel_s\Rel_tM =\Rel_t\Rel_sM.
\end{equation*}

\item \label{item:RelCondition}
\textcolor{black}{
Let $\Lambda_M \subset \R^2$ be the set
\begin{equation*}
\Lambda_M = \{ \hol_M(\gamma): \gamma \text{ is a saddle connection from $\zeta_2$ to $\zeta_1$}\},
\end{equation*} 
then $\Rel_tM$ is well defined if and only if for all $v \in \Lambda_M$ and $0<x<t$ (or $ t<x<0$), 
\begin{equation*}
v-(x,0) \not = (0,0).
\end{equation*}
}
\end{PROPenum}
\end{PROP}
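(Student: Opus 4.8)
The plan is to derive all three parts from the cohomological description of real Rel provided by the exact sequence in \Cref{sec:Rel}. In $\H=\H(1,1)$ the space $\mathfrak{R}_x$ is one–dimensional, hence spanned by a single class $\delta\in H^1(S,\Sigma;\R_x)\subset H^1(S,\Sigma;\R^2)$, which we normalize so that $\delta(\gamma)=(1,0)$ for every relative cycle $\gamma$ from $\zeta_2$ to $\zeta_1$ (this is consistent because any two such $\gamma$ differ by an absolute class, on which $\delta$ vanishes). The defining property of the flow is that, whenever $Rel_tM$ is defined, $\textrm{dev}(Rel_tM)=\textrm{dev}(M)-t\,\delta$; equivalently $hol_{Rel_tM}(\gamma)=hol_M(\gamma)-(t,0)$ for $\gamma$ from $\zeta_2$ to $\zeta_1$, while $hol_{Rel_tM}=hol_M$ on absolute classes (since $\delta$ restricts to $0$ in $H^1(S;\R^2)$). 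Because $\textrm{dev}$ is a local homeomorphism and is $G$–equivariant, every asserted identity of surfaces reduces to the corresponding identity of cohomology classes, once one also checks that the two sides are defined for the same parameters.

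Part (ii) is then immediate: if $Rel_{s+t}M$, $Rel_sM$ and $Rel_tM$ are all defined, then
\[
\textrm{dev}\big(Rel_s(Rel_tM)\big)=\textrm{dev}(Rel_tM)-s\,\delta=\textrm{dev}(M)-(s+t)\,\delta=\textrm{dev}(Rel_{s+t}M),
\]
so $Rel_sRel_tM=Rel_{s+t}M$, and symmetry in $s,t$ gives the remaining equality. For part (i), recall that $G$ acts on $H^1(S,\Sigma;\R^2)$ through the $\R^2$–coefficients, and that $\delta$ is valued in the horizontal line $\R_x=\R\times\{0\}$; since $u_s$ fixes every $(c,0)$ and $g_s$ sends $(c,0)\mapsto(e^sc,0)$, we get $u_s\cdot\delta=\delta$ and $g_s\cdot\delta=e^s\delta$. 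Hence $\textrm{dev}(u_sRel_tM)=u_s\cdot\textrm{dev}(M)-t\,\delta=\textrm{dev}(Rel_tu_sM)$ and $\textrm{dev}(g_sRel_tM)=g_s\cdot\textrm{dev}(M)-e^st\,\delta=\textrm{dev}(Rel_{e^st}g_sM)$. That the flows on the right are defined exactly on the claimed ranges follows from part (iii): $\Lambda_{u_sM}=u_s\Lambda_M$ and $\Lambda_{g_sM}=g_s\Lambda_M$, while $u_sv=(\sigma,0)\iff v=(\sigma,0)$ and $g_sv=(\sigma,0)\iff v=(e^{-s}\sigma,0)$, so the sets of forbidden parameters match (up to the rescaling $\sigma\mapsto e^{s}\sigma$ in the $g_s$ case).

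For part (iii) the point is to pin down the obstruction to extending the flow. By the formula above, real Rel preserves the area and all absolute periods; by \cite[Theorem 6.1]{HoroDynamics} the flow is defined on a maximal open interval, and it can fail to extend past a time $T$ only if $Rel_sM$ leaves $\H(1,1)$ as $s\to T$. With area fixed, leaving every compact set of the stratum forces the systole to $0$, and since the length of a saddle connection equals the norm of its holonomy, a saddle connection joining a singularity to itself (being homologically nontrivial, hence of constant nonzero holonomy along the flow) cannot shrink; so the only degeneration is the collision of $\zeta_1$ and $\zeta_2$ through a connection between them shrinking to length $0$. Fixing the class $\gamma$ of such a connection, stabilized for $s$ near $T$, the identity $hol_{Rel_sM}(\gamma)=hol_M(\gamma)-(s,0)\to 0$ forces $\holy_M(\gamma)=0$ and $\holx_M(\gamma)=T$, i.e.\ $(T,0)\in\Lambda_M$. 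Conversely, if $(\ell,0)\in\Lambda_M$ with $\ell\in(0,t)$, the corresponding horizontal connection has holonomy $(\ell-s,0)$ on $Rel_sM$ and degenerates at $s=\ell<t$, so $Rel_tM$ is not defined. Finally, if no $(s,0)$ with $s\in(0,t)$ lies in $\Lambda_M$, no degeneration can occur on $(0,t)$: a degenerating connection at some time $s_0\le t$ pulls back to the horizontal geodesic ray issuing from $\zeta_2$ on $M$, and this ray cannot reach a singularity at length $<s_0$ without producing an element $(s',0)\in\Lambda_M$ with $s'<s_0\le t$, a contradiction.

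The first two parts are thus routine once the cohomological dictionary is set up; the real content, and the step I expect to be the main obstacle, is the last part of (iii): showing that the \emph{only} way $Rel_sM$ can degenerate is a collision of the two zeros, and that such a collision is already visible on $M$ itself as an honest horizontal saddle connection between $\zeta_1$ and $\zeta_2$ of the right length, rather than one that appears only after flowing. This uses the rigidity of absolute periods under real Rel together with the standard compactness of the locus of fixed area and bounded-below systole inside a stratum; the delicate behaviour at the endpoint $s=T$ is exactly where one appeals to \cite[Theorem 6.1]{HoroDynamics}.
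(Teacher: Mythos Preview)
The paper does not actually prove this proposition: it simply records that \Cref{item:RelGequivariance} follows from \cite[Proposition 4.4]{HoroDynamics} together with the fact that $AU$ preserves horizontal directions, that \Cref{item:flowproperty} is \cite[Proposition 4.5]{HoroDynamics}, and that \Cref{item:RelCondition} is \cite[Corollary 6.2]{HoroDynamics}. Your proposal therefore goes further than the paper by sketching the underlying arguments rather than just citing them.

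For parts (i) and (ii) your cohomological derivation is correct and is essentially the argument behind the cited propositions in \cite{HoroDynamics}: once one knows $\textrm{dev}$ is a $G$-equivariant local homeomorphism and that real Rel is translation by the one-dimensional subspace $\mathfrak{R}_x$, both identities are linear-algebra facts about how $g_s$ and $u_s$ act on $\R_x\times\{0\}$. One small wording issue: the reason a saddle connection from $\zeta_i$ to itself has constant holonomy along the flow is not that it is ``homologically nontrivial'' but that its boundary in $H_0(\Sigma)$ vanishes, so it lies in the image of absolute homology and $\delta$ evaluates to zero on it; its holonomy is nonzero simply because a saddle connection has positive length.

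For part (iii) your outline captures the right mechanism (only collisions of $\zeta_1$ and $\zeta_2$ can obstruct the flow, and these are detected by horizontal saddle connections on $M$), but two steps are not fully justified as written. First, the ``pullback'' claim in your final paragraph---that a saddle connection degenerating on $Rel_{s_0}M$ corresponds to an honest horizontal saddle connection on $M$ of length at most $s_0$---is exactly the nontrivial content of \cite[Corollary 6.2]{HoroDynamics}; saddle connections on $Rel_sM$ need not correspond to saddle connections on $M$, and one really needs the explicit geometric description of the flow (moving $\zeta_2$ along horizontal leaves) to see that the first collision time equals the length of the shortest horizontal connection from $\zeta_2$ to $\zeta_1$ on $M$ itself. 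Second, the converse direction requires knowing that a horizontal saddle connection from $\zeta_2$ to $\zeta_1$ persists (with length decreasing linearly) until it collapses, which again comes from that geometric description. Since you already defer the endpoint behaviour to \cite[Theorem 6.1]{HoroDynamics}, it would be cleaner---and in line with the paper---to cite \cite[Corollary 6.2]{HoroDynamics} directly for (iii) rather than give a partial argument.
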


\Cref{item:RelGequivariance} follows from  \cite[Proposition 4.4]{HoroDynamics} and that the subgroup $AU$ leaves horizontal directions invariant.
\Cref{item:flowproperty} was proven in \cite[Proposition 4.5]{HoroDynamics}.  \Cref{item:RelCondition} follows from \cite[Corollary 6.2]{HoroDynamics}.

\subsection{Tremors}

\label{sec:tremors}

%\subsubsection{Basic definitions and transverse measures}

We refer \cite[Section 2.5]{2020tremors} for the following definitions.
A \emph{transverse measure} $\nu$ on a translation surface $M$ is a family $\{\nu_\gamma\}$ of Borel measures indexed by transverse curves to the horizontal foliation $\gamma$. Each measure $\nu_\gamma$ is finite, non-negative and supported on $\gamma$. These measures are invariant by isotopies preserving the horizontal foliation and if $\gamma'\subset \gamma$ 
then $\nu_{\gamma'}$ is the  restriction of $\nu_\gamma$ to $\gamma'.$  
We will consider only non-atomic transverse measures. 

A \emph{signed transverse measure} $\nu$ is a family of Borel measures $\{\nu_\gamma\}$ that satisfy the same properties of a transverse measure although $\nu_\gamma$ can assume negative values. By Hahn decomposition, we can uniquely represent $\nu_\gamma$ as the difference of two non-negative measures: $\nu_\gamma^+ - \nu_\gamma^-$. Thus we write $\nu = \nu^+ - \nu^-$. We say $\nu$ is non-atomic if $\nu^+$ and $\nu^-$ are non-atomic. 
In \cite[Section 2.3]{2020tremors}  it was shown that a (signed) transverse measure $\nu$ determines a cohomology class $\beta_\nu \in H^1(M,\Sigma;\R)$. We call a cohomology class a \emph{(signed) foliation cocycle} if it is determined in this way.

Let $M\in \H$ and $\d y$ be the differential form in \Cref{sec:stratum}. $\d y$ defines the transverse measure $\gamma \mapsto \int_\gamma \d y$. To simplify the notation, we write $\d y$ for this transverse measure and $\holy_M$ for the corresponding foliation cocycle.

We denote by $\tremspace_M \subset H^1(M,\Sigma;\R)$ the space of signed foliation cocycles of $M$  and $C^+_M \subset \tremspace_M$  the cone of foliation cocycles. Identifying $\R$ with $\R_x$, we can see $\tremspace_M$ is a subspace of
\begin{equation*}
H^1(M,\Sigma;\R^2) = H^1(M,\Sigma;\R_x) \oplus H^1(M,\Sigma;\R_y).
\end{equation*}

The following results relate transverse and invariant measures of the horizontal flow. See \cite[Proposition 2.3]{2020tremors}.
\begin{PROP}
\label{prop:InvMeasures}
For each non-atomic transverse measure $\nu$ there exists an $\FF$-invariant measure $\mu_\nu$ with $$\mu_\nu(A)=\nu(v)l(h) $$ for every isometrically embedded rectangle $A$ with one horizontal side $h$ and another side $v$ orthogonal to the horizontal direction, where $l$ is the Euclidean length. The map $\nu \mapsto \mu_\nu$ is a bijection between non-atomic transverse measures and $\FF$-invariant measures that assign zero measure to horizontal leaves. It extends to a bijection between non-atomic signed transverse measures and $\FF$-invariant signed measures assigning zero measure to horizontal leaves.
\end{PROP}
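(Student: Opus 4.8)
To prove \Cref{prop:InvMeasures} the plan is to write down the two maps $\nu \mapsto \mu_\nu$ and $\mu \mapsto \nu$ explicitly from the local product structure of the horizontal foliation, check that they are mutually inverse, and then reduce the signed statement to the unsigned one via the Hahn decomposition.

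For the direction $\nu \mapsto \mu_\nu$, I would first set up flow boxes. Let $\Gamma\subset M$ be the union of $\Sigma$ with the horizontal separatrices issuing from $\Sigma$; this is a finite union of horizontal arcs, hence $\Leb$-null, and off $\Gamma$ the horizontal flow is everywhere defined. Cover $M\setminus\Gamma$ by open boxes $R$, each carrying a translation chart identifying $R$ with a product $h_R\times v_R\subset\R_x\times\R_y$ of a horizontal segment and a transverse one. On each box, set $\mu_\nu|_R$ to be the product measure $\Leb_{h_R}\otimes\nu_{v_R}$; on a subrectangle with horizontal side $h$ and transverse side $v$ this is $l(h)\,\nu(v)$, so, after subdividing a general embedded rectangle into box-sized pieces, this is forced to be the asserted formula. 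The point is that the local definitions agree on overlaps: transition maps of a translation surface are translations, hence preserve horizontal Lebesgue measure, and they carry transverse arcs to transverse arcs by isotopies preserving the horizontal foliation, under which $\nu$ is invariant by hypothesis. So the $\mu_\nu|_R$ glue to a Borel measure on $M\setminus\Gamma$, which we extend by zero across the $\Leb$-null, hence $\mu_\nu$-null, set $\Gamma$. Since $\FF^t$ acts in each box as the horizontal translation $(x,y)\mapsto(x+t,y)$, it preserves $\Leb_{h_R}\otimes\nu_{v_R}$ and therefore $\mu_\nu$; and since $\nu$ is non-atomic, each horizontal leaf — a countable union of arcs $h_R\times\{y_0\}$ — gets $\mu_\nu$-mass $\sum l(h_R)\,\nu_{v_R}(\{y_0\})=0$.

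Conversely, given an $\FF$-invariant Borel measure $\mu$ that vanishes on horizontal leaves, I would recover $\nu$ by disintegrating along the flow direction. In a flow box $R=h_R\times v_R$ the measure $\mu|_R$ is invariant under the local horizontal flow, and a short argument — the function $t\mapsto\mu([0,t]\times E)$ is additive by invariance and monotone, hence linear — shows $\mu|_R=\Leb_{h_R}\otimes\lambda_R$ for a unique finite measure $\lambda_R$ on $v_R$. Comparing two boxes lying over the same transverse arc shows the factor $\lambda_R$ is independent of the box, so it defines a transverse measure $\nu$ with $\mu(h\times E)=l(h)\,\nu(E)$. Finiteness and countable additivity of $\nu$ come from those of $\mu$, holonomy-invariance from the fact that $\mu$ is one globally defined measure, and non-atomicity from $\nu(\{y_0\})\cdot l(h_R)=\mu(h_R\times\{y_0\})=0$. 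A direct check shows the two constructions are inverse to each other: the transverse measure extracted from $\mu_\nu$ is $\nu$ by the product formula, and the measure built from $\mu$ agrees with $\mu$ on every flow box, hence on $M\setminus\Gamma$, and both give $\Gamma$ zero mass. For the signed case I would apply this to $\nu^+$ and $\nu^-$, respectively to the Hahn components $\mu^+$ and $\mu^-$ — which are again $\FF$-invariant and leaf-null, since $\FF^t$ preserving $\mu$ preserves its Hahn decomposition — and combine by linearity, consistently with the stated notion of non-atomicity.

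The step I expect to be the main obstacle is this globalization: showing that the local product measures are consistent on overlaps — equivalently, that the factor $\lambda_R$, or the slope of $t\mapsto\mu([0,t]\times E)$, does not depend on which box or rectangle one uses — which is exactly where holonomy-invariance of $\nu$ and $\FF$-invariance of $\mu$ enter. One must also take a little care to choose a countable family of boxes covering $M\setminus\Gamma$ with controlled overlaps and to confirm that $\Gamma$ is negligible for every measure in sight, so that passing to $M\setminus\Gamma$ loses no information.
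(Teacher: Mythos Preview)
The paper does not prove this proposition; it simply quotes it from \cite[Proposition 2.3]{2020tremors} and moves on. Your outline is the standard local--product--structure argument and is correct: build $\mu_\nu$ as $\Leb\otimes\nu$ in flow boxes, glue using holonomy invariance, and in the other direction disintegrate an $\FF$-invariant $\mu$ along the flow to recover the transverse factor; the signed case follows from Hahn decomposition. There is nothing to compare against here beyond noting that your sketch is exactly the kind of argument the cited reference carries out.
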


For $M \in \H$ and $\beta = \beta_\nu \in \tremspace_M$, let $\mu_\nu$ be the invariant measure in  $\Cref{prop:InvMeasures}.$ The \emph{signed mass} of $\beta$ is $L_M(\beta)\dfn\mu_\nu(M)$. In particular, if $\beta \in C^+_M$, then $L_M(\beta)\ge0$. Writing a signed transverse measure $\nu$ of the form $\nu^+-\nu^-$, then every $\beta \in \tremspace_M$ can be written  $\beta=\beta^+-\beta^-$ with $\beta^+,\beta^- \in C^+_M$. We denote the \emph{total variation} by $|L|_M(\beta) = L_M(\beta^+)+L_M(\beta^-).$ 
The signed mass $L_M(\beta)$ is also equal to evaluating the cup product $\holx_M \smile \beta$ on the fundamental class of $M$. This last observation implies that the map $(\beta,M) \mapsto L_M(\beta)$ is continuous.

Recall that for $M \in \H$, we denote the tangent space of $\H$ at $M$ by $T_M\H$. In \Cref{sec:stratum}, (and also in \cite[Section 2.2]{2020tremors}), $T_M\H$ is identified with $H^1(M,\Sigma;\R^2).$ Thus, we regard $\tremspace_M$ as a subspace of $T_M\H$.

The following summarizes the properties of the previous objects. Proofs can be found in \cite[Section 4]{2020tremors}.
\begin{PROP}The following hold:
\begin{PROPenum}
\item The set
\begin{equation*}
C^+_{\H,1}=\{(M,\beta):
\beta
\in C^+_M,L_M(\beta)=1\}
\end{equation*}
is a closed subset of $T\H$.
\item Suppose $M \in \H$  has area one and $\FF$ is uniquely ergodic. If $\{(M_n,\beta_n)\}$ is a sequence with $\beta_n \in C^+_{M_n}$ with $L_{M_n}(\beta_n)=1$ such that $M_n \to M$, then $\beta_n \to \holy_M$.
\end{PROPenum}
\end{PROP}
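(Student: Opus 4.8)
The plan is to derive both items from a single closedness statement: the \emph{unnormalized} cone $C^+_\H \dfn \{(M,\beta)\in T\H : \beta\in C^+_M\}$ is closed in $T\H$. Item (i) is then immediate, because $C^+_{\H,1}$ is the preimage of $1$ under the restriction to $C^+_\H$ of the map $(M',\beta')\mapsto L_{M'}(\beta')$, and this map is continuous since $L_{M'}(\beta')$ equals the evaluation of the cup product $\holx_{M'}\smile\beta'$ on the fundamental class of $M'$, as already remarked in the excerpt. So the first move is this reduction; the substance is the closedness of $C^+_\H$.

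To prove $C^+_\H$ is closed I would pass to invariant measures via \Cref{prop:InvMeasures}. Let $(M_n,\beta_n)\to(M,\beta)$ with $\beta_n=\beta_{\nu_n}\in C^+_{M_n}$, and let $\mu_n\dfn\mu_{\nu_n}$ be the associated non-negative $\FF$-invariant measure on $M_n$; its total mass $\mu_n(M_n)=L_{M_n}(\beta_n)$ is bounded because it converges to $L_M(\beta)$. Working inside a single period-coordinate chart $(U_\tau,\Phi_\tau)$ from \Cref{sec:stratum}, the surfaces $M_n$ and $M$ are all obtained by gluing Euclidean triangles modeled on a fixed triangulation $\tau$ of $S$, and the piecewise-affine comparison maps $\mathrm{Aff}$ between them converge uniformly away from $\Sigma$ to the identity with metric distortion $1+o(1)$. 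Transporting $\mu_n$ to $M$ by these maps and using weak-$\ast$ compactness of uniformly bounded measures, extract a subsequential limit $\mu$. One then checks that $\mu$ is $\FF$-invariant on $M$ (the horizontal flows converge and invariance is preserved under weak-$\ast$ limits) and that the foliation cocycle of $\mu$ is $\beta$: computing cocycles by the rectangle formula of \Cref{prop:InvMeasures}, the $\beta_n$-integrals over transverse arcs converge to the corresponding $\mu$-masses of rectangles, which pins down the cocycle of $\mu$ as $\lim\beta_n=\beta$. Hence $\beta\in C^+_M$.

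I expect the main obstacle to be verifying that the limit measure $\mu$ assigns zero mass to every horizontal leaf --- that none of the mass of the $\mu_n$ concentrates onto a horizontal saddle connection of the limit surface $M$ --- since this is exactly the hypothesis under which the bijection of \Cref{prop:InvMeasures} identifies $\mu$ with a non-atomic non-negative transverse measure. If $M$ has no horizontal saddle connections this is automatic; in general I would split $\mu$ along the finite union of horizontal saddle connections of $M$ and argue that the horizontal part must vanish, since a nonzero such part would alter the $\beta$-integral across it and contradict $\beta_n\to\beta$, invoking if needed the lower semicontinuity of mass established in \cite{2020tremors}. The delicate quantitative point is controlling how transverse arcs on $M$ are approximated by transverse arcs on the $M_n$ in a neighborhood of such a saddle connection.

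For item (ii) I would first note that $L_M$ is strictly positive on $C^+_M\setminus\{0\}$: a nonzero non-negative transverse measure $\nu$ has $\mu_\nu\neq 0$, so $L_M(\beta_\nu)=\mu_\nu(M)>0$. Together with the closedness of $C^+_\H$ and continuity of $L$, this forces the slices $\{\beta:\beta\in C^+_{M'},\,L_{M'}(\beta)=1\}$, for $M'$ near $M$, to be uniformly bounded: a sequence with $\|\beta_n\|\to\infty$ and $M_n\to M$ would, after rescaling to $\bar\beta_n=\beta_n/\|\beta_n\|$ and passing to a subsequence, give some $\bar\beta\in C^+_M$ with $\|\bar\beta\|=1$ and $L_M(\bar\beta)=\lim 1/\|\beta_n\|=0$, a contradiction. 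Hence $\{\beta_n\}$ is precompact, and by item (i) every subsequential limit $\beta_\infty$ satisfies $\beta_\infty\in C^+_M$ and $L_M(\beta_\infty)=1$. Since $\FF$ is uniquely ergodic on $M$, its only non-negative transverse measures are the multiples of $\d y$, and $L_M(\holy_M)=\Leb(M)=\area(M)=1$; therefore $\beta_\infty=\holy_M$. As all subsequential limits coincide, $\beta_n\to\holy_M$.
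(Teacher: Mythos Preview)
The paper does not give its own proof of this proposition: it simply states that ``Proofs can be found in \cite[Section 4]{2020tremors}.'' So there is no in-paper argument to compare against; the proposition is quoted as background.

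That said, your outline is the standard one and is essentially what is carried out in \cite{2020tremors}: reduce (i) to closedness of the full cone $C^+_\H$ plus continuity of $L$, and prove closedness by passing to invariant measures, using weak-$\ast$ compactness, and identifying the limiting cocycle. Your argument for (ii) --- positivity of $L_M$ on $C^+_M\setminus\{0\}$ to get precompactness, then item (i) plus unique ergodicity to pin down the limit --- is clean and correct.

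The one place where your sketch is thin is exactly the obstacle you flag: ruling out that weak-$\ast$ mass escapes onto horizontal saddle connections of $M$. Your proposed fix (``a nonzero such part would alter the $\beta$-integral across it and contradict $\beta_n\to\beta$'') is morally right but needs care, since an atomic contribution to the limiting transverse measure \emph{would} be recorded by the limiting cocycle $\beta$, and the task is precisely to show no such atom appears. In \cite{2020tremors} this is handled by working with the transverse measures directly on short transversals and using that each $\nu_n$ is non-atomic together with the convergence of holonomies; if you want a self-contained write-up, that is the step to make precise.
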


\subsubsection{Tremor map}
For ${\widehat M} \dfn [(f,M)] \in \H_m$, note that 
$C^+_{\widehat M} \dfn f^*(C^+_M)$ and 
$\tremspace_{\widehat M}\dfn f^*(\tremspace_M)$ 
are subspaces of $H^1(S,\Sigma;\R_x)$. 

For $\beta \in \tremspace_{\widehat M}$, we define $\trem_\beta {\widehat M} \dfn \theta(1) $ where $\theta(t)$ is the solution of the differential equation
$\frac{\d\theta}{\d t}(0) = \beta$ and $\theta(0) = {\widehat M}$. 
By unpacking the definitions, we obtain that, for every $\gamma \in H_1(S,\Sigma)$,
\begin{equation}
\label{eq:holtrem}
\begin{split}
\holx_{\theta(t)}(\gamma) 
& 
= \holx_{\widehat M}(\gamma) +t\beta(\gamma) \\
\holy_{\theta(t)}(\gamma)
& 
= \holy_{\widehat M}(\gamma).
\end{split}
\end{equation}
In \cite[Proposition 4.8]{2020tremors} it was shown that 
the solution to this differential 
equation has domain $\R$ for any non-atomic 
foliation cocycle $\beta$ and that the 
composition $\trem_\beta M \dfn \varpi(\trem_\beta {\widehat M})$ is well defined.
In particular, writing $\trem_{s\d y} M \dfn \trem_{s\holy}M$, we obtain that $u_s M = \trem_{s\holy_{M}}M$ by \Cref{eq:holtrem}. 

The following are properties of the tremor map and its interaction with other maps. See \cite[Section 6]{2020tremors} for more information.
\begin{PROP}
For every $M \in \H$, the following properties hold:
\begin{enumerate}
\item If $\beta_1,\beta_2 \in \tremspace_M$, then $\trem_{\beta_1 + \beta_2} M = \trem_{\beta_1}( \trem_{\beta_2}M)$, where $\beta_1 \in \tremspace_M$ can be identified with a unique foliation cocycle $\beta_1 \in \tremspace_{\trem_{\beta_2}M}.$
\item For any $s \in \R$ and $\beta \in \tremspace_M$,
\begin{equation*}
g_s (\trem_\beta M) = \trem_{e^{s}\beta} (g_sM) \qquad \text{and}\qquad u_s (\trem_{\beta} M)= \trem_{\beta} (u_sM).        
\end{equation*}
\item If $\Rel_t M$ is defined for $t \in \R$, then $\Rel_t \trem_\beta M = \trem_\beta \Rel_t M$.
\end{enumerate}
\end{PROP}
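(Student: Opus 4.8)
The plan is to check each of the three identities at the level of the developing map $\textrm{dev}\colon\H_m\to H^1(S,\Sigma;\R^2)$ and then push forward by $\varpi$. So I would first lift $M$ to a marked surface $\widehat M=[(f,M)]$. The identities involve three operations that all carry horizontal leaves to horizontal leaves: tremoring (a shear along the horizontal foliation), the $AU$-action, and $Rel_t$ (which slides a singularity along a horizontal leaf). Consequently each induces a canonical identification of $\tremspace_{\widehat M}$ with $\tremspace_{trem_\beta\widehat M}$, with $\tremspace_{g_s\widehat M}$, and with $\tremspace_{Rel_t\widehat M}$, all sitting inside the one group $H^1(S,\Sigma;\R_x)$ attached to the fixed topological model $(S,\Sigma)$; under the tremor and $Rel$ identifications $\holy$ is preserved, while under the $g_s$ identification a cocycle $\beta$ is sent to $e^{s}\beta$ — the normalization that the holonomy computation below forces. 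These identifications are exactly the ones referred to in the statement of (1). Because, by \eqref{eq:holtrem} together with \cite[Proposition 4.8]{2020tremors}, a marked surface is determined by its $\textrm{dev}$-image within the tremor orbit of $\widehat M$, it then suffices to match holonomies on every $\gamma\in H_1(S,\Sigma;\Z)$.

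For (1), applying \eqref{eq:holtrem} twice yields
\begin{equation*}
\holx_{trem_{\beta_1}(trem_{\beta_2}\widehat M)}(\gamma)
= \holx_{trem_{\beta_2}\widehat M}(\gamma)+\beta_1(\gamma)
= \holx_{\widehat M}(\gamma)+\beta_2(\gamma)+\beta_1(\gamma)
= \holx_{trem_{\beta_1+\beta_2}\widehat M}(\gamma),
\end{equation*}
while $\holy$ is unchanged on both sides; since both surfaces lie in the tremor orbit of $\widehat M$, they coincide, and projecting by $\varpi$ gives the identity in $\H$. For (2), the $g_s$ identity follows from the equivariance $hol_{gN}=g\,hol_N$ and \eqref{eq:holtrem}: both $g_s(trem_\beta\widehat M)$ and $trem_{e^{s}\beta}(g_s\widehat M)$ have holonomy $\bigl(e^{s}(\holx_{\widehat M}+\beta),\,e^{-s}\holy_{\widehat M}\bigr)$ on every $\gamma$. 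The $u_s$ identity I would then deduce formally from (1): since $u_s\widehat N=trem_{s\holy_{\widehat N}}\widehat N$ and $\holy$ is tremor-invariant,
\begin{equation*}
u_s(trem_\beta\widehat M)
= trem_{s\holy_{\widehat M}}(trem_\beta\widehat M)
= trem_{\beta+s\holy_{\widehat M}}\widehat M
= trem_\beta(trem_{s\holy_{\widehat M}}\widehat M)
= trem_\beta(u_s\widehat M).
\end{equation*}

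For (3), in the global period coordinates $T\H_m\cong\H_m\times H^1(S,\Sigma;\R^2)$ the flow $Rel_t$ acts as translation by $t\,r$ for a fixed $r\in\mathfrak{R}_x$, and $trem_\beta$ acts as translation by $\beta\in\tremspace_{\widehat M}\subset H^1(S,\Sigma;\R_x)$; translations commute, so $hol_{Rel_t trem_\beta\widehat M}=hol_{trem_\beta Rel_t\widehat M}$ whenever both sides are defined, and equality in $\H_m$ (hence in $\H$) follows as before. The step I expect to be the main obstacle is the domain issue: one must show that if $Rel_tM$ is defined for all $t\in\R$ then so is $Rel_t(trem_\beta M)$. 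By \Cref{item:RelCondition} this is equivalent to $trem_\beta M$ having no horizontal saddle connection joining its two singularities. Since tremoring fixes $\holy$ on every relative cycle, it cannot turn a non-horizontal period into a horizontal one, so no old saddle connection becomes horizontal; the real content is that it produces no genuinely new horizontal saddle connection, which I would argue from the fact that the marking transport $M\to trem_\beta M$ is a homeomorphism preserving $\Sigma$ and carrying horizontal leaves to horizontal leaves — a horizontal saddle connection of $trem_\beta M$ between the two singularities would correspond to one of $M$, contradicting the hypothesis. Granting this verification, (3) follows.
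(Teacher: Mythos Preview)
The paper does not give its own proof of this proposition; it simply refers the reader to \cite[Section 6]{2020tremors}. Your approach---lifting to $\H_m$, verifying each identity at the level of $\textrm{dev}$ via the holonomy formula \eqref{eq:holtrem}, and then projecting by $\varpi$---is the natural one and is essentially the argument of the cited reference. Parts (1) and (2) are correct as written; deriving the $u_s$-equivariance from (1) together with $u_s=trem_{s\holy}$ is clean.

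One small point on (3): the hypothesis ``$Rel_t M$ is defined for $t\in\R$'' should be read as \emph{for a given} $t$, not for all $t$, so your domain argument is slightly misaimed. Under the correct reading, $M$ may well have horizontal saddle connections from $\zeta_2$ to $\zeta_1$; what you need is that their holonomies are unchanged by tremoring, so that \Cref{item:RelCondition} gives the same obstruction set for $M$ and for $trem_\beta M$. This follows because $\beta$ arises from a non-atomic transverse measure and hence vanishes on any horizontal curve, so for a horizontal saddle connection $\gamma$ one has $\holx_{trem_\beta M}(\gamma)=\holx_M(\gamma)+\beta(\gamma)=\holx_M(\gamma)$. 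Combined with your observation that the tremor comparison homeomorphism bijects horizontal saddle connections, this shows $Rel_t(trem_\beta M)$ is defined exactly when $Rel_tM$ is, and the holonomy computation you gave then finishes the proof.
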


For the horizontally aperiodic, non-minimal surfaces in $\E$ considered in the main results, foliation cocycles admit a more concrete description in terms of the two torus components. This description will be used later in the proof of \Cref*{thm:mainthm}; see \Cref{eq:decomp_cocycle}.

\subsection{The locus \texorpdfstring{$\E$}{} and the slit construction}
\label{Section:Preliminaries}

We now specialize to the setting used in the main results. From this point on, our discussion focuses primarily on surfaces in $\H(1,1)$ which arise from gluing two identical tori along a slit.

\textcolor{black}{%
Let $T_1,T_2\in\mathcal H(0)$ be two flat tori. Let $I_i\subset T_i$ be an embedded segment starting at the marked point of $T_i$, for $i=1,2$. We orient each segment from the marked point to its other endpoint. Assume that $I_1$ and $I_2$ have the same length and direction.
The orientation of the surfaces and of the segments determines a left and a right side of each $I_i$. We form a new surface $M$ as a connected sum by cutting each torus along $I_i$ and identifying the left side of $I_1$ with the right side of $I_2$, and the right side of $I_1$ with the left side of $I_2$, by translations. The resulting topological surface has genus two. Since $I_1$ and $I_2$ have the same length and direction, we often suppress the subscripts and write simply
\[
M=T_1\#_I T_2.
\]
The image in $M$ of $I_1\cup I_2$ is called the \emph{slit}. The endpoints of $I_1$ and $I_2$ are identified in pairs, giving two distinguished points $\zeta_1,\zeta_2\in M$. We label them so that $\zeta_1$ is the point obtained from the marked points of $T_1$ and $T_2$, and $\zeta_2$ is the point obtained from the other endpoints of the segments.
The translation structures on $T_1$ and $T_2$ induce a translation atlas on $M$ away from $\zeta_1$ and $\zeta_2$. Away from the slit, we use the original charts on the tori. Around a point $x$ in the slit, such that $x \neq \zeta_1,\zeta_2$,
a neighborhood of $x$ is obtained by gluing two Euclidean half-disks of the same radius lying on opposite sides of $I_1$ and $I_2$. Since the corresponding charts on the two tori differ by a translation, and since the sides of the slits are identified by translations, these half-disk charts glue to give a well-defined translation chart near $x$.
Around each $\zeta_i$, the local model is obtained by taking small disks in $T_1$ and $T_2$ around the corresponding endpoints and gluing the opposite sides of the portions of $I_1$ and $I_2$ contained in these disks. This gives cone angle $4\pi$ at each $\zeta_i$ and 
\[
M\in\mathcal H(1,1).
\]
}
In \cite[Theorem 7.1]{McMullen2007DynamicsOS} it is proved that any surface $M \in \H(1,1)$ can be represented  by a slit construction $M=T_1 \# T_2$ with  tori $T_1,T_2 \in \H(0)$. 

A map $f:M \to N$ is a \emph{translation covering map} if
\[
f^{-1}(\Sigma_N)=\Sigma_M
\]
and
\[
f:M\setminus \Sigma_M \to N\setminus \Sigma_N
\]
is a covering map that preserves the translation structures. Let $\E\subset\H(1,1)$ be the collection of translation surfaces for which there exists a translation covering onto a surface in $\H(0,0)$.

Let $T_1$ and $T_2$ be identical copies of a torus $T\in\H(0)$, and let $I\subset T$ be a straight-line segment with one endpoint at the marked point. Denote by $p_1$ the marked point and by $p_2$ the other endpoint of $I$. We regard $T$, together with the distinguished points $p_1$ and $p_2$, as a translation surface in $\H(0,0)$.

The surface
\[
M=T\#_I T
\]
belongs to the locus $\E$. Indeed, let $\pi:M\to T$ be the map that sends $\zeta_i$ to $p_i$ and sends each point
\[
x\in M\setminus\{\zeta_1,\zeta_2\}
\]
to the corresponding point of $T$, after forgetting whether $x$ came from the first or second copy of $T$. Then $\pi$ is a translation covering map, $p_1$ and $p_2$ are the branch points, and $\zeta_1$ and $\zeta_2$ are the ramification points.

We have the following:

\begin{LEM}\label{lemma:Slits}
Let $M=T_1\#_{I}T_2 \in \E$ and $M'=T_1\#_{I'}T_2 \in \E$ be two translation surfaces with $T=T_1=T_2$ and the endpoints of $I$ and $I'$ be $p_1 \not=p_2\in T$. Then $M = M'$ if and only if $[I]=[I']$ in $H_1(T,\{p_1,p_2\};\Z/2\Z)$.
\end{LEM}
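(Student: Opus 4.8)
The plan is to reduce the identification "$M=M'$'' to a question about when the slit constructions yield translation-equivalent surfaces, and then to recognize that equivalence is governed precisely by the relative homology class of the slit mod $2$. Throughout, $T=T_1=T_2$ is fixed with marked points $p_1\neq p_2$, and $I,I'$ are straight segments with endpoints $\{p_1,p_2\}$. Since $M$ and $M'$ are obtained by slitting two copies of the same torus and regluing crosswise along the slit, the covering map $\pi\colon M\to T$ (resp.\ $\pi'\colon M'\to T$) from the construction is a degree-two translation cover branched over $\{p_1,p_2\}$; the first step is to observe that $M=M'$ as marked-point translation surfaces is equivalent to the existence of a translation equivalence $\phi\colon M\to M'$ commuting with the covers, i.e.\ $\pi'\circ\phi=\pi$, because any translation equivalence must send the deck involution to the deck involution and hence descend to $T$.

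Next I would translate the data of a double cover of $T$ branched over $\{p_1,p_2\}$ into cohomological/homological language. A connected double cover of $T\setminus\{p_1,p_2\}$ corresponds to an index-two subgroup of $\pi_1(T\setminus\{p_1,p_2\})$, equivalently to a nonzero element of $H^1(T\setminus\{p_1,p_2\};\Z/2\Z)$; the requirement that the cover be branched (ramified) exactly at both $p_1$ and $p_2$ pins down the behavior of the class on small loops around $p_i$. By Lefschetz/Poincaré–Lefschetz duality on the torus, $H^1(T\setminus\{p_1,p_2\};\Z/2\Z)\cong H_1(T,\{p_1,p_2\};\Z/2\Z)$, and under this identification the class determined by the slit construction $T_1\#_I T_2$ is exactly $[I]$: cutting along $I$ and regluing the two sheets across the cut is the standard way to realize the double cover whose monodromy is ``detected'' by intersection with $I$ mod $2$. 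So the second step is to set up this dictionary carefully and check that the class attached to $T_1\#_I T_2$ is $[I]\in H_1(T,\{p_1,p_2\};\Z/2\Z)$, and that it is nonzero precisely because $I$ joins the two distinct points $p_1,p_2$ (so it is not a boundary).

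With the dictionary in hand, the equivalence is immediate in both directions: if $[I]=[I']$ then the two double covers have the same monodromy class, hence are isomorphic as branched covers of $(T,\{p_1,p_2\})$ by an isomorphism inducing the identity on $T$ with derivative the identity, which is exactly a translation equivalence $M\to M'$ respecting marked points; conversely, a translation equivalence $M=M'$ descends to $T$ (fixing $p_1,p_2$ and with trivial derivative, hence translation-equivalent to the identity on $T$) and carries one monodromy class to the other, forcing $[I]=[I']$. One subtlety to handle is that a priori a translation equivalence of $M$ need not be isotopic to one commuting with $\pi$; here I would use that the deck group is the center of the relevant automorphism group (or simply that the hyperelliptic-type involution on a genus-two surface is canonical), so conjugation by $\phi$ fixes the involution and $\phi$ descends. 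The homological computation itself is routine — the real content, and the step I expect to be the main obstacle, is pinning down precisely the correspondence ``slit construction $\leftrightarrow$ monodromy class'' and verifying it is natural enough that translation equivalences on the $M$-level correspond to equalities of classes on the $T$-level, as opposed to mere equalities up to the action of $\mathrm{Mod}(T,\{p_1,p_2\})$. Once that naturality is nailed down (and note $[I]$ is automatically $\mathrm{Mod}$-invariant data only in the sense that we have fixed $T$, $p_1$, $p_2$ and $I$, $I'$ are honest segments, not isotopy classes), the lemma follows.
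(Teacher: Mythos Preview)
Your proposal is correct and follows essentially the same route as the paper: associate to each slit construction the monodromy class of its branched double cover in $H^1(T\setminus\{p_1,p_2\};\Z/2\Z)$, then use Poincar\'e--Lefschetz duality to identify this with $[I]\in H_1(T,\{p_1,p_2\};\Z/2\Z)$. The paper in fact only gives a sketch and defers the details to \cite[Propositions 3.1 and 3.2]{2020tremors}, so your write-up is, if anything, more complete---in particular your attention to why a translation equivalence must intertwine the deck involutions (and hence descend to $T$) is a point the paper's sketch leaves implicit.
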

\textcolor{black}{%
\begin{proof}
See also \cite[Propositions~3.1 and~3.2]{2020tremors}.\\
By the classification of covering spaces, a connected $2$-fold covering of $T\setminus\{p_1,p_2\}$ branched over $p_1$ and $p_2$ corresponds uniquely to a surjective homomorphism
\[
\phi:H_1(T\setminus\{p_1,p_2\};\mathbb Z)\to \mathbb Z/2\mathbb Z
\]
with the property that
\[
\phi(\gamma_1)=\phi(\gamma_2)=1,
\]
where $\gamma_1$ and $\gamma_2$ are small loops winding once around $p_1$ and $p_2$, respectively. By the Universal Coefficient Theorem, this homomorphism is identified with a cohomology class
\[
\phi\in H^1(T\setminus\{p_1,p_2\};\mathbb Z/2\mathbb Z).
\]
Poincar\'e duality provides a natural isomorphism between the absolute cohomology of $T\setminus\{p_1,p_2\}$ and the homology of $T$ relative to $\{p_1,p_2\}$:
\[
\mathcal D:
H_1(T,\{p_1,p_2\};\mathbb Z/2\mathbb Z)
\xrightarrow{\cong}
H^1(T\setminus\{p_1,p_2\};\mathbb Z/2\mathbb Z).
\]
Under this duality, the isomorphism is given explicitly by the $\mathbb Z/2\mathbb Z$ geometric intersection pairing. The segments $I$ and $I'$ define relative homology classes
\[
[I],[I']\in H_1(T,\{p_1,p_2\};\mathbb Z/2\mathbb Z).
\]
Denote by
\[
\phi_I=\mathcal D([I])
\qquad\text{and}\qquad
\phi_{I'}=\mathcal D([I'])
\]
their dual cohomology classes. Since $I$ and $I'$ are segments with endpoints $p_1$ and $p_2$, we have
\[
\phi_I(\gamma_i)
=
[I]\cdot \gamma_i
=
[I']\cdot \gamma_i
=
\phi_{I'}(\gamma_i)
= 1 \pmod 2
\]
for $i=1,2$.\\
Finally, the translation surfaces $M$ and $M'$ are equal if and only if they define isomorphic coverings of the surface $T\in\H(0,0)$. This occurs if and only if the cohomology classes $\phi_I$ and $\phi_{I'}$ are equal. Since the map $\mathcal{D}$ is a bijection, $\phi_I=\phi_{I'}$ if and only if the underlying relative homology classes are equal:
\[
[I]=[I']
\quad\text{in}\quad
H_1(T,\{p_1,p_2\};\mathbb Z/2\mathbb Z).
\]
\end{proof}
}

This lemma will be fundamental for our computations. In many instances, we will represent the same surface differently, with a long slit, typically coming from the real Rel flow and a shorter slit in the same class of relative homology with coefficients in $\Z / 2\Z$. In other words, to check that the surfaces built by gluing with two slits $I$ and $I'$ (with endpoints $p_1$ and  $p_2$) are the same, it is enough to fix a basis of absolute homology, say $\delta, \gamma \in H_1(T)$ and compute the intersection numbers modulo 2. We use the notation $i(I,\delta)$, $i(I,\gamma)$, for this purpose. See for instance the proofs of \Cref{lemma:RelSubsequece} and \Cref{lemma:boundedarea}.

\subsubsection{On the horizontal flow and the slit construction}
This section introduces two lemmas that will be used frequently throughout this paper.
\begin{LEM}
$M \in \E$ is  horizontally periodic if and only if $M =T_1 \#_I T_2$, where $T$ is horizontally periodic.
\label{lemma:PeriodicSurf}
\end{LEM}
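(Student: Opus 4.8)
The plan is to prove both directions by passing between the covering surface $M$ and the base torus $T$, and by relating periodicity of the horizontal flow downstairs and upstairs through the translation covering map $\pi\colon M \to T$.

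\textbf{The forward direction.} Suppose $M \in \E$ is horizontally periodic, and fix a slit representation $M = T_1 \#_I T_2$ with $T = T_1 = T_2$, where I would like to choose the slit $I$ to be a \emph{horizontal} segment. The point here is that if $M$ is horizontally periodic then its horizontal saddle connections decompose $M$ into horizontal cylinders; I would first argue that one can take the slit $I$ along one such horizontal saddle connection — equivalently, by \Cref{lemma:Slits} we only care about the class $[I] \in H_1(T,\{p_1,p_2\};\Z/2\Z)$, and within that class we may choose a horizontal representative when $M$ is horizontally periodic (the two endpoints $p_1,p_2$ are the branch points, which lie on horizontal leaves that are closed or saddle connections). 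With $I$ horizontal, the covering map $\pi$ carries the horizontal cylinder decomposition of $M$ down to a horizontal decomposition of $T$: each horizontal leaf of $M$ maps to a horizontal leaf of $T$, and a closed (periodic) horizontal leaf maps to a closed horizontal leaf, so every horizontal leaf of $T$ is closed. Hence $T$ is horizontally periodic.

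\textbf{The reverse direction.} Conversely, suppose $M = T_1 \#_I T_2$ with $T$ horizontally periodic. Here I again want $I$ to be horizontal; since $T$ is horizontally periodic, its horizontal cylinder decomposition has a horizontal saddle connection (from $p_1$ to $p_2$, after possibly passing through other identifications — more precisely $p_1$ and $p_2$ lie on the boundary circles of the horizontal cylinders, and there is a horizontal path between them lying in the union of horizontal leaves), and by \Cref{lemma:Slits} we may replace $I$ by a horizontal representative of its $\Z/2\Z$-homology class. Slitting $T$ (two copies) along a \emph{horizontal} segment and regluing as in \Cref{fig:SlitConstruction} only alters horizontal leaves by cutting and rejoining them horizontally; no leaf acquires a vertical component, and every horizontal leaf of $M$ covers a horizontal leaf of $T$ with the slit added as extra boundary data. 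Since each horizontal leaf of $T$ is closed and the covering $\pi\colon M \setminus \Sigma_M \to T \setminus \Sigma_T$ is a finite covering, each horizontal leaf of $M$ is a finite cover of a circle, hence closed. Therefore $M$ is horizontally periodic.

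\textbf{Main obstacle.} The delicate point in both directions is the choice of a \emph{horizontal} slit $I$ within a prescribed $\Z/2\Z$-relative-homology class, and checking that this is legitimate precisely when the surface (resp.\ the torus) is horizontally periodic. I would handle this by using that, in a horizontally periodic translation surface, the union of horizontal saddle connections and closed horizontal leaves through any two chosen singular points is connected enough to contain a horizontal arc joining them, and that \Cref{lemma:Slits} gives complete freedom to modify $I$ within its mod-$2$ class. A secondary technical point is the direction ``$M$ horizontally periodic $\Rightarrow$ $T$ horizontally periodic'' when one does \emph{not} a priori know the slit can be taken horizontal; this is cleanest if one first establishes that horizontal periodicity is invariant under choosing any valid slit representation, which in turn follows from the discussion of the checkerboard decomposition and area exchange, since passing between two slit representations $T_1\#_I T_2$ and $T_1 \#_{I'} T_2$ reshuffles parallelograms without changing the horizontal foliation's leaf structure on $T$. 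Once the horizontal-slit normalization is in place, the covering-space argument relating closed leaves upstairs and downstairs is routine.
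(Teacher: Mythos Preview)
Your core argument is correct and is exactly the paper's: use the translation covering $\pi\colon M\to T$ directly. A closed horizontal leaf in $M$ projects to a closed horizontal leaf in $T$ (and for a torus one closed horizontal leaf forces all to be closed), while conversely a closed horizontal leaf $\sigma\subset T\setminus\{p_1,p_2\}$ has preimage $\pi^{-1}(\sigma)$ equal to one or two closed curves, so every horizontal leaf of $M$ is closed. You state both of these facts yourself, and they finish the lemma.

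The problem is that you have buried this two–line argument under a detour about choosing the slit $I$ to be horizontal, and that detour is both unnecessary and, as stated, wrong. Concretely: take $T=\R^2/\Z^2$ with branch points $p_1=(0,0)$ and $p_2=(0,\tfrac12)$. Then $T$ is horizontally periodic, but $p_1$ and $p_2$ lie on \emph{different} horizontal circles, so there is no horizontal segment in $T$ joining them, in any $\Z/2\Z$–class. Your claim that ``there is a horizontal path between them lying in the union of horizontal leaves'' fails here, and \Cref{lemma:Slits} does not help since it only lets you move within a fixed relative $\Z/2\Z$–class, not manufacture a horizontal representative that doesn't exist. The same issue arises in your forward direction: a horizontally periodic $M\in\H(1,1)$ need not have any horizontal saddle connection from $\zeta_1$ to $\zeta_2$ at all (the cylinder boundaries could all be loops based at a single singularity).

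Fortunately none of this matters: the covering argument you give at the end of each paragraph works for \emph{any} slit $I$, horizontal or not, because $\pi$ is a translation cover regardless. Your ``main obstacle'' is not an obstacle; simply delete the horizontal-slit normalization and keep the last two sentences of each direction.
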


\begin{proof}
The proof follows from the definition of the locus $\E$. Let $\pi: M \to T$ be a translation cover of a torus $T$. A horizontal closed trajectory $\delta$ in $M$ maps to a closed trajectory $\pi(\delta),$ and this implies that $T$ is horizontally periodic. 
Conversely, any horizontal closed curve $\sigma$ in $T\backslash\{p_1,p_2\}$ could lift to a closed curve on $M$. If it does not lift to a closed curve, the pre-image $\pi^{-1}(\sigma)$ is a closed curve since it is the concatenation of the two possible lifts of $\sigma.$ Finally, any horizontal segment on $M\backslash \Sigma_M$ is contained in the pre-image of some horizontal trajectory in $T\backslash\Sigma_T$. According to the previous argument, it must be contained in a horizontal closed curve. This proves the other direction of the Lemma.
\end{proof}

\begin{LEM}
For $M \in \E$, the following are equivalent:
\begin{enumerate}
\item \label{part1} $M$ is horizontally aperiodic and non-minimal.
\item \label{part2} There exists a slit construction $M=T_1 \#_I T_2$, with $T$ horizontally minimal and $I$ a horizontal segment.
\end{enumerate}
\label{lemma:AperiodicNonMinimal}
\end{LEM}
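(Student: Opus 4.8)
The plan is to prove the equivalence of the two statements in \Cref{lemma:AperiodicNonMinimal} by passing through the torus via the translation covering $\pi\colon M \to T$, using \Cref{lemma:PeriodicSurf} to dispose of the periodic case and the classical structure theory of the horizontal flow on a torus (every direction is either periodic or minimal) together with \Cref{lemma:Slits} to arrange the slit in the horizontal direction.

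First I would prove \textbf{(\ref{part2}) $\Rightarrow$ (\ref{part1})}. Suppose $M = T_1 \#_I T_2$ with $T$ horizontally minimal and $I$ horizontal. By \Cref{lemma:PeriodicSurf}, if $M$ were horizontally periodic then $T$ would be horizontally periodic, contradicting minimality; hence $M$ is horizontally aperiodic. For non-minimality, I would exhibit a closed (or non-dense) horizontal trajectory. The key observation is that the two sides of the slit $I$ lie on a single horizontal leaf of $T$ which is dense in $T$ since $T$ is horizontally minimal; slitting along $I$ and regluing breaks this leaf into two horizontal saddle connections (each going from $\zeta_1$ to $\zeta_2$ through one copy $T_i$) whose union is a closed horizontal curve in $M$ missing no singularity in its interior. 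A closed horizontal leaf forces the horizontal flow on $M$ not to be minimal. So $M$ is horizontally aperiodic and non-minimal.

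Next, \textbf{(\ref{part1}) $\Rightarrow$ (\ref{part2})}. Let $\pi\colon M \to T$ be the branched double cover with branch points $p_1, p_2$. Since $M$ is horizontally aperiodic, \Cref{lemma:PeriodicSurf} gives that $T$ is horizontally aperiodic; on a torus the horizontal direction is then minimal, i.e. every horizontal leaf is dense. Now I use that $M$ is non-minimal: there is a horizontal leaf $\ell \subset M$ that is not dense, and projecting it to $T$, since the horizontal flow on $T$ is minimal, the closure $\overline{\pi(\ell)}$ is all of $T$, so $\ell$ cannot be a closed leaf of full measure — rather, the fact that $\pi$ restricted to $\ell$ fails to be surjective onto a dense set of $M$ means $\ell$ must be a (finite union of) horizontal saddle connection(s): if $\ell$ avoided both singularities, its closure would be a $\pi$-saturated closed set surjecting onto $T$ under the minimal flow, hence all of $M$. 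So $M$ has a horizontal saddle connection $\sigma$. Its image $\pi(\sigma)$ is a horizontal segment in $T$ with endpoints among $\{p_1,p_2\}$ (a saddle connection of $M$ joins singularities, which map to branch points). I then claim $\sigma$ runs from $\zeta_1$ to $\zeta_2$ (not a loop at one singularity): if it were a horizontal loop at $\zeta_i$ it would project to a horizontal loop at $p_i$ in $T$, i.e. a closed horizontal leaf, contradicting minimality of $T$. Hence $\pi(\sigma)$ is an embedded horizontal segment $I$ in $T$ with endpoints $p_1 \neq p_2$, and $M$ is obtained by slitting $T$ along (a lift of) $I$; more precisely $\sigma$ together with its complementary lift forms the checkerboard curve, and by \Cref{lemma:Slits} the surface $M$ is recovered as $T_1 \#_I T_2$ with $T = T_1 = T_2$ horizontally minimal and $I$ horizontal, which is exactly (\ref{part2}).

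The main obstacle I anticipate is the careful argument in (\ref{part1}) $\Rightarrow$ (\ref{part2}) that a non-minimal, aperiodic $M \in \E$ must possess a \emph{horizontal saddle connection between the two distinct singularities} — ruling out both the possibility that the non-dense leaf is a regular closed leaf (excluded by aperiodicity plus minimality of $T$ upstairs) and the possibility that it is a loop at a single singularity (which would descend to a closed horizontal leaf on $T$, again contradicting minimality). Once the horizontal saddle connection is produced and seen to project to an embedded horizontal arc between $p_1$ and $p_2$, identifying $M$ with the slit construction $T_1 \#_I T_2$ is a direct application of \Cref{lemma:Slits} and the covering-space description of $\E$. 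One should also double-check the degenerate sub-case where $T$ could a priori be horizontally periodic is genuinely excluded — but that is immediate from \Cref{lemma:PeriodicSurf} and the aperiodicity hypothesis.
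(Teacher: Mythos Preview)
Your direction (\ref{part2}) $\Rightarrow$ (\ref{part1}) is fine in spirit, though the phrase ``a closed horizontal leaf forces non-minimality'' is slightly off: a closed \emph{leaf} would be a periodic orbit. The clean statement is that since $I$ is horizontal, no horizontal trajectory can cross it, so the interiors of $T_1$ and $T_2$ are each invariant under the horizontal flow; this gives non-minimality directly.

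The genuine gap is in (\ref{part1}) $\Rightarrow$ (\ref{part2}), precisely at the step you flagged as the main obstacle. You claim that if a non-dense leaf $\ell$ avoids both singularities then $\overline{\ell}$ is $\pi$-saturated and hence equals $M$. This is false. In the very configuration you are trying to produce, $M=T_1\#_I T_2$ with $I$ horizontal and $T$ minimal, take $\ell$ any regular horizontal leaf lying in $T_1$: then $\ell$ is bi-infinite, avoids both singularities, and $\overline{\pi(\ell)}=T$, yet $\overline{\ell}=\overline{T_1}\subsetneq M$ --- it contains only one of the two preimages of a generic point of $T$. So a non-dense regular leaf need not be a saddle connection, and your deduction collapses. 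What is actually needed is not that \emph{every} non-dense leaf is a saddle connection but that \emph{some} horizontal saddle connection exists. The paper secures this by first writing $M=T_1\#_{I'}T_2$ for an arbitrary (possibly non-horizontal) slit $I'$, then invoking \cite[Theorem 1.1, item (2)]{Lindsey} to get two minimal components and taking a horizontal saddle connection on their common boundary; its $\pi$-image $I$ satisfies that $I'\cup I$ separates, so \Cref{lemma:Slits} gives $M=T_1\#_I T_2$. Your argument \emph{after} a horizontal saddle connection $\sigma$ is in hand --- ruling out that $\sigma$ is a loop at a single singularity because it would descend to a closed horizontal leaf on $T$, contradicting minimality --- is correct and is a detail the paper leaves implicit.
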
 

\begin{proof}
We only prove that \cref{part1} implies \cref{part2}. The other direction is simpler.
If $M$ is horizontally aperiodic, by \Cref{lemma:PeriodicSurf} $M=T_1\#_{I'}T_2$ with $T$ horizontally minimal. Suppose that $I'$
is not horizontal and consider the connected components of the horizontal flow. There are two minimal components by \cite[Theorem 1.1, item (2)]{Lindsey}.  
The boundary of each minimal component must contain a horizontal saddle connection; we call $I\subset T$ the image under $\pi$ of this saddle connection.
Since the closed curve $I'\cup I$ separates $M$ into two components, then $M= T_1\#_I T_2$.
\end{proof}

\subsection{Continued fractions}

For any $x \in \R$, denote by $\{x\}$ the \emph{fractional part} and by $\lfloor x \rfloor$  the \emph{integer part} of $x$.  
Let $\lVert x \rVert$ be $\min_{n \in \Z} |x - n | = \min\{\{x\},1-\{x\}\}$.
We say that $\alpha \in \R\backslash{\Q}$ is \emph{well approximable} if 
\begin{equation*}
\liminf_{q \in \N} q \lVert q \alpha \rVert = 0,
\end{equation*}
otherwise an irrational number $\alpha$ is \emph{badly approximable}.
Let $\alpha$ be an irrational number. The \emph{continued fraction expansion} of $\alpha$ is the expression
\begin{equation*}
\alpha =a_0 + \frac{1}{a_1
+\frac{1}{a_2   + \frac{1}{a_3 + \frac{1}{\ddots}} 
}
}.
\end{equation*}
The \emph{best approximants} of $\alpha$ are the rational numbers
\begin{equation*}
\frac{p_n}{q_n} =a_0 + \frac{1}{a_1
+\frac{1}{a_2   + \frac{1}{a_3 + \frac{1}{\ddots +\frac{1}{a_n}}} 
}
}.
\end{equation*}
The integers $a_0\geq0$, $a_1,a_2,\dots \geq 1$ are called \emph{partial quotients.}

\begin{THM}    
For $\alpha \in \R\backslash\Q,$ the following holds:
\begin{enumerate}[label=(\arabic*)]
\label{BigThmContFrac}

\item 
\label{item:defofpq}
\cite[Theorem 1]{Khinchin} For every $k \geq 2$,
\begin{equation*}
\begin{split}
p_k &= a_kp_{k-1}+p_{k-2},\\
q_k &= a_kq_{k-1}+q_{k-2},
\end{split}
\end{equation*}
and $\gcd(p_k,q_k)=1.$
\item
\label{KhinchinInequality}
\cite[Theorems 9 and 13]{Khinchin} For $k\geq 1$, then
\begin{equation*}
\frac{1}{q_{k+1}+q_k} < \lVert q_k \alpha \rVert < \frac{1}{q_{k+1}} < \frac{1}{a_{k+1}q_{k}}.
\end{equation*}
\item
\label{item:bestapproximation}
\cite[Theorem 16]{Khinchin} For any $m \in \{1,2,\dots,q_{k+1}-1\}$, $m\not=q_k$ and $k \geq 1$, then
\begin{equation*}
\lVert q_k \alpha \rVert < \lVert m\alpha \rVert. 
\end{equation*}
\item \label{item:equivalencesofbaddlyaprrox} The following are equivalent: 
\begin{enumerate}
\item $\exists c>0$ such that $\forall k \geq 1$, then $q_k\lVert q_k \alpha\rVert > c$, \label{baddlyapproxa}

\item $\exists K> 0$ such that  $\forall i \geq 1$, then $a_i \leq K$,
\label{baddlyapproxb}

\item $\exists c'>0$ such that $\forall k \geq 1$,  $\frac{q_k}{q_{k+1}} \geq c',$
\label{baddlyapproxc}

\item $\alpha$ is badly approximable.
\label{baddlyapproxd}
\end{enumerate}

\end{enumerate}
\end{THM}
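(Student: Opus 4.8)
Items \ref{item:defofpq}, \ref{KhinchinInequality} and \ref{item:bestapproximation} are the classical recursion for the convergents together with Khinchin's two extremal estimates, so I would take them verbatim from the cited theorems of \cite{Khinchin} and add nothing. The real content is the chain of equivalences in \ref{item:equivalencesofbaddlyaprrox}, and the plan is to prove it as a string of biconditionals, each extracted from exactly one of the three preceding items: \ref{baddlyapproxd} $\Leftrightarrow$ \ref{baddlyapproxa} from \ref{item:bestapproximation}, then \ref{baddlyapproxa} $\Leftrightarrow$ \ref{baddlyapproxc} from \ref{KhinchinInequality}, then \ref{baddlyapproxc} $\Leftrightarrow$ \ref{baddlyapproxb} from \ref{item:defofpq}.

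For \ref{baddlyapproxd} $\Leftrightarrow$ \ref{baddlyapproxa}: first I would rephrase ``$\alpha$ badly approximable'' as $\inf_{q\ge 1} q\lVert q\alpha\rVert>0$, the passage from ``$\liminf>0$'' to a uniform bound over all $q$ costing only the absorption of the finitely many strictly positive numbers $q\lVert q\alpha\rVert$ with $q$ small into the constant, using $\alpha\notin\Q$. Then, since $\{q_k\}$ is a subsequence of $\N$, one inequality between the two infima is immediate; for the other, given $q$ I would locate $k$ with $q_k\le q<q_{k+1}$ (possible because $q_0=1$ and $q_k\to\infty$ by \ref{item:defofpq}) and apply \ref{item:bestapproximation} to get $\lVert q_k\alpha\rVert\le\lVert q\alpha\rVert$, hence $q_k\lVert q_k\alpha\rVert\le q\lVert q\alpha\rVert$; the handful of $q$ below $q_1$ are harmless. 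So the two infima coincide and \ref{baddlyapproxd} $\Leftrightarrow$ \ref{baddlyapproxa}.

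For \ref{baddlyapproxa} $\Leftrightarrow$ \ref{baddlyapproxc}: multiplying \ref{KhinchinInequality} through by $q_k$ gives
\[
\frac{1}{2}\cdot\frac{q_k}{q_{k+1}}\;\le\;\frac{q_k}{q_{k+1}+q_k}\;<\;q_k\lVert q_k\alpha\rVert\;<\;\frac{q_k}{q_{k+1}},
\]
the left inequality using $q_k\le q_{k+1}$ (valid for $k\ge1$ by \ref{item:defofpq}); so $q_k\lVert q_k\alpha\rVert$ and $q_k/q_{k+1}$ lie within a factor of two of each other, and the two boundedness statements are equivalent. For \ref{baddlyapproxc} $\Leftrightarrow$ \ref{baddlyapproxb}: the recursion $q_{k+1}=a_{k+1}q_k+q_{k-1}$ yields $a_{k+1}<q_{k+1}/q_k\le a_{k+1}+1$ because $0<q_{k-1}/q_k\le 1$, so $\sup_k q_{k+1}/q_k$ is finite iff $\sup_{i\ge2}a_i$ is, and adjoining the single extra quotient $a_1$ only alters the constant; since \ref{baddlyapproxc} is exactly $\inf_k q_k/q_{k+1}>0$, i.e. $\sup_k q_{k+1}/q_k<\infty$, this closes the cycle.

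I do not expect a genuine obstacle: the whole argument is the bookkeeping of feeding Khinchin's three estimates into one another. The only places requiring a moment's care are the two ``all but finitely many $\Rightarrow$ all'' reductions (over the indices $q$, and over the partial quotients $a_i$), each handled by observing that the exceptional terms form a finite set of strictly positive numbers that can be swallowed into the relevant constant.
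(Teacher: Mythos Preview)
Your proposal is correct and follows essentially the same route as the paper: the paper proves \ref{baddlyapproxa} $\Rightarrow$ \ref{baddlyapproxc} $\Rightarrow$ \ref{baddlyapproxb} $\Rightarrow$ \ref{baddlyapproxa} as a cycle and then \ref{baddlyapproxa} $\Leftrightarrow$ \ref{baddlyapproxd} separately, using precisely the same three ingredients (item \ref{KhinchinInequality} for the link between \ref{baddlyapproxa} and \ref{baddlyapproxc}, item \ref{item:defofpq} for \ref{baddlyapproxb} and \ref{baddlyapproxc}, item \ref{item:bestapproximation} for \ref{baddlyapproxa} and \ref{baddlyapproxd}). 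Your organization into direct biconditionals is a cosmetic difference only.
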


\begin{proof}
We will prove \Cref{item:equivalencesofbaddlyaprrox} for completeness by proving the implications
\Cref{baddlyapproxa} $\Rightarrow$ \Cref{baddlyapproxc} $\Rightarrow$ \Cref{baddlyapproxb} $\Rightarrow$ \Cref{baddlyapproxa}
and \Cref{baddlyapproxa} $\Leftrightarrow$ \Cref{baddlyapproxd}. 

If \Cref{baddlyapproxa} holds, then by \Cref{KhinchinInequality} we obtain that 
\begin{equation*}
c<q_k\lVert q_k \alpha \rVert < \frac{q_k}{q_{k+1}},
\end{equation*}
which proves \Cref{baddlyapproxc}.

If \Cref{baddlyapproxc} is true, then let $K $ be equal to $\frac{1}{c'}$ and observe that by \Cref{item:defofpq}
\begin{equation*}
a_{k+1} < a_{k+1} +\frac{q_{k-1}}{q_k} =\frac{q_{k+1}}{q_k} \leq \frac{1}{c'}=K,
\end{equation*}
which implies \Cref{baddlyapproxb}. 

Now suppose that \Cref{baddlyapproxb} is true. Then by \Cref{KhinchinInequality} we obtain
\begin{equation*}
c \dfn \frac{1}{K+2}< \frac{1}{a_{k+1} + \frac{q_{k-1}}{q_k} + 1}=\frac{1}{\frac{q_{k+1}}{q_k} + 1} < q_k \lVert q_k \alpha \rVert
\end{equation*}
which proves \Cref{baddlyapproxa}.

Finally, we prove that \Cref{baddlyapproxa}  is equivalent to \Cref{baddlyapproxd}. Indeed, for every positive integer $m$, let $k\geq 1$ such that $q_k \le m <q_{k+1}$. Then using \Cref{item:bestapproximation} we see that 
\begin{equation*}
c < q_{k}\lVert q_k \alpha \rVert \le m \lVert m \alpha \rVert.
\end{equation*}
This proves the equivalence since the previous inequality implies that
\begin{equation*}
0 < c \le \liminf_{k \geq 0} q_{k}\lVert q_k \alpha \rVert = \liminf_{m \geq 1} m \lVert m\alpha \rVert.
\end{equation*}
\end{proof}

\section{Real Rel and area exchange}
\label{sec:RelRec}

In this section, we introduce the area exchange associated to two slit presentations of the same surface in $\E$. The purpose of this definition is to compare period-coordinate charts that arise from different slit representatives. Indeed, if
\[
M=T_1\#_I T_2,
\]
then applying real Rel changes only the relative period of the slit. Thus, for large values of the real Rel parameter, the slit in the presentation of $\Rel_s M$ has large horizontal holonomy. This gives a period-coordinate chart, but it is often more convenient to represent the same surface using another slit with shorter holonomy. The area exchange measures how the passage from one slit presentation to another modifies the decomposition of the surface into the two torus components. Later, this will allow us to track how the corresponding change of period coordinates affects the foliation cocycles.

Suppose that
\[
T_1\#_{I}T_2
\qquad\text{and}\qquad
T_1\#_{I'}T_2
\]
represent the same translation surface in $\E$. The slits $I$ and $I'$ are straight segments with the same endpoints. We denote by $I\cup I'$ the closed curve determined by the union of these two segments. The class of this closed curve is homologous to zero in
\[
H_1(T;\Z/2\Z).
\]
The curve $I\cup I'$ separates $T$ into parallelograms that can be colored with two colors so that no two adjacent parallelograms have the same color. This colored decomposition records how to shuffle the parallelograms in the presentation $T_1\#_I T_2$ and glue them back to obtain the presentation $T_1\#_{I'}T_2$; see \cite[Lemma 10.5]{2020tremors}. 

\begin{figure}
\centering
\includegraphics[trim={7cm 8cm 4cm 9cm},scale=0.5]{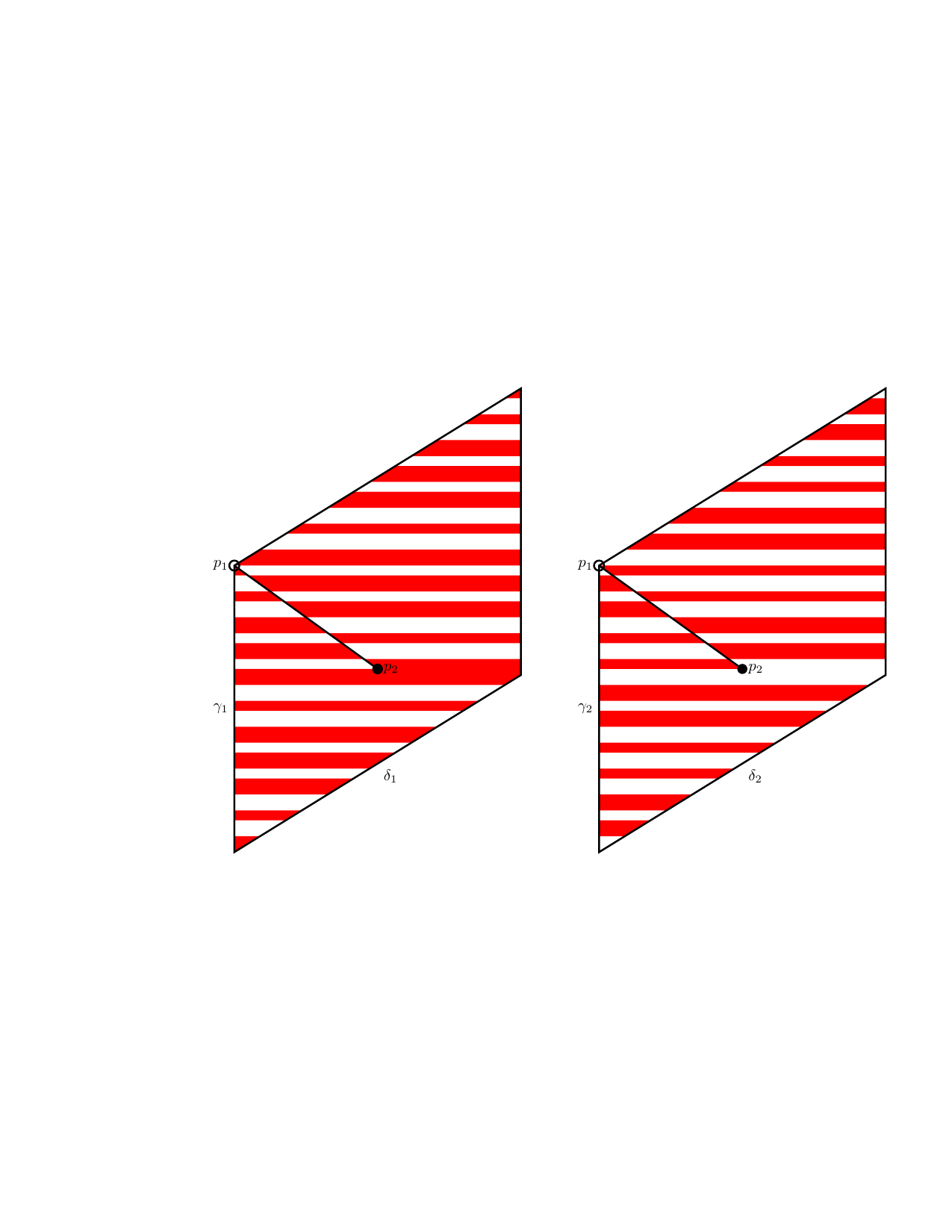}
\caption{The surface $M$ has a slit between the singularities $p_1$ and $p_2$. The regions of different colors are minimal components of the horizontal flow.}
\label{fig:AreaExchange}
\end{figure}
\textcolor{black}{
Denote by $A_1$ and $A_2$ the two colored regions in $T$ separated by the curve $I \cup I'$.
}
The \emph{area exchange} is the pair of numbers $(B_1,B_2)$ where $B_i = {\rm area}(A_i)$ for $i=1,2$ and the \emph{area imbalance} is  $\frac{|B_1-B_2|}{{\rm area}(T)}$. In \Cref{fig:AreaExchange}, the area exchange is the area of each colored region restricted to one of the copies of the torus $T$.
In \cite[Section 10.1]{2020tremors}, the union $I\cup I'$ is called  \emph{checkerboard}.

We will make some simplifying assumptions by taking advantage of the continuity of the $\operatorname{SL}(2,\R)$ action and the fact that $\E$ is $u_s$ and $g_t$ invariant.

We will suppose that $M \in \E$ is vertically periodic. Then we can write $M = T_1 \#_I T_2$ where $T$ is the quotient $\R^2/ \Lambda$ by a lattice $\Lambda = (0,b)\Z\otimes (\frac{a}{b},\alpha b)\Z$, with $a,\alpha, b > 0 $. 

By replacing $M$ with $g_{\ln(b)}M$, we can assume $b=1$. Then the surface is  vertically periodic and the length of a vertical closed curve is 1.
On $T$, the first return time of the horizontal flow to a vertical closed curve of length $1$ is conjugate to the map $R_\alpha:[0,1) \to [0,1)$, $x \mapsto x+\alpha \bmod{1}$.

\begin{LEM}
\label{lemma:short_slit_representative}
Let
\[
T=\mathbb R^2/\Lambda,
\qquad
\Lambda=\langle (0,1),(a,\alpha)\rangle,
\]
and let $P:\mathbb R^2\to T$ be the universal covering map, given by
\[
P\bigl(s(0,1)+t(a,\alpha)\bigr)
=
(s \bmod \mathbb Z)(0,1)+(t \bmod \mathbb Z)(a,\alpha).
\]
Suppose that $M$ can be represented as $T_1\#_I T_2$, where 
\[
I\subset T=T_1=T_2
\]
is a horizontal segment. Then there exists a segment $I'\subset T$ such that
\[
[I]=[I']
\quad\text{in}\quad
H_1(T,\{p_1,p_2\};\mathbb Z/2\mathbb Z),
\]
and such that $I'$ is the image under $P$ of a straight segment contained in a fundamental parallelogram for $T$. In particular, the length of $I'$ is bounded above by the diameter of $T$.
\end{LEM}

The choice of the correct segment $I'$ can be read from the intersection computations in \Cref{fig:IntersectionNumbers}: after fixing generators of relative homology, one compares the mod $2$ intersection numbers with these generators.

\begin{proof}
Let $p_1$ and $p_2$ be the endpoints of $I$, and choose coordinates so that $p_1=P(0,0)$. Let
\[
\widetilde p_2=(xa,x\alpha+y)
\]
be the lift of $p_2$ contained in the fundamental parallelogram
\[
\{s(0,1)+t(a,\alpha):0\leq s,t<1\},
\]
where $0\leq x,y<1$.

Consider the four vertices
\[
(0,0),\qquad (0,1),\qquad (a,\alpha),\qquad (a,\alpha+1)
\]
of this fundamental parallelogram. By \Cref{lemma:Slits}, one of the four straight segments joining $\widetilde p_2$ to one of these vertices projects under $P$ to a segment $I'\subset T$ satisfying
\[
[I]=[I']
\quad\text{in}\quad
H_1(T,\{p_1,p_2\};\mathbb Z/2\mathbb Z).
\]
The segment $I'$ is contained in the image of the fundamental parallelogram, so its length is bounded by the diameter of $T$.
\end{proof}

\textcolor{black}{We now prove recurrence for tremors of surfaces in the locus $\E$ when the slope parameter $\alpha$ is well approximable.}

\begin{THM}
\label{prop:recurrencecase}
Suppose that $\alpha$ is well approximable. Let $\beta=\beta_v$, and assume that
$v\neq c\,\d y$
for every $c\in\mathbb R.$
Then there exists an increasing sequence $\{t_j\}$ with $t_j\to\infty$ such that
\[
\Rel_{t_j}\trem_{\beta}M \to \trem_{\beta}M.
\]
\end{THM}
\textcolor{black}{%
We first prove a normalized version of recurrence. In the following \namecref{lemma:RelSubsequece}, the slit is assumed to be horizontal of length $a$. This is a special case of the preceding \namecref{prop:recurrencecase}.
}

\begin{LEM}
\label{lemma:RelSubsequece}
Suppose that
\[
M=T_1\#_I T_2,
\qquad
T=\mathbb R^2/\bigl((0,1)\mathbb Z\oplus (a,\alpha)\mathbb Z\bigr),
\]
and that $I$ is a horizontal segment of length $a$. Then there exists a sequence of surfaces
\[
M_j \dfn T_1\#_{I_j}T_2
\]
such that $I_j$ is a horizontal segment of length $(2q_j+1)a$ and
\[
M_j\to M.
\]
Moreover, for each $j$, there is a shorter segment $I'_j$ with the same endpoints as $I_j$ such that $I'_j\to I$, and, for all sufficiently large $j$, the area imbalance between $I_j$ and $I'_j$ is
\[
1-(2q_j+1)\|q_j\alpha\|.
\]
The surfaces $M_j$ are different from $M$, and
\[
M_j=\Rel_{t_j}M
\qquad\text{where}\qquad
t_j=2q_ja.
\]
In particular,
\[
\Rel_{t_j}M\to M
\]
as $j\to\infty$, so the positive real Rel trajectory of $M$ is recurrent.
\end{LEM}

\begin{proof}
Let $q=q_j$ be a denominator of a best approximation of $\alpha$ such that $\|q\alpha\|<1/2$, and let
\[
P:\mathbb R^2\to T
\]
be the universal covering map. Suppose that the left endpoint of $I$ lifts to $(0,0)\in\mathbb R^2$. To simplify notation, write
\[
I_q=I_j
=
P\bigl(\{(t,0):0\leq t\leq (2q+1)a\}\bigr).
\]
The segment $I_q$ intersects the vertical segment
\[
\delta=P\bigl(\{(0,s):0\leq s\leq 1\}\bigr)
\]
at the points
\[
P(0,0)=P(0,1),\,
P(0,1-R_\alpha(0)),\dots,\,
P(0,1-R_\alpha^{2q+1}(0)).
\]

The shorter segment with the same endpoints is
\[
I'_q=I'_j
=
P\left(
\left\{
\left(as,1+s\bigl(\alpha-R_\alpha^{2q+1}(0)\bigr)\right):
0\leq s\leq 1
\right\}
\right).
\]
The segments $I_q$ and $I'_q$ intersect three times: at their two endpoints and at the midpoint. Indeed, this midpoint is
\[
P\left(\frac a2,1+\frac{\alpha-R_\alpha^{2q+1}(0)}2\right)
=
P\left(\frac a2,1-\frac{R_\alpha^{2q}(0)}2\right).
\]
\textcolor{black}{%
We now explain why this is the only intersection point in the interior. By \Cref{BigThmContFrac}, \cref{item:bestapproximation}, if
\[
1\leq m\leq q_{j+1}-1
\qquad\text{and}\qquad
m\neq q=q_j,
\]
then
\[
\|q\alpha\|<\|m\alpha\|.
\] 
After ordering the points $R_\alpha^k(0)$, with $0\leq k\leq 2q$, in the interval $[0,1)$, either
\[
0<R_\alpha^q(0)<R_\alpha^{2q}(0)
\]
or
\[
R_\alpha^{2q}(0)<R_\alpha^q(0)<1.
\]
Moreover, there is no other point $R_\alpha^k(0)$ between $0$ and $R_\alpha^q(0)$, nor between $R_\alpha^q(0)$ and $R_\alpha^{2q}(0)$. Indeed, suppose first that $0<k<q$. If $R_\alpha^k(0)$ lies between $0$ and $R_\alpha^q(0)$, then $\|k\alpha\|<\|q\alpha\|$, contradicting \Cref{BigThmContFrac}, \cref{item:bestapproximation}. If $R_\alpha^k(0)$ lies between $R_\alpha^q(0)$ and $R_\alpha^{2q}(0)$, then
\[
\|R_\alpha^k(0)-R_\alpha^q(0)\|=\|(q-k)\alpha\|<\|q\alpha\|.
\]
Letting $m=q-k$, we have $0<m<q$, again contradicting \Cref{BigThmContFrac}, \cref{item:bestapproximation}.\\
Now suppose that $q<k<2q$. If $R_\alpha^k(0)$ lies between $0$ and $R_\alpha^q(0)$, or between $R_\alpha^q(0)$ and $R_\alpha^{2q}(0)$, then
\[
\|R_\alpha^k(0)-R_\alpha^q(0)\|=\|(k-q)\alpha\|<\|q\alpha\|.
\]
Letting $m=k-q$, we have $0<m<q$, which contradicts \Cref{BigThmContFrac}, \cref{item:bestapproximation}. 
This proves that there is exactly one intersection in the interior between $I_q$ and $I'_q$.\\
The vertical length of $I'_q$ is $\bigl|\alpha-R_\alpha^{2q+1}(0)\bigr|=  \|2q\alpha\|$. The intersection happens when the vertical distance is $\|q\alpha\|$. Hence the intersection occurs halfway along $I'_q$,  and therefore is at the midpoint of both $I_q$ and $I_q'$.
}
\begin{CLAIME} 
We have
\[
T_1\#_{I_q}T_2 = T_1\#_{I'_q}T_2.
\]
\end{CLAIME}
\begin{proof}
\renewcommand{\qedsymbol}{$\triangle$}
Let us fix representatives of generators of $H_1(T)$ in order to compute the intersection numbers with $I_q$ and $I'_q$. This will allow us to apply \Cref{lemma:Slits} and prove the claim.
As before, let $\delta\subset T$ be the closed curve given by the $P$-image of the segment joining $(0,0)$ and $(0,1)$ in $\mathbb R^2$. Let $\gamma\subset T$ be the closed curve given by the $P$-image of the segment joining $(0,0)$ and $(a,\alpha)$.

The intersection number $i(I_q,\delta)$ is $2(q+1)$, as seen above. Also, $i(I'_q,\delta)=2$, since the intersection points are $P(0,0)$ and $P(0,1-R_\alpha^{2q+1}(0))$.

Denote by $p=p_j$ the numerator of the best approximant $p_j/q_j$. Then $i(I_q,\gamma)$ is the number of intersections of the curve $\{(t,0):0\leq t\leq (2q+1)a\}$ with the diagonal lines $y=\frac{\alpha}{a}x-n$, for $n\in\mathbb N$. This number is equal to $m+1$, where $m$ is the largest positive integer such that the lift of $I_q$ starting at $0\in\mathbb R^2$, namely $\{(t,0):0\leq t\leq (2q+1)a\}$, intersects all the lines $y=\frac{\alpha}{a}x-n$ for $n=0,\dots,m$.

To determine $m$, notice that the endpoint $((2q+1)a,0)$ lies below the line $y=\frac{\alpha}{a}x-m$ and above the line $y=\frac{\alpha}{a}x-(m+1)$. This is equivalent to
\[
(2q+1)\alpha-(m+1)<0<(2q+1)\alpha-m,
\]
which is equivalent to
\[
m<(2q+1)\alpha<m+1.
\]
Moreover, $m=2p_j$ for all sufficiently large $j$. Indeed, \Cref{BigThmContFrac}, \Cref{KhinchinInequality} implies that
\[
-\frac{1}{q_{j+1}}<q_j\alpha-p_j<\frac{1}{q_{j+1}}.
\]
This implies that, for all sufficiently large $j$,
\[
0<(2q_j+1)\alpha-2p_j<\frac{2}{q_{j+1}}+\alpha<1.
\]
Thus $i(I_q,\gamma)=2p_j+1$.

Moreover, $i(I'_q,\gamma)=1$, because any lift of $I'_q$ to $\mathbb R^2$ intersects the diagonal lines $y=\frac{\alpha}{a}x-m$ only once, for all $q=q_j$ sufficiently large. For example, the lift starting at $(0,0)$ intersects $y=\frac{\alpha}{a}x$ only at $(0,0)$.

Strictly speaking, the intersection pairing used in \Cref{lemma:Slits} is with closed curves in $T\setminus\{p_1,p_2\}$. The curves $\delta$ and $\gamma$ pass through $p_1$ and $p_2$, so we replace them by small perturbations $\delta'$ and $\gamma'$ in $T\setminus\{p_1,p_2\}$ representing the corresponding homology classes. 
Therefore $i(I_q,\delta') = 2q+1$, $i(I'_q,\delta')=1$, $i(I_q,\gamma') = 2p_j$, and $i(I'_q,\gamma')=0$.
Hence
\[
[I_q]=[I'_q]
\quad\text{in}\quad
H_1(T,\{p_1,p_2\};\mathbb Z/2\mathbb Z).
\]
\end{proof}
To compute the area exchange, notice that $I_q$ and $I'_q$ separate $T$ into two trapezoids, since these segments share endpoints and intersect in their interiors exactly once. Both trapezoids have base of length $(q+\frac{1}{2})a$, and one of them has height $\lVert q\alpha\rVert$. Since the area of $T$ is $a$, the areas of the two trapezoids are
\[
S=\left(q+\frac{1}{2}\right)a\lVert q\alpha\rVert
\qquad\text{and}\qquad
a-S.
\]
Thus the area imbalance is
\[
\frac{|a-2S|}{a}
=
1-(2q+1)\lVert q\alpha\rVert.
\]
The rest of \Cref{lemma:RelSubsequece} follows from the fact that $I'_q\to I$ as $q\to\infty$. Therefore,
\[
T_1\#_{I'_q}T_2 \to T_1\#_I T_2.
\]
This completes the proof.
\end{proof}

The preceding lemma provides the main mechanism for the proof of \Cref{prop:recurrencecase}. This theorem proves one direction of \Cref{thm:mainthm} by showing that the recurrence behavior of the real Rel trajectory persists under the tremors considered.
The idea is to use recurrence times $t_j\to\infty$ for which $\Rel_{t_j}M$ is close to $M$. At these times, $\Rel_{t_j}M$ admits a slit presentation whose short slit is close to the original slit of $M$. The area exchange between the two slit presentations controls the corresponding change of period coordinates after applying the tremor, and this shows that $\Rel_{t_j}\trem_\beta M$ is close to $\trem_\beta M$.

\begin{proof}[Proof of \Cref{prop:recurrencecase}]
Recall that we are assuming that $T=T_1=T_2$, where
\[
T=\mathbb R^2/\bigl((0,1)\mathbb Z\oplus (a,\alpha)\mathbb Z\bigr).
\]
We can assume that $M'$ is of the form
\[
M'=T_1\#_{I_\ell}T_2,
\]
where $I_\ell$ is a horizontal segment with holonomy $(\ell,0)$ and $\ell>0$ is arbitrary. 
\textcolor{black}{
Let
\[
M=T_1\#_I T_2,
\]
where $I$ is a horizontal segment with holonomy $(a,0)$. By \Cref{item:RelCondition}, the surface $\Rel_s M$ is defined for all $s>-a$. Taking $s=\ell-a$, we obtain
\[
M'=\Rel_{\ell-a}M.
\]
}

\begin{CLAIME}
With $M$ and $M'$ as above, it is enough to prove \Cref{prop:recurrencecase} in the case where
\[
M=T_1\#_I T_2
\]
and $I$ is a horizontal segment of length $a$. More precisely, suppose that there exists a sequence $t_j\to\infty$ such that
\[
\Rel_{t_j}\trem_\beta M \to \trem_\beta M.
\]
Then
\[
\Rel_{t_j}\trem_\beta M' \to \trem_\beta M'.
\]
\end{CLAIME}

\begin{proof} 
\renewcommand{\qedsymbol}{$\triangle$}
By \cite[Proposition 6.7]{2020tremors},
we have the equation:
\begin{equation} 
\Rel_s \Rel_{t_j} \trem_{\beta}M = 
\Rel_{t_j} \trem_{\beta}\Rel_s M =
\Rel_{t_j} \trem_{\beta} M'.
\label{eq:groupLaw}
\end{equation}
Assume $\Rel_s\trem_\beta M$ is defined with $s=\ell-a>0$. We want to show that the map $\Rel_s(\cdot)$ is well defined and continuous in a neighborhood of $\trem_\beta M$. We will use  \cite[Proposition 4.3]{HoroDynamics} and their notation regarding the Rel map (not just real Rel). By \cite[Proposition 4.3]{HoroDynamics}, the set $\Omega \subset \H \times \mathfrak{R}$ of pairs $(N,v)$ for which $\Rel_vN$ is defined, is open and the map $(N,v) \mapsto \Rel_vN$ is continuous on $\Omega$. The projection maps $\pi_1: \Omega \to \H$ and  $\pi_2:\Omega \to \mathfrak{R}$  are continuous and open. Also, the projection map $P_x:\mathfrak{R}\to\mathfrak{R}_x$, $(x,y)\mapsto x$ is continuous.
Since $(\trem_\beta M,(s,0))$ is in $\Omega$, there exists  $\varepsilon>0$ such that $O\dfn \pi_1\left(\pi_2^{-1}(P_x^{-1}(s-\varepsilon,s+\varepsilon))\right)$ is an open subset of $\Omega$ and contains $\trem_\beta M$. Moreover, the map $N \mapsto  \Rel_sN$ is defined and continuous on $O$, since it is the map $(N,v)\mapsto \Rel_vN$ restricted to $\{(N,(s,0)): N \in O\}\subset \Omega$.

Thus, combining \Cref{eq:groupLaw} and continuity of the map $\Rel_s(\cdot)$ at $\trem_\beta M$ allows us to take the limit:  
\begin{equation*}
\lim_{j \to \infty}\Rel_{t_j} \trem_{\beta} M'
= \lim_{j \to \infty}\Rel_s \Rel_{t_j} \trem_{\beta}M = \Rel_s \trem_\beta M = \trem_\beta M'. 
\end{equation*}
\end{proof}
\textcolor{black}{By \Cref{lemma:RelSubsequece}, if we set $t_j=2q_j a$, then
\[
\Rel_{t_j}M\to M
\]
as $j\to\infty$.} We now show that, under the well-approximability assumption on $\alpha$ in \Cref{prop:recurrencecase}, the same sequence gives recurrence for the tremor of $M$; that is,
\[
\Rel_{t_j}\trem_\beta M \to \trem_\beta M.
\]

We work in a fixed period coordinate chart near $M$ adapted to the slit construction. In this chart, a surface close to $M$ is represented by a tuple
\[
(\gamma_1,\delta_1,\gamma_2,\delta_2,J)\in(\mathbb R^2)^5,
\]
where $\gamma_i$ and $\delta_i$ are the periods of the two generators of the torus $T_i$, and $J$ is the holonomy vector of the slit. Thus $T_i=\mathbb R^2/\Lambda_i$, where
\[
\Lambda_i=\gamma_i\mathbb Z\oplus \delta_i\mathbb Z.
\]
For the surface
\[
M=T_1\#_I T_2
\]
under consideration, we have
\[
\gamma_1=\gamma_2=(0,1),
\qquad
\delta_1=\delta_2=(a,\alpha),
\qquad
J=I=(a,0).
\]
Hence $M$ is represented in these coordinates by
\[
((0,1),(a,\alpha),(0,1),(a,\alpha),(a,0)).
\]
Let
\[
M_j=\Rel_{2q_ja}M.
\]
By \Cref{lemma:RelSubsequece}, the surface $M_j$ can be written as
\[
M_j=T_1\#_{I'_j}T_2,
\]
where $I'_j$ has the same endpoints as the horizontal slit $I_j$ and its holonomy vector converges to the holonomy of $I$. More precisely, in the notation of the proof of \Cref{lemma:RelSubsequece}, the holonomy vector of $I'_j$ is
\[
J_j=(a,(-1)^{j+1}2\|q_j\alpha\|).
\]
Therefore, in the same period-coordinate chart, $M_j$ is represented by
\[
((0,1),(a,\alpha),(0,1),(a,\alpha),J_j).
\]
Thus the torus periods remain fixed along the sequence $M_j$, and the only coordinate that changes is the slit holonomy $J_j$, which converges to $J=(a,0)$.

Since $M$ is aperiodic and not minimal, there are two minimal components; see, for instance, \cite[Theorem 1.1 item (2)]{Lindsey}. 
\textcolor{black}{%
These components are tori where the slit is a boundary, and minimality of the horizontal flow on each component implies unique ergodicity on that component. In particular, the restriction of Lebesgue measure to each component is an ergodic invariant measure. Thus the transverse measure defining the tremor can be written as a linear combination of the two invariant transverse measures supported on the two tori. Equivalently, the foliation cocycle decomposes as
\begin{equation}\label{eq:decomp_cocycle}
\beta=s_1\beta_1+s_2\beta_2,
\end{equation}
where $\beta_i$ is the foliation cocycle induced by the restriction of $\d y$ to the $i$th minimal component.
}
Equivalently, there are real numbers $s_1$ and $s_2$ such that, in the period coordinate chart adapted to the slit construction, the tremor $\trem_\beta M$ is represented by
\[
\left(
u_{s_1}\gamma_1,
u_{s_1}\delta_1,
u_{s_2}\gamma_2,
u_{s_2}\delta_2,
I
\right).
\]
Here the first two coordinates are the periods of the first torus, the next two are the periods of the second torus. The last coordinate is the slit holonomy $(a,0)$ since it is horizontal.

\textcolor{black}{%
We fix a period-coordinate chart adapted to the slit construction and containing $\trem_\beta M$. It is enough to show that, after passing to a subsequence, the period coordinates of $\Rel_{t_j}\trem_\beta M$ in this chart converge to the period coordinates of $\trem_\beta M$.}

Since $\alpha$ is well approximable, there exists an increasing sequence $(j_k)_{k\in\mathbb N}$ such that
\[
a_{j_k+1}\to\infty .
\]
By \Cref{BigThmContFrac},
\[
q_{j_k}\lVert q_{j_k}\alpha\rVert
<
\frac{1}{a_{j_k+1}}
\to 0
\qquad
\text{as}
\qquad
k\to\infty .
\]
Let
\[
t_{j_k}=2q_{j_k}a .
\]
\textcolor{black}{
For the sequence of slits $I_{j_k}$ and $I'_{j_k}$ from \Cref{lemma:RelSubsequece}, define
\[
\kappa_{j_k}
:=
\left(q_{j_k}+\frac12\right)
\lVert q_{j_k}\alpha\rVert a .
\]
Then $\kappa_{j_k}\to 0$. The area imbalance between $I_{j_k}$ and $I'_{j_k}$ is
\[
1-\frac{2\kappa_{j_k}}{a}
=
1-(2q_{j_k}+1)\lVert q_{j_k}\alpha\rVert,
\]
and hence tends to $1$. Equivalently, the smaller exchanged area tends to $0$.
}

To avoid excessive notation, from now until the end of the proof we write $j$ for $j_k$. Thus $t_j$ means $t_{j_k}$ and $\kappa_j$ means $\kappa_{j_k}$.

Recall that
\[
\trem_\beta \Rel_{t_j}M
=
\Rel_{t_j}\trem_\beta M.
\]
\textcolor{black}{Applying \Cref{eq:holtrem} to the marked surface $\Rel_{t_j}M$ gives
\[
\hol_{\Rel_{t_j}\trem_\beta M}
=
\hol_{\Rel_{t_j}M}+(\beta,0).
\]}
\textcolor{black}{%
The key point is that, along the subsequence chosen above, the smaller coordinate of the area exchange tends to zero. Consequently, after passing from the long-slit presentation to the short-slit presentation, each period coordinate of $\Rel_{t_j}\trem_\beta M$ is a linear combination of two torus periods: the corresponding period of $\trem_\beta M$ with coefficient tending to $1$, and a period from the other torus with coefficient tending to $0$. Hence these period coordinates converge to the corresponding period coordinates of $\trem_\beta M$.}

Using the decomposition of $\beta$ from \Cref{eq:decomp_cocycle}, and using the area imbalance between $I_j$ and $I'_j$, the first coordinate of $\Rel_{t_j}\trem_\beta M$ in the fixed period-coordinate chart is
\[
(1-O(\kappa_j))u_{s_1}\gamma_1
+
O(\kappa_j)u_{s_2}\gamma_2,
\]
and the second coordinate is
\[
(1-O(\kappa_j))u_{s_1}\delta_1
+
O(\kappa_j)u_{s_2}\delta_2.
\]
Similarly, the third and fourth coordinates are
\[
(1-O(\kappa_j))u_{s_2}\gamma_2
+
O(\kappa_j)u_{s_1}\gamma_1
\]
and
\[
(1-O(\kappa_j))u_{s_2}\delta_2
+
O(\kappa_j)u_{s_1}\delta_1.
\]

It remains to check the slit coordinate. The fifth coordinate of $\Rel_{t_j}M$ is the holonomy of $I'_j$, and by \Cref{lemma:RelSubsequece} we have
\[
I'_j\to I.
\]
Moreover, since the smaller exchanged area is $O(\kappa_j)$, the contribution of the cocycle $\beta$ to the slit coordinate satisfies
\[
\beta(I'_j)=(a+O(\kappa_j),O(\kappa_j)).
\]
The slit coordinate of $\Rel_{t_j}\trem_\beta M$ is
\[
(a+O(\kappa_j),O(\kappa_j)),
\]
because $I'_j\to I$ and the contribution of the smaller colored region has area $O(\kappa_j)$.
Thus the slit coordinate of $\Rel_{t_j}\trem_\beta M$ converges to the slit coordinate of $\trem_\beta M$.

Therefore all five period coordinates of $\Rel_{t_j}\trem_\beta M$ converge to the corresponding period coordinates of $\trem_\beta M$. Hence
\[
\Rel_{t_j}\trem_\beta M
\to
\trem_\beta M.
\]
\end{proof}

\section{Non-recurrence of real Rel}

\label{sec:proofoflemmaboundedarea}

We will prove the following theorem at the end of this section.

\begin{THM}
Let $M \in\E$ be a horizontally aperiodic, non-minimal surface and let $\alpha$ be badly approximable. Then for
\[
\beta=\beta_v
\quad
\text{with}
\quad
v \not \in \{ c \d y: c \in \R\backslash\{0\}\},
\]
and for every sequence of positive numbers $t_i \to \infty$,
\begin{equation*}
\trem_\beta M \not \in \bar{\{\Rel_{t_i}\trem_\beta M\}}.
\end{equation*}
\label{thm:NonRecurrenceofRel}
\end{THM}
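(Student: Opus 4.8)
The strategy is to reduce to the concrete model and then extract a quantitative obstruction from the area imbalance analysis of \Cref{lemma:RelSubsequece}, using the badly approximable hypothesis in the form of \Cref{BigThmContFrac}, item \cref{baddlyapproxa}. First I would normalize: by the $G$-equivariance in \Cref{thm:RelProperties}\,(\cref{item:RelGequivariance}) and the tremor identities (the interaction of $trem_\beta$ with $g_s$ and $u_s$), together with the $u_s$- and $g_t$-invariance of $\E$, I may assume $M = T_1 \#_I T_2$ with $T = \R^2/((0,1)\Z \oplus (a,\alpha)\Z)$ and $I$ a horizontal segment of length $a$; this is the normalization already used in \Cref{sec:RelRec}. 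Since $v \notin \{c\,\d y : c\in\R\}$ the tremor is ``nontrivial'' and, as in the proof of \Cref{prop:recurrencecase}, $trem_\beta M = (u_{s_1}T_1)\#_I(u_{s_2}T_2)$ with $(s_1,s_2)$ not both equal; in particular $trem_\beta M \neq M$, and more importantly the two copies are genuinely distinct tori inside the surface.

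The heart of the matter is to show that the forward $Rel$ orbit of $trem_\beta M$ stays a definite distance away from $trem_\beta M$. Exactly as in \Cref{prop:recurrencecase}, using $Rel_t trem_\beta = trem_\beta Rel_t$ and the fact that continuity of $Rel_s(\cdot)$ lets one conjugate the question to the case $L_M(\beta)$ small, it suffices to treat $s\beta$ for small $s$ and hence to understand $Rel_t trem_\beta M$ in the fixed orbifold chart $(\gamma_1,\delta_1,\gamma_2,\delta_2,J)\in(\R^2)^5$ around $M$. For a time $t$ with $t/a$ large, write $Rel_t(T_1\#_I T_2) = T_1\#_{I_t}T_2$ with $I_t$ horizontal of length $a+t$; by \Cref{lemma:Slits} this equals $T_1\#_{I'_t}T_2$ for the short diagonal replacement slit $I'_t$, and the analysis in \Cref{lemma:RelSubsequece} (done there for the special lengths $(2q+1)a$, but valid for every length by interpolation between consecutive best-approximant scales) shows that the torus appearing in this representation is a convex combination $(1-O(s_t))u_{s_i}T_i + O(s_t)u_{s_{i'}}T_{i'}$, where $s_t$ is comparable to the area imbalance, i.e. $s_t \asymp (q_k + \tfrac12)\lVert q_k\alpha\rVert\, a$ for the best-approximant scale $q_k$ with $q_k a \lesssim t < q_{k+1}a$. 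The badly approximable hypothesis via \cref{baddlyapproxa} gives $q_k\lVert q_k\alpha\rVert > c > 0$ for all $k$, hence $s_t$ is bounded below by $\tfrac{c}{2}a$ for all $t$ with $t/a$ large — in particular the ``tremor parameters'' $(s_1,s_2)$ of the two tori in the representation of $Rel_t trem_\beta M$ are smeared by a definite amount, so $Rel_t trem_\beta M$ cannot be within $\tfrac{c}{4}a$ (say, in the chart metric) of $trem_\beta M = (u_{s_1}T_1)\#_I(u_{s_2}T_2)$. Running this for all $t_i\to\infty$ and then undoing the reductions (the $s\beta$-to-$\beta$ step and the initial $G$-normalization, both by the same continuity arguments as in \Cref{prop:recurrencecase}) yields that $trem_\beta M$ is not in the closure of $\{Rel_{t_i}trem_\beta M\}$.

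\textbf{Main obstacle.} The delicate point is the interpolation claim: \Cref{lemma:RelSubsequece} is stated and proved only for slit lengths $(2q_j+1)a$, but here I need a lower bound on the area imbalance of $Rel_t trem_\beta M$ for \emph{every} large $t$, not just along that subsequence. The resolution is that for a general length $\ell = a+t$ one chooses $k$ with $q_k a \le t < q_{k+1}a$ (so $\ell/a < 2q_{k+1}+1$) and compares $I_t$ not to a single diagonal but to the diagonal replacement slit whose vertical displacement is $\pm\lVert q_k\alpha\rVert$ together with $(\lfloor \ell/a\rfloor - 2q_k)$ further copies of the horizontal translate; \Cref{BigThmContFrac}\,\cref{item:bestapproximation} again guarantees $\lVert q_k\alpha\rVert$ is the smallest relevant distance, so the short slit $I'_t$ still has vertical extent comparable to $\lVert q_k\alpha\rVert$ and the two trapezoids cut out of $T$ have areas differing by a quantity $\gtrsim q_k\lVert q_k\alpha\rVert a > ca$. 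One must also check the degenerate configurations (when $\ell/a$ is close to an integer multiple of $q_k$, or when $s_t$ happens to be small because $\lVert q_k\alpha\rVert$ is small but $q_k$ is correspondingly large) are handled uniformly by the single inequality $q_k\lVert q_k\alpha\rVert > c$; this uniformity in $t$ is exactly what the badly approximable hypothesis buys and is what makes the non-recurrence hold along \emph{every} divergent sequence rather than merely generically.
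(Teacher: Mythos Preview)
Your overall strategy---the area exchange of the long versus short slit is bounded below when $\alpha$ is badly approximable, and this forces the ``mixed'' tremor coordinates of $Rel_t\,trem_\beta M$ to stay a definite distance from those of $trem_\beta M$---is exactly the mechanism the paper uses. But two load-bearing steps are missing, and neither is a formality.

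\medskip
\textbf{The area-exchange lower bound for arbitrary $t$.} You invoke \Cref{lemma:RelSubsequece}, which only treats slit lengths $(2q_j+1)a$, and then assert an ``interpolation'' to all lengths. The paper does \emph{not} interpolate: it proves a separate, substantially longer lemma (\Cref{lemma:boundedarea}) handling an arbitrary horizontal slit of length $Na$ with $N>1$, splitting into the cases $N=2k$, $N=2k+1$, and $N\notin\Z$. In each case one must identify the correct short replacement slit $I'$ via the $\Z/2\Z$ intersection parities with $\gamma$ and $\delta$, describe the checkerboard decomposition $I\cup I'$ creates, and bound \emph{both} colored areas from below by a constant depending only on $c$ from \Cref{BigThmContFrac}\,\cref{baddlyapproxa} and $c'$ from \cref{baddlyapproxc}. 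Your sketch (``diagonal replacement slit with vertical displacement $\pm\lVert q_k\alpha\rVert$ together with further horizontal translates'') does not match any of these cases, and your conclusion that ``the two trapezoids have areas \emph{differing} by $\gtrsim q_k\lVert q_k\alpha\rVert a$'' is the wrong inequality: a lower bound on the difference gives no lower bound on the smaller area, which is what you need.

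\medskip
\textbf{The missing rigidity step.} You reduce to small $|L|_M(\beta)$ by scaling $\beta\mapsto s\beta$ and claim to undo this ``by the same continuity arguments as in \Cref{prop:recurrencecase}.'' But the direction of implication is reversed here: you need that $Rel_{t_i}trem_\beta M\to trem_\beta M$ forces $Rel_{t_i}trem_{s\beta}M\to trem_{s\beta}M$, which in turn requires $Rel_{t_i}M\to M$. Extracting $Rel_{t_i}M\to M$ from $trem_\beta Rel_{t_i}M\to trem_\beta M$ is not just continuity of the tremor map; it is an injectivity/rigidity statement. The paper supplies it as \Cref{prop:convergence} (supported by \Cref{lemma:UOrbitAndTremor} and \Cref{lemma:compactsequences}): if $trem_{\beta_i}M_i\to trem_\beta M$ with signed masses constant and total variations bounded, then $M_i\to M$ and $\beta_i\to\beta$. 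Without this, your chart computation is also unmoored: even if $Rel_{t_i}trem_\beta M$ lands near $trem_\beta M$, you have no a priori reason that its slit representation agrees with the one your formula $(1-O(s_t))u_{s_1}\gamma_1+O(s_t)u_{s_2}\gamma_2$ presupposes. The paper avoids this by arguing by contradiction, invoking \Cref{prop:convergence} to force $\beta_i\to\beta$, and then showing $|\beta_i(\gamma_{1,i})-\beta(\gamma_1)|=\tfrac{B^i_2}{\operatorname{area}(T)}|a_1-a_2|$ is bounded away from $0$ by \Cref{lemma:boundedarea}.
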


The first step towards proving \Cref{thm:NonRecurrenceofRel} is to prove a \namecref{lemma:boundedarea} about the area exchange of two slits.

\textcolor{black}{%
We use the normalization introduced at the beginning of \Cref{sec:RelRec}. Namely, after applying the continuous actions $g_t$ and $u_r$, we may assume that the torus is
\begin{equation*}
T=\mathbb R^2/\Lambda,
\qquad
\Lambda=(a,\alpha)\mathbb Z\oplus (0,1)\mathbb Z,
\end{equation*}
where $a>0$ and $\alpha\in(0,1)$ is irrational. This normalization is only a simplification of coordinates and does not change the recurrence or non-recurrence question under consideration.
}

The surface $\Rel_s M$ can be represented as a slit construction with a long horizontal slit $I$, but it can also be represented with a different, non-horizontal slit $I'$. More precisely, we consider pairs of slits $I$ and $I'$ in $T$ with the same endpoints such that
\[
\Rel_sM = T_1 \#_I T_2 = T_1 \#_{I'} T_2,
\]
where $I$ is horizontal and $I'\neq I$. \textcolor{black}{%
Recall that by \Cref{lemma:Slits}, the slits $I$ and $I'$ determine the same relative homology class modulo $2$.
}

We would like to determine how close $\trem_\beta M$ and $\Rel_s\trem_\beta M$ can be. When $\alpha$ is badly approximable, the next lemma shows that the area exchange between $I$ and any such alternative slit $I'$ is bounded below by a positive constant independent of the length of the horizontal slit.

\begin{LEM}
Suppose that $I$ is a horizontal segment in $T$ with $\length(I)>a$, and suppose that $I'$ is a segment in $T$ such that $I'\neq I$ and
\[
T_1 \#_{I} T_2 = T_1 \#_{I'} T_2.
\]
If $\alpha$ is badly approximable, then the pair of numbers in the definition of area exchange between $I$ and $I'$ are bounded below by a positive constant independent of the lengths of $I$ and $I'$.
\label{lemma:boundedarea}
\end{LEM}

\begin{proof}
Let $\zeta_1,\, \zeta_2$ be the left and right endpoints of $I$, respectively, and let $P:\mathbb R^2\to T$ be the universal cover. 

In $\mathbb R^2$, consider the vertical segment with endpoints $(0,0)$ and $(0,1)$, and the diagonal segment with endpoints $(0,0)$ and $(a,\alpha)$. These segments project to closed curves in $T$. We denote their projections by $\gamma$ and $\delta$, respectively. These curves generate $H_1(T)$.

\textcolor{black}{%
In order to apply the intersection pairing with  \Cref{lemma:Slits}, the curves $\gamma$ and $\delta$ should be regarded as representatives of classes in
\[
H_1(T\setminus\{\zeta_1,\zeta_2\}).
\]
If one of the chosen representatives passes through $\zeta_1$ or $\zeta_2$, we replace it by a sufficiently small perturbation in $T\setminus\{\zeta_1,\zeta_2\}$. Such a perturbation does not change the modulo ${2}$ intersection computations below.}

Without loss of generality, we assume that the lift of $I$ under $P$ has left endpoint at the origin and right endpoint at $(\length(I),0)$.
We denote the modulo ${2}$ intersection numbers of $I$ with the chosen representatives of $\gamma$ and $\delta$ by
\[
i(I,\gamma)
\qquad\text{and}\qquad
i(I,\delta).
\]
By \Cref{lemma:Slits}, these intersection numbers determine the relevant relative homology class modulo $2$. That is, given $I$ and $I'$, if 
\[
\begin{split}
i(I,\delta) = i (I',\delta)
\quad
\text{and}
\quad
i(I,\gamma) = i (I',\gamma)
\end{split}
\]
then $[I] = [I']$ as classes in $H_1(T, \{\zeta_1,\zeta_2\};\Z/2\Z)$.
\textcolor{black}{%
See \Cref{fig:IntersectionNumbers} for an example after applying the small perturbation described above.}

\textcolor{black}{%
Let $N=\frac{\length(I)}{a}$. As in the proof of \Cref{lemma:RelSubsequece}, the number of intersections of $I$ with $\gamma$ is determined by the number of fundamental horizontal periods crossed by $I$, while the number of intersections with $\delta$ is determined by the number of diagonal periods crossed by $I$. Hence
\[
i(I,\gamma)=\lfloor N\rfloor
\qquad
\text{and}
\qquad
i(I,\delta)=\lfloor N\alpha\rfloor.
\]
}

\begin{figure}
\centering
\includegraphics{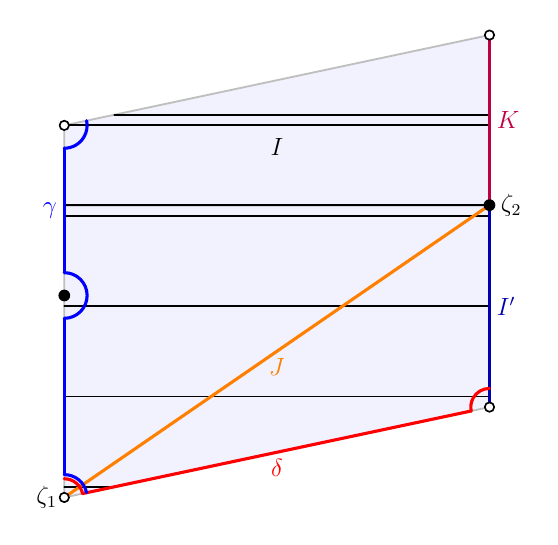}
\caption{The curves $\gamma$ and $\delta$ are representatives of generators of $H_1(T)$. The curves $I$ and $I'$ represent the same class in $H_1(T,\Sigma;\Z/2\Z)$ because the intersection numbers with $\gamma$ and $\delta$ have the same parity, \ie $i(I,\gamma) \equiv i(I'
,\gamma)$ and $i(I,\delta) \equiv i(I',\delta)  \mod 2$. The segments $I'$, $J$, and $K$ represent three different classes. Also, $T \#_IT = T\#_{I'}T$.}
\label{fig:IntersectionNumbers}
\end{figure}

We split the proof into different cases according to the length of the horizontal slit $I$ of $M$:

\begin{itemcases}
\item \label{case:N_even}
$N=2k$ for some positive integer $k$.

\item \label{case:N_odd}
$N=2k+1$ for some non-negative integer $k$.

\item \label{case:N_between}
$2k<N<2k+1$ or $2k+1<N<2k+2$.
\end{itemcases}

\textcolor{black}{\emph{Proof of \Cref*{case:N_even}.}}
Suppose that $N=2k$ for some positive integer $k$. Thus $I$ has even intersection number with a representative of $\gamma$.

We now choose a segment $I'$ joining the same endpoints as $I$ and satisfying
\[
[I']=[I]
\quad\text{in}\quad
H_1(T,\Sigma;\mathbb Z/2\mathbb Z).
\]
There are two natural vertical choices. These choices are vertical because
\[
\hol_T(I)=(\length(I),0)=(2ka,0)
\]
and
\[
\hol_T(\gamma)=(a,\alpha).
\]
Thus, after subtracting $2k$ copies of $\gamma$, the remaining displacement is vertical. Equivalently, an alternative representative $I'$ can be chosen with vertical holonomy
\[
\hol_T(I')=(0,2k\alpha)
\mod \Lambda.
\]

Namely, $I'$ can be chosen to be the image under $P$ of one of the following two vertical segments in $\mathbb R^2$:
\begin{enumerate}[label=(\roman*)]
\item the segment joining $(0,1)$ to $(0,1-\{N\alpha\})$;
\item the segment joining $(0,1-\{N\alpha\})$ to $(0,0)$.
\end{enumerate}
Each of these vertical segments has intersection number $0$ with $\gamma$ modulo $2$, and
\[
i(I,\gamma)=2k=0 \pmod 2.
\]
The choice between (i) and (ii) is made so that the intersection number with $\delta$ also agrees with that of $I$. Since
\[
i(I,\delta)=\lfloor N\alpha\rfloor,
\]
we choose the segment in (i) when $\lfloor N\alpha\rfloor$ is even, and the segment in (ii) when $\lfloor N\alpha\rfloor$ is odd. With this choice, we have
\[
i(I',\gamma)=i(I,\gamma)
\quad\text{and}\quad
i(I',\delta)=i(I,\delta)
\qquad \pmod 2.
\]
Therefore, by \Cref{lemma:Slits},
\[
T_1\#_I T_2=T_1\#_{I'}T_2.
\]

For example, in \Cref{fig:IntersectionNumbers}, the segment $K$ corresponds to the choice in (i), while the segment labeled $I'$ corresponds to the choice in (ii). Both have intersection number $0$ with $\gamma$ modulo $2$. The segment $K$ has intersection number $0$ with $\delta$, while the segment $I'$ has intersection number $1$ with $\delta$. Thus, depending on the parity of $\lfloor N\alpha\rfloor$, one of these two choices gives the required representative.

We now estimate the area exchange between $I$ and $I'$. The two slits $I$ and $I'$ divide the torus into rectangles with horizontal and vertical sides. Since $N=2k$, a vertical closed curve crossing the horizontal slit meets $2k$ horizontal strips. Thus the coloring contains $k$ horizontal rectangles of each color.

Assume that
\[
q_l\leq N<q_{l+1}.
\]
By the best-approximation property of $q_l$, each horizontal rectangle appearing in this decomposition has height at least
\[
\lVert q_l\alpha\rVert .
\]
Each such rectangle has width $a$. Hence each of the two colored regions has area at least
\[
k\,a\,\lVert q_l\alpha\rVert .
\]
Since $q_l\leq N=2k$, we have $k\geq q_l/2$. Therefore each of the two regions has area at least
\[
\frac{a q_l}{2}\lVert q_l\alpha\rVert .
\]
If $\alpha$ is badly approximable, then there exists $c>0$ such that
\[
q_l\lVert q_l\alpha\rVert \geq c
\]
for every $l$. Consequently, each of the two regions has area at least
\[
c_1:=\frac{ac}{2}.
\]
Thus both numbers defining the area exchange are bounded below by a positive constant independent of $N$.

\emph{Proof of \Cref*{case:N_odd}.}
Suppose that $N=2k+1$. We can choose $I'$ to be one of the following:
\begin{enumerate}[label=(\roman*)]
\item the $P$-image of the shortest segment in $\mathbb R^2$ joining the points $(0,1)$ and $(a,1-\{(N-1)\alpha\})$; or
\item the $P$-image of the shortest segment in $\mathbb R^2$ joining the points $(0,0)$ and $(a,1-\{(N-1)\alpha\})$.
\end{enumerate}
These are the relevant choices because, after fixing a representative of $\gamma$ in $H_1(T)$, for example the curve shown in \Cref{fig:IntersectionNumbers}, each of these two choices intersects this representative exactly once. On the other hand, the horizontal segment $I$ intersects the representative of $\gamma$ exactly $2k+1$ times.

Of these two choices, exactly one gives the desired segment $I'$. Both choices have the same intersection number with $\gamma$ modulo $2$, so the correct choice is determined by the intersection number with $\delta$. After fixing a representative of $\delta$, as in \Cref{fig:IntersectionNumbers}, the first segment does not intersect $\delta$, while the second segment intersects $\delta$ exactly once. Recall that $I$ intersects the representative of $\delta$ exactly $\lfloor N\alpha\rfloor$ times. Thus the choice of $I'$ must have intersection number $\lfloor N\alpha\rfloor$ with $\delta$ modulo $2$.

The horizontal curve $I$ intersects the vertical closed curve $\gamma$ exactly $N$ times. The segments $I$ and $I'$ separate $T$ into two regions, which we color with two colors. Together with $\gamma$, they determine $N$ rectangles. Some of these rectangles are cut by $I'$, and have different colors on the two sides of the cut.

In cases (i) and (ii), the slope of $I'$ is
\[
-\frac{\{(N-1)\alpha\}}{a}
\qquad\text{and}\qquad
\frac{1-\{(1-N)\alpha\}}{a},
\]
respectively. We prove the case in which $I'$ has negative slope; the other case is analogous.

One of the colored regions has $\frac{N+1}{2}$ trapezoids and rectangles below $I'$, while the other region has $\frac{N-1}{2}$ trapezoids and rectangles below $I'$. 

Assume that
\[
q_l\leq N<q_{l+1}.
\]
Then the smallest height of these trapezoids or rectangles is at least $\lVert q_l\alpha\rVert$.

Let $M$ be the number of rectangles cut by $I'$, where $2\leq M\leq N-1$. There are precisely $\frac{M}{2}$ such rectangles in each region. 
In \Cref{fig:ChoiceofSlitEvencase}, $M=2$.

\begin{figure}
\centering\includegraphics[scale=0.85]
{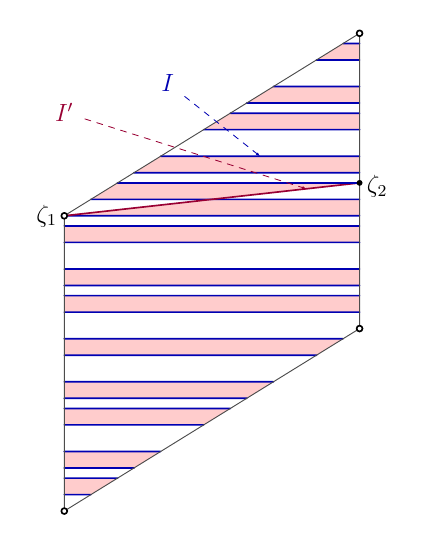}
\caption{The torus is represented by a fundamental parallelogram with opposite sides identified. The horizontal slit $I$ starts at the point labeled $\zeta_1$ and ends at the point labeled $\zeta_2$. Its length satisfies $16<N=\length(I)/a<17$. The long horizontal slit $I$ and the shorter non-horizontal slit $I'$, both joining $\zeta_1$ to $\zeta_2$, represent the same class in $H_1(T,\Sigma;\mathbb Z/2\mathbb Z)$. The red regions represent one of the two regions separated by the slits. Here $\alpha=(-1+\sqrt{5})/2=[0;1,\dots]$.}
\label{fig:ChoiceofSlitEvencase}
\end{figure}

The number of strips not cut by $I'$ is exactly $N-M$. One of the regions has $\frac{N-M+1}{2}$ such strips of full length $a$, and the other has $\frac{N-M-1}{2}$ such strips of full length. For the strips cut by $I'$, the following list gives lower bounds for the lengths of the consecutive bases of the trapezoids:
\[
\left\{
0,
a\frac{\lVert q_l\alpha\rVert}{\{(N-1)\alpha\}},
a\frac{2\lVert q_l\alpha\rVert}{\{(N-1)\alpha\}},
\dots,
a\frac{M\lVert q_l\alpha\rVert}{\{(N-1)\alpha\}}
\right\}.
\]

The area of one of the regions is bounded below by
\[
\left[
2a\sum_{i=0}^{\frac{M-2}{2}}
\frac{\lVert q_l\alpha\rVert}{2}
\left(
\frac{2i\lVert q_l\alpha\rVert}{\{(N-1)\alpha\}}
+
\frac{(2i+1)\lVert q_l\alpha\rVert}{\{(N-1)\alpha\}}
\right)
\right]
+
a\frac{N-M+1}{2}\lVert q_l\alpha\rVert .
\]

The analogous lower bound for the other region is
\begin{equation}
\left[
2a\sum_{i=1}^{\frac{M}{2}}
\frac{\lVert q_l\alpha\rVert}{2}
\left(
\frac{(2i-1)\lVert q_l\alpha\rVert}{\{(N-1)\alpha\}}
+
\frac{(2i)\lVert q_l\alpha\rVert}{\{(N-1)\alpha\}}
\right)
\right]
+
a\frac{N-M-1}{2}\lVert q_l\alpha\rVert .
\label{eq:LowerBoundII}
\end{equation}

For the first region, we obtain the lower bound
\begin{equation}
\begin{split}
a\frac{M^2-M}{2}\lVert q_l\alpha\rVert^2
\frac{1}{\{(N-1)\alpha\}}
+
a\frac{N-M+1}{2}\lVert q_l\alpha\rVert
&\geq
a\frac{M^2-M}{2}\lVert q_l\alpha\rVert^2 \\
&\quad+
a\frac{N-M+1}{2}\lVert q_l\alpha\rVert .
\end{split}
\label{eq:LowerBoundI}
\end{equation}
Fix $t\in(0,1)$. We consider two cases:
\begin{enumerate}[label=(\roman*)]
\item If $tN>M$, then a lower bound for \Cref{eq:LowerBoundI} is
\begin{equation}
\begin{split}
a\frac{N-tN+1}{2}\lVert q_l\alpha\rVert
&\geq
a(1-t)\frac{N}{2}\lVert q_l\alpha\rVert \\
&\geq
a(1-t)\frac{q_l}{2}\lVert q_l\alpha\rVert
\geq
\frac{a(1-t)c}{2}.
\end{split}
\label{eq:area1case1}
\end{equation}

\item If $tN\leq M$, then, using the additional fact that $M\leq N-1$, a lower bound for \Cref{eq:LowerBoundI} is
\begin{equation}
\begin{split}
a\frac{M^2-M}{2}\lVert q_l\alpha\rVert^2
&\geq
a\frac{(tN)^2+1-N}{2}\lVert q_l\alpha\rVert^2 \\
&\geq
a\frac{(tN-1)^2}{2}\lVert q_l\alpha\rVert^2 \\
&\geq
a\left(
\frac{tq_{l-1}}{2}\lVert q_l\alpha\rVert
\right)^2
\geq
\frac{at^2c'^2c^2}{4}.
\end{split}
\label{eq:area1case2}
\end{equation}
\end{enumerate}
In the last line of \Cref{eq:area1case2}, we used \Cref{BigThmContFrac}, \cref{baddlyapproxc}. In both cases, the lower bounds are uniform and independent of $N$.

For the second region, a lower bound for \Cref{eq:LowerBoundII} is
\begin{equation}
a(M^2+M)\frac{\lVert q_l\alpha\rVert^2}{2}
+
a(N-M-1)\frac{\lVert q_l\alpha\rVert}{2}.
\label{eq:LowerBoundII1}
\end{equation}
Using the same ideas as above, for a fixed $t\in(0,1)$, we obtain two lower bounds:
\begin{enumerate}[label=(\roman*)]
\item If $M>tN$, then \Cref{eq:LowerBoundII1} is bounded below by
\begin{equation}
a\left(
t q_l\frac{\lVert q_l\alpha\rVert}{2}
\right)^2
\geq
\frac{at^2c^2}{4}.
\label{eq:area2case1}
\end{equation}

\item If $M\leq tN$, then \Cref{eq:LowerBoundII1} is bounded below by
\begin{equation}
a(1-t)q_{l-1}\frac{\lVert q_l\alpha\rVert}{2}
\geq
\frac{a(1-t)c'c}{2},
\label{eq:area2case2}
\end{equation}
\end{enumerate}
where we again use \Cref{BigThmContFrac}, \cref{baddlyapproxc}. This shows that the areas of both regions are bounded below by $c_2$, the minimum of the expressions in \Cref{eq:area1case1,eq:area1case2,eq:area2case1,eq:area2case2}.

\emph{Proof of \Cref{case:N_between}.}
Finally, we consider the following two subcases:

\begin{subitemcases}
\item \label{case:N_between_3a}
$2k<N<2k+1$.

\item \label{case:N_between_3b}
$2k+1<N<2k+2$.
\end{subitemcases}

Proof of \Cref{case:N_between_3a}.
Suppose that $2k<N<2k+1$. We can choose $I'$ to be the image under $P$ of either the segment with endpoints
\[
(0,0)
\quad\text{and}\quad
(a(N-2k),1-\{2k\alpha\}),
\]
or the segment with endpoints
\[
(0,1)
\quad\text{and}\quad
(a(N-2k),1-\{2k\alpha\}).
\]
The closed curve determined by $I$ and $I'$ separates $T$ into two regions, which we denote by $A_1$ and $A_2$. Define $S_1\subset T$ by
\[
S_1 =
\{p\in T:\text{ there exists }(x,y)\in P^{-1}(p)
\text{ with }0\leq x\leq a(N-2k)\},
\]
and let
\[
S_2=T\setminus S_1.
\]

On $S_1$, the area exchange is bounded below using the method from \Cref{case:N_odd}. On $S_2$, the area exchange is bounded below using the method from \Cref{case:N_even}. Note that the rectangles in $S_1$ have width $as$, while the rectangles in $S_2$ have width $a(1-s)$, where
\[
s=N-2k.
\]
Therefore,
\begin{equation}
\begin{split}
B_1:=\area(A_1)
&=\area(A_1\cap S_1)
+\area(A_1\cap S_2) \\
&\geq c_2s+c_1(1-s)\\
&\geq \min\{c_1,c_2\}
=:c_3.
\end{split}
\end{equation}
Similarly,
\[
B_2:=\area(A_2)\geq c_3.
\]
The constant $c_3$ is independent of the lengths of $I$ and $I'$.

Proof of \Cref{case:N_between_3b}.
Suppose that $2k+1<N<2k+2$. We can choose $I'$ to be the image under $P$ of either the segment with endpoints
\[
(a,\alpha+1)
\quad\text{and}\quad
(a(N-2k-1),\alpha+1-\{(2k+2)\alpha\}),
\]
or the segment with endpoints
\[
(a,\alpha)
\quad\text{and}\quad
(a(N-2k-1),\alpha+1-\{N\alpha\}).
\]
As above, the closed curve determined by $I$ and $I'$ separates $T$ into two regions, which we denote by $A_1$ and $A_2$. Let $S_1\subset T$ be the set
\[
S_1 =
\{p\in T:\text{ there exists }(x,y)\in P^{-1}(p)
\text{ with }ar\leq x\leq a\},
\]
where
\[
r=N-2k-1.
\]
Then, using the estimates from \Cref{case:N_even,case:N_odd} as above, we obtain
\[
\area(A_i)\geq c_1r+c_2(1-r)
\geq \min\{c_1,c_2\}
=c_3
\]
for $i=1,2$. Thus, in this case as well, both numbers defining the area exchange are bounded below by a positive constant independent of the lengths of $I$ and $I'$.
\end{proof}

We will use the following statement about the convergence of tremored surfaces in the proof of \Cref{thm:NonRecurrenceofRel}.

\begin{PROP}
Let $M,M_1,M_2,\dots\in\E$ be surfaces whose horizontal flows are aperiodic and non-minimal. Let
\[
\beta_i=\beta_{v_i}
\]
be foliation cocycles with uniformly bounded total variation; that is, there exists $a>0$ such that for all $i$
\[
|L|_{M_i}(\beta_i)\leq a.
\]
Assume that the signed mass is constant along the sequence, \ie
\[
L_{M_i}(\beta_i)=c.
\]
If
\[
\trem_{\beta_i}M_i
\longrightarrow
\trem_\beta M
\]
for some $\beta$ of signed mass $c$, then
\[
\beta_i
\longrightarrow
\beta
\qquad\text{and}\qquad
M_i
\longrightarrow
M.
\]
\label{prop:convergence}
\end{PROP}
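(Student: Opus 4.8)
The plan is to reconstruct $M_i$ and $\beta_i$ from the convergent sequence $trem_{\beta_i}M_i$ by using the tremor equation \eqref{eq:holtrem}, which tells us that tremoring changes the $x$-holonomy by $t\beta$ and leaves the $y$-holonomy untouched. First I would pass to the marked stratum: lift $trem_\beta M$ to a marked surface $\widehat{N}=[(f,N)]$ lying in the domain $V_\tau$ of an orbifold chart for some triangulation $\tau$, and for $i$ large lift $trem_{\beta_i}M_i$ to marked surfaces $\widehat{N}_i$ in the same chart with $\widehat{N}_i\to\widehat{N}$; this is legitimate because $\varpi$ is a local homeomorphism off the orbifold locus and because, by $\operatorname{Mod}(S,\Sigma)$-equivariance, convergence downstairs lifts to convergence upstairs after choosing representatives. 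In the chart coordinates $\operatorname{dev}:\H_m\to H^1(S,\Sigma;\R^2)$ we then have $\operatorname{dev}(\widehat{N}_i)\to\operatorname{dev}(\widehat{N})$, i.e. the full holonomy cohomology classes converge.

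Next I would split via \eqref{eq:splitting}: $\operatorname{dev}(\widehat{N}_i)=(\holx_{\widehat N_i},\holy_{\widehat N_i})$, and likewise for $\widehat N$. The $y$-part is tremor-invariant, so $\holy_{\widehat N_i}=\holy_{\widehat M_i}$ and $\holy_{\widehat N}=\holy_{\widehat M}$; convergence of the $y$-parts therefore gives $\holy_{\widehat M_i}\to\holy_{\widehat M}$. For the $x$-part, \eqref{eq:holtrem} with $t=1$ gives $\holx_{\widehat N_i}=\holx_{\widehat M_i}+\beta_i$ and $\holx_{\widehat N}=\holx_{\widehat M}+\beta$. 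Thus I must separate the two summands $\holx_{\widehat M_i}$ and $\beta_i$ from their known sum. The key extra input is the hypothesis that the $M_i$ are horizontally aperiodic and non-minimal with, by \Cref{lemma:AperiodicNonMinimal}, exactly two minimal components: this means $\tremspace_{M_i}$ is exactly two-dimensional (spanned by the two foliation cocycles $\beta_1^{(i)},\beta_2^{(i)}$ dual to the restrictions of $\d y$ to the two minimal components), and $\holx_{M_i}$ lies in a complementary subspace that varies continuously. Concretely, the decomposition of $H^1(S,\Sigma;\R_x)$ into "genuine horizontal holonomy of the torus pieces" plus "the two-dimensional tremor space" is continuous in the surface, so from $\holx_{\widehat M_i}+\beta_i \to \holx_{\widehat M}+\beta$ together with $\holy_{\widehat M_i}\to\holy_{\widehat M}$ (which controls where the two minimal components, hence $\tremspace_{M_i}$, sit) I can project and conclude $\beta_i\to\beta$ and $\holx_{\widehat M_i}\to\holx_{\widehat M}$ separately.

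Having $\operatorname{dev}(\widehat M_i)=(\holx_{\widehat M_i},\holy_{\widehat M_i})\to(\holx_{\widehat M},\holy_{\widehat M})=\operatorname{dev}(\widehat M)$ in the chart, applying the chart homeomorphism $\Psi_\tau$ gives $\widehat M_i\to\widehat M$ in $\H_m$, and then $\varpi$ continuous gives $M_i\to M$; combined with $\beta_i\to\beta$ this finishes the proof. I expect the main obstacle to be the middle paragraph: making rigorous the claim that the splitting "horizontal holonomy of the base torus $\oplus$ two-dimensional tremor subspace" depends continuously on the surface, so that a convergent sum of two varying subspaces forces convergence of each component. The cleanest route is probably to use the signed-mass and total-variation hypotheses: $L_{M_i}(\beta_i)=c$ is constant, $|L_{M_i}|(\beta_i)\le a$ is bounded, and $L$ is continuous and, restricted to the two-dimensional cone $C^+_{M_i}$ (a closed subset of $T\H$ by the Proposition on $C^+_{\H,1}$), proper enough to keep $\beta_i$ in a compact set; any subsequential limit $\beta'$ of $\beta_i$ is then a foliation cocycle on the limit surface with $\holx_{\widehat M}+\beta'=\holx_{\widehat M}+\beta$, forcing $\beta'=\beta$, whence the whole sequence converges. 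This compactness-plus-uniqueness argument sidesteps the need to track the splitting explicitly and is the step I would write out in full detail.
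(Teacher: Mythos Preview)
Your final ``compactness-plus-uniqueness'' paragraph is where the argument breaks. After extracting a subsequential limit $\beta'$ of $\beta_i$, you write $\holx_{\widehat M}+\beta'=\holx_{\widehat M}+\beta$ and conclude $\beta'=\beta$. But the left-hand side should carry the holonomy of the \emph{limit surface} $M'$ of the $M_{i_k}$, not of $M$: all you actually know is $\holx_{\widehat{M'}}+\beta'=\holx_{\widehat M}+\beta$ (and $\holy_{\widehat{M'}}=\holy_{\widehat M}$), i.e.\ $trem_{\beta'}M'=trem_\beta M$. This equation is perfectly consistent with $M'\ne M$ and $\beta'\ne\beta$: any pair $(M',\beta')=(trem_{\beta-\beta'}M,\beta')$ satisfies it. So ``forcing $\beta'=\beta$'' does not follow, and neither the continuous-splitting idea in your middle paragraph nor the bounds on $|L|$ and $L$ supply the missing rigidity.

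What the paper does at exactly this point is prove a structural lemma about $\E$ (\Cref{lemma:UOrbitAndTremor}): if $M,M'\in\E$ are horizontally aperiodic non-minimal and $trem_\beta M=trem_{\beta'}M'$, then $M'=u_sM$ and $\beta=\beta'+s\,\d y$ for some $s\in\R$. This uses the slit description $T_1\#_IT_2$ with $T_1=T_2$ and the fact that tremoring shears the two tori by possibly different amounts; membership in $\E$ forces the shears to agree. Then the signed-mass hypothesis $L_{M'}(\beta')=c=L_M(\beta)$ kills the remaining parameter via $c=c+s\cdot\mathrm{Area}(M')$, hence $s=0$. You never invoke or prove this rigidity, and your splitting heuristic (``$\holx_{M_i}$ lies in a complementary subspace that varies continuously'') is essentially an informal restatement of it. A second, smaller gap: you never establish that $\{M_i\}$ is precompact. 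The paper does this separately (\Cref{lemma:compactsequences}) by showing that a short saddle connection on $M_i$ would, because $|L_{M_i}|(\beta_i)\le a$ controls $\beta_i(\gamma_i)$ via the $y$-length of $\gamma_i$, produce a short saddle connection on $trem_{\beta_i}M_i$, contradicting convergence. Without this, you cannot pass to a convergent subsequence of $M_i$ in the first place.
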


\begin{RMK}
The converse holds and is a particular case of the more general \cite[Proposition 4.7]{2020tremors}. 
\end{RMK}

Before the proof, we will introduce 
\Cref{lemma:UOrbitAndTremor} and \Cref{lemma:compactsequences}, which will be used in the argument of the proof of \Cref{prop:convergence}.

\begin{LEM}
\label{lemma:UOrbitAndTremor}
Let $M,N \in \E$ be horizontally aperiodic and non-minimal surfaces. Let $\beta$ and $\beta'$ be foliation cocycles on $M$ and $N$, respectively. If
\[
\trem_\beta M = \trem_{\beta'} N,
\]
then $M =u_sN$ for some $s \in \R.$ In particular 
\[
\beta = \beta' + s\d y.
\]
\end{LEM}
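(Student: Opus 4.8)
The strategy is to extract the torus quotient structure and read off the statement from the data of the slit constructions. First I would recall that $M, N \in \E$ are horizontally aperiodic and non-minimal, so by \Cref{lemma:AperiodicNonMinimal} each admits a slit construction $M = T_1 \#_{I_M} T_2$ and $N = T_1' \#_{I_N} T_2'$ with $T = T_1 = T_2$ and $T' = T_1' = T_2'$ horizontally minimal tori and $I_M$, $I_N$ horizontal segments. The key observation is that the tremor $trem_\beta$ with $\beta$ a signed foliation cocycle supported on the (two) minimal components of the horizontal flow acts on each minimal component by a horizontal shear; concretely, as noted in the proof of \Cref{prop:recurrencecase}, $trem_\beta M = (u_{s_1}T_1)\#_{I_M}(u_{s_2}T_2)$ for appropriate $s_1,s_2$ determined by $\beta$, and since $M$ is aperiodic the two pieces are the two minimal components. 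Thus $trem_\beta M$ still lies in $\E$ and still has the same underlying combinatorial slit data as $M$; in particular the horizontal foliation (as a measured foliation, up to the shear) and the absolute periods in the $y$-direction are unchanged, and the quotient torus $T$ is recovered up to a horizontal shear from $trem_\beta M$.

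Next, from the hypothesis $trem_\beta M = trem_{\beta'} N$ I would argue that the two slit constructions describe the same surface in $\E$, hence their associated quotient tori agree up to the ambiguity introduced by the tremors: writing $trem_\beta M = (u_{s_1}T)\#(u_{s_2}T)$ and $trem_{\beta'}N = (u_{s_1'}T')\#(u_{s_2'}T')$, equality of the surfaces forces $u_{s_1}T$ and $u_{s_2}T$ (the minimal components of the common surface) to coincide as a set with $u_{s_1'}T'$ and $u_{s_2'}T'$. Comparing absolute homology periods, the lattices of $T$ and $T'$ are related by a single horizontal shear $u_\sigma$, i.e. $T' = u_\sigma T$ for some $\sigma$, and the horizontal slit segments match up. Applying the inverse tremor (tremors are invertible by \cite[Proposition 4.8]{2020tremors} and commute appropriately) and using $trem_{-\beta}trem_\beta M = M$ together with the identity $u_s M = trem_{s\d y}M$ from \Cref{eq:holtrem}, I would conclude $M = u_s N$ for the resulting $s = \sigma$ (or the relevant combination of the shear parameters). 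The final assertion $\beta = \beta' + s\,\d y$ then follows by comparing $hol^{(x)}$ of $trem_\beta M = trem_{\beta'}N$ on all of $H_1$: by \Cref{eq:holtrem}, $hol^{(x)}_{trem_\beta M} = hol^{(x)}_M + \beta$ and $hol^{(x)}_{trem_{\beta'}N} = hol^{(x)}_N + \beta'$, while $M = u_s N$ gives $hol^{(x)}_M = hol^{(x)}_N + s\,hol^{(y)}_N$; subtracting yields $\beta = \beta' + s\,\d y$ as cohomology classes.

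The main obstacle I anticipate is making rigorous the claim that the horizontal shear parameters are forced to match up — that is, ruling out the possibility that $trem_\beta M = trem_{\beta'} N$ holds with the two tori genuinely different (not related by a horizontal shear) but conspiring through the slit identification to give the same surface. Here one needs to use that $M$ and $N$ are horizontally \emph{aperiodic} and \emph{non-minimal}, so that the decomposition into exactly two minimal components is canonical (by \cite[Theorem 1.1, item (2)]{Lindsey}), and that each minimal component determines its quotient torus up to horizontal shear via the transverse measure of the horizontal foliation together with the absolute $y$-periods, which are tremor-invariant by \Cref{eq:holtrem}. One must also be careful that the two minimal components of the common surface could a priori be matched to those of $N$ in either order; since the two copies $T_1, T_2$ are identical tori this introduces no genuine ambiguity, but it should be addressed. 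Once the matching of tori is established, the rest is bookkeeping with \Cref{lemma:Slits} and the period computations above.
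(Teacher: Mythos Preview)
Your approach is correct and shares the same core idea as the paper's proof: use the slit construction of a horizontally aperiodic, non-minimal surface in $\E$, recognize that a tremor acts on each minimal component as a horizontal shear, and then use aperiodicity of the underlying torus to pin down the shear parameter. The deduction of $\beta = \beta' + s\,\d y$ via \Cref{eq:holtrem} is also exactly how the paper finishes.

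The one organizational difference is worth pointing out because it dissolves precisely the ``main obstacle'' you flag. Instead of writing out slit constructions for $M$ and $N$ separately and then matching their minimal components, the paper first invokes \cite[Proposition 6.1]{2020tremors} to write $N = trem_{\beta - \beta'} M$ directly (identifying $\beta'$ with a cocycle on $M$). Now only the slit construction of $M$ is needed: one gets $N = (u_{s_1}T_1)\#_I(u_{s_2}T_2)$ immediately, and the hypothesis $N \in \E$ together with horizontal aperiodicity of $T$ forces $s_1 = s_2$. There is no separate torus $T'$ to compare to, no ordering ambiguity of components, and no need to argue that the slits line up --- the slit $I$ is carried along for free. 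Your route works, but this shortcut is cleaner and sidesteps the bookkeeping you anticipated.
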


\begin{proof}
Since $\trem_\beta M = \trem_{\beta'} N$, we can write
\[
N = \trem_{\beta-\beta'}M
\]
by identifying $\beta' \in \tremspace_N$ with some foliation cocycle in $\tremspace_M$. 
We denote the identified foliation cocycle on $M$ by $\beta'$ to avoid introducing more notation. 
This was formalized in \cite[Proposition 6.1]{2020tremors} using the tremor comparison homeomorphism, or TCH. 
In \Cref{lemma:AperiodicNonMinimal}, we saw that $M$ can be written as a slit construction $T_1 \#_I T_2$ along an embedded horizontal slit $I$, where $T_1$ and $T_2$ are two identical copies of a torus $T$. The tremored surface $\trem_{\beta-\beta'}M$ can be represented by the slit construction
\[
(u_{s_1}T_1) \#_I (u_{s_2}T_2)
\]
where $s_1,s_2$ are real numbers and $I$ is still an embedded horizontal segment on each $u_{s_i}T_i$. This is because the action of the matrix $u_s$ sends horizontal segments to horizontal segments.
\textcolor{black}{
\begin{CLAIME}
Since $T$ is horizontally aperiodic, we have $s_1=s_2$.
\end{CLAIME}
}
\begin{proof}
\renewcommand{\qedsymbol}{$\triangle$}
For a translation surface $X$, let
\[
\operatorname{Per}(X)
:=
\left\{
\hol_X(\eta): \eta\in H_1(X;\mathbb Z)
\right\}
\subset \mathbb R^2
\]
denote its group of absolute periods.

Suppose, by contradiction, that
\[
s_3:=s_2-s_1\neq 0.
\]
Since $N\in\E$, and
\[
N=(u_{s_1}T_1)\#_I(u_{s_2}T_2)
=
u_{s_1}\bigl(T_1\#_I(u_{s_3}T_2)\bigr),
\]
the $U$-invariance of $\E$ implies that
\[
T_1\#_I(u_{s_3}T_2)\in\E.
\]

Let $\Lambda$ be the lattice defining the torus $T$. Since
\[
T_1=T_2=T,
\]
the absolute periods coming from $T_1$ are precisely $\Lambda$, while the absolute periods coming from $u_{s_3}T_2$ are precisely $u_{s_3}\Lambda$. Hence
\[
\operatorname{Per}\bigl(T_1\#_I(u_{s_3}T_2)\bigr)
\supset
\Lambda+u_{s_3}\Lambda .
\]

On the other hand, since
\[
T_1\#_I(u_{s_3}T_2)\in\E,
\]
this surface is obtained by gluing two copies of a single torus. Thus there is a lattice $\Lambda'$ such that
\[
\operatorname{Per}\bigl(T_1\#_I(u_{s_3}T_2)\bigr)=\Lambda'.
\]

We now show that this is impossible when $s_3\neq 0$. Indeed, for every $v = (x,y) \in \Lambda$, we have
\begin{equation}
\label{eq:lattice_trick}
(s_3y,0) =u_{s_3}v - v
\in \Lambda+u_{s_3}\Lambda \subset 
\Lambda'.
\end{equation}

Since $T$ is horizontally aperiodic, $\Lambda$ does not contain nonzero horizontal vectors. However, there exists a sequence of vectors \[ \{(x_n,y_n)\}_{n\in\N}\subset \Lambda \] such that, for all $n\in\N$, $y_n\neq 0$, and \[ \lim_{n\to\infty} y_n=0. \] Therefore, \Cref{eq:lattice_trick}
implies that
\[
\{(s_3y_n,0)\}_{n \in \N} \subset \Lambda'
\]
is a sequence of nonzero vectors accumulating at the origin. This is a contradiction, since $\Lambda'$ is uniformly discrete. Thus $s_3=0$, and therefore $s_1=s_2$. 
\end{proof}
The claim proves that $N=u_sM$ for some $s\in\mathbb R$. Since $\trem_{s\,\d y}M=u_sM$, and since \[ \trem_{\beta}M=\trem_{\beta'+s\,\d y}M, \] we have \[ \beta=\beta'+s\,\d y, \] because the surface $M$ is horizontally aperiodic.
\end{proof}

\begin{LEM}
Under the assumptions of \Cref{prop:convergence}, if
\[
\trem_{\beta_i}M_i
\longrightarrow
\trem_{\beta}M,
\]
then the sequence $\{M_i\}_{i\geq 1}$ is contained in a compact set.
\label{lemma:compactsequences}
\end{LEM}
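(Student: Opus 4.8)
The plan is to argue by contradiction: suppose $\{M_i\}$ leaves every compact subset of $\E$. Since the surfaces $M_i \in \E$ are horizontally aperiodic and non-minimal, by \Cref{lemma:AperiodicNonMinimal} each $M_i$ has a slit representation $T_1^{(i)} \#_{I_i} T_2^{(i)}$ with $T^{(i)}$ horizontally minimal and $I_i$ a horizontal segment. The hypothesis $trem_{\beta_i}M_i \to trem_\beta M$ says in particular that the sequence $trem_{\beta_i} M_i$ stays in a compact set, so by passing to a subsequence I may assume the torus data of $trem_{\beta_i}M_i$ — namely the lattice vectors $\gamma_1^{(i)}, \delta_1^{(i)}, \gamma_2^{(i)}, \delta_2^{(i)}$ and the slit vector — all converge, with the lattices converging to honest (non-degenerate) lattices. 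Because the tremor only changes the $x$-component of holonomy (by \Cref{eq:holtrem}), and because the total variation $|L_{M_i}|(\beta_i) \le a$ is uniformly bounded, the tremor moves each holonomy vector of $M_i$ by a bounded amount; hence the holonomy data of $M_i$ itself also lies in a bounded region of $(\R^2)^5$. So after a further subsequence, $hol_{M_i}$ converges to some limiting cohomology class $h_\infty \in H^1$.

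The only way $\{M_i\}$ can fail to be precompact in $\E$, given that its holonomy data converges, is that in the limit some geometric degeneration occurs: either a saddle connection shrinks to zero (a singularity collision, so the limit is not in $\H(1,1)$), or the underlying torus $T^{(i)}$ degenerates (its systole goes to $0$). I would rule both out. For the torus degeneration: the absolute periods $\gamma_j^{(i)}, \delta_j^{(i)}$ of $M_i$ differ from those of $trem_{\beta_i}M_i$ only in the $x$-coordinate and by a bounded amount, and the latter converge to a non-degenerate lattice; but a short systole of $T^{(i)}$ would force some absolute period to be short, which (since shortening is controlled only in the $x$-direction and the $y$-coordinate is unchanged by the tremor) would force the corresponding period of $trem_{\beta_i}M_i$ to be short too — contradicting convergence to a non-degenerate limit. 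For the collision of the two singularities $\zeta_1,\zeta_2$: this is a relative period (the slit $I_i$ or an equivalent short representative $I_i'$) shrinking to $0$. Here I use \Cref{lemma:Slits} together with the area-exchange analysis: if the slit were shrinking, then $trem_{\beta_i}M_i$ would be converging to a surface in $\H(0,0)$ (a boundary stratum), not to $trem_\beta M \in \H(1,1)$, contradiction. In fact the cleanest route is: $trem_{\beta_i}M_i \to trem_\beta M$ and $trem_\beta M \in \H(1,1)$ is a genuine two-singularity surface, so for large $i$ the surfaces $trem_{\beta_i}M_i$ have a saddle connection between $\zeta_1$ and $\zeta_2$ of length bounded below; since tremoring changes the slit's holonomy only in the $x$-direction and by a bounded amount, the corresponding relative period of $M_i$ is also bounded below, so no collision occurs.

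Having excluded both degenerations, the limiting data $h_\infty$ defines an honest translation surface $M_\infty \in \H(1,1)$, and $M_i \to M_\infty$; moreover $M_\infty \in \E$ since $\E$ is closed in $\H(1,1)$ (it is cut out by the existence of a translation cover to $\H(0,0)$, a closed condition — alternatively, $\E$ is $G$-invariant and the slit representation passes to the limit). This shows $\{M_i\}$ is precompact in $\E$, i.e. contained in a compact set, contradicting our assumption and completing the proof. The main obstacle I anticipate is the bookkeeping needed to convert ``bounded total variation of $\beta_i$'' into a genuine uniform bound on how far the tremor moves each (absolute and relative) holonomy vector of $M_i$; this requires relating $|L_{M_i}|(\beta_i)$ to the $x$-displacements $\beta_i(\gamma)$ of the basis classes $\gamma$, which one gets from the decomposition $\beta_i = c_{1,i}\beta_{1,i} + c_{2,i}\beta_{2,i}$ into the two minimal-component cocycles and the fact that the total mass controls the coefficients $c_{j,i}$, hence controls $\beta_i$ on all of $H_1$. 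Once that uniform bound is in hand, ruling out the two types of degeneration is routine.
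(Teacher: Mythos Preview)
Your proposal contains the right core idea but buries it under unnecessary machinery, and your preferred ``cleanest route'' actually has a gap.

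The paper's proof is four lines: if $\{M_i\}$ escapes every compact set, pick saddle connections $\gamma_i$ on $M_i$ with $\|hol_{M_i}(\gamma_i)\|\to 0$. Since $|L|_{M_i}(\beta_i)\le a$ and the $y$-length of $\gamma_i$ tends to zero, $\beta_i(\gamma_i)\to 0$. Then the tremor holonomy formula gives $\|hol_{trem_{\beta_i}M_i}(\phi_i(\gamma_i))\|\to 0$, contradicting precompactness of $\{trem_{\beta_i}M_i\}$. No slit coordinates, no case split between absolute and relative degenerations, no passage through $(\R^2)^5$.

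Your detour through holonomy bases and separate torus/collision cases is not wrong in spirit, but the two-case split is artificial: Mumford's compactness criterion already packages both into ``some saddle connection shrinks,'' and your first collision argument (``if the slit were shrinking, then $trem_{\beta_i}M_i$ would degenerate'') is precisely the paper's argument specialized to that saddle connection. The problem is your \emph{second} version, the one you call cleanest: you take a saddle connection in $trem_{\beta_i}M_i$ of length $\ge\ell$ and try to pull it back to a long relative period in $M_i$. But a lower bound on the $trem$-side holonomy $(x_i',y_i)$ together with a bounded $x$-shift does \emph{not} give a lower bound on $(x_i'-\beta_i(\cdot),y_i)$: if $y_i$ is small and $|x_i'|$ happens to be close to the bound on $|\beta_i|$, the pullback can be arbitrarily short. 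Moreover, even if one relative period stayed long, that would not rule out some \emph{other} saddle connection between $\zeta_1,\zeta_2$ collapsing. So this route does not close; stick with the direct contrapositive (short in $M_i$ $\Rightarrow$ short in $trem_{\beta_i}M_i$), which is exactly what the paper does and what you need the total-variation bound for.
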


\begin{proof}
Suppose, by contradiction, that $M_i\to\infty$. Then there exists a sequence of saddle connections $\gamma_1,\gamma_2,\dots$
in $M_1,M_2,\dots$, respectively, such that
\[
\lVert \hol_{M_i}(\gamma_i)\rVert\longrightarrow 0.
\]

\textcolor{black}{
For each $i$, let
\[
\widehat{M_i}:=[(f_i,M_i)]
\]
be a marking of $M_i$, and let
\[
\widehat{\trem_{\beta_i}M_i}:=[(g_i,\trem_{\beta_i}M_i)]
\]
be a marking of $\trem_{\beta_i}M_i$. Define
\[
\widehat{\gamma_i}:=f_i^{-1}(\gamma_i).
\]
By \Cref{eq:holtrem},
\[
\begin{split}
\hol_{\widehat{\trem_{\beta_i}M_i}}(\widehat{\gamma_i})
&=
\left(
\holx_{\widehat{M_i}}(\widehat{\gamma_i})
+(f_i^*\beta_i)(\widehat{\gamma_i}),
\holy_{\widehat{M_i}}(\widehat{\gamma_i})
\right) \\
&=
\left(
\holx_{M_i}(\gamma_i)+\beta_i(\gamma_i),
\holy_{M_i}(\gamma_i)
\right).
\end{split}
\]
}
Since the cocycles $\beta_i$ have uniformly bounded total variation, there exists $a>0$ such that
\[
|L|_{M_i}(\beta_i)\leq a
\]
for all $i$. Therefore
\[
\beta_i(\gamma_i)\longrightarrow 0.
\]
Since
\[
\lVert \hol_{M_i}(\gamma_i)\rVert\longrightarrow 0,
\]
it follows that
\[
\lVert
\hol_{\widehat{\trem_{\beta_i}M_i}}(\widehat{\gamma_i})
\rVert
\longrightarrow 0.
\]
Hence the sequence $\{\trem_{\beta_i}M_i\}_{i\in\N}$ is not contained in a compact set. This contradicts the assumption that
\[
\trem_{\beta_i}M_i
\longrightarrow
\trem_{\beta}M.
\]
Therefore $\{M_i\}_{i\geq 1}$ is contained in a compact set.
\end{proof}

\begin{proof}[Proof of \Cref{prop:convergence}]
Assume that
\[
\trem_{\beta_i}M_i
\longrightarrow
\trem_\beta M.
\]
We will prove that
\[
\beta_i
\longrightarrow
\beta
\qquad\text{and}\qquad
M_i
\longrightarrow
M.
\]

By \Cref{lemma:compactsequences}, the sequence $\{M_i\}$ is contained in a compact set. Hence, after passing to a subsequence $\{M_{i_k}\}$, we may assume that there exists $N\in\E$ such that
\[
M_{i_k} \longrightarrow N.
\]
The foliation cocycles $\beta_i=\beta_{v_i}$ correspond to transverse measures $v_i$. These measures are uniformly bounded because the total variations of the foliation cocycles are uniformly bounded; that is, there exists $a>0$ such that for all $i$
\[
|L|_{M_i}(\beta_i)\leq a.
\]
Therefore, after passing to a further subsequence, we may assume that
\[
\beta_{i_k}
\longrightarrow
\beta'
\]
for some foliation cocycle $\beta'$ on $N$. To avoid introducing additional notation, we denote this further subsequence again by $\{\beta_{i_k}\}$.

Thus we have obtained subsequences $\{M_{i_k}\}$ and $\{\beta_{i_k}\}$ such that
\[
M_{i_k} \longrightarrow N
\qquad\text{and}\qquad
\beta_{i_k} \longrightarrow \beta'.
\]

By the continuity of the tremor map, proved in \cite[Proposition 4.7]{2020tremors}, we have
\[
\trem_{\beta_{i_k}}M_{i_k} \longrightarrow \trem_{\beta'}N.
\]
On the other hand, by hypothesis,
\[
\trem_{\beta_i}M_i \longrightarrow \trem_\beta M.
\]
Therefore
\[
\trem_\beta M=\trem_{\beta'}N.
\]
By \Cref{lemma:UOrbitAndTremor}, it follows that
\[
\beta=\beta'+s\,\d y
\]
for some $s\in\R$.

Since each $\beta_{i_k}$ has signed mass equal to $c$, and since
\[
M_{i_k} \longrightarrow N
\qquad\text{and}\qquad
\beta_{i_k} \longrightarrow \beta',
\]
continuity of the map $(\eta,X)\mapsto L_X(\eta)$ implies that
\[
L_N(\beta')
=
\lim_{k\to\infty}L_{M_{i_k}}(\beta_{i_k})
=
c.
\]

Now compute the signed masses of $\beta$ and $\beta'$. Since $\beta$ has signed mass $c$, we have
\[
c
=
L_M(\beta)
=
L_N(\beta')+L_N(s\,\d y)
=
c+s\area(N).
\]
Hence $s=0$. Therefore, by \Cref{lemma:UOrbitAndTremor},
\[
\beta=\beta'
\qquad\text{and}\qquad
M=N.
\]

We have shown that every convergent subsequence of $\{M_i\}$ and $\{\beta_i\}$ has the same limit, namely $M$ and $\beta$, respectively. Since these sequences are contained in compact sets, it follows that
\[
M_i \longrightarrow M
\qquad\text{and}\qquad
\beta_i \longrightarrow \beta.
\]
\end{proof}

\begin{proof}[Proof of \Cref{thm:NonRecurrenceofRel}]
Write
\[
M=T_1\#_I T_2
\]
with $I$ horizontal. We first reduce to the case where the horizontal slit $I$ has length $a$.

\textcolor{black}{We claim that it is enough to prove the following normalized statement:} if $M=T_1\#_I T_2$, where $I$ is horizontal and has length $a$, then there exists $C>0$ such that, for every sequence of positive numbers $t_i\to\infty$,
\begin{equation}
d\bigl(\trem_\beta M,\Rel_{t_i}\trem_\beta M\bigr)\geq C.
\label{eq:KeyEquation}
\end{equation}

\textcolor{black}{
Indeed, suppose that the normalized statement has been proved. Let
\[
M'=T_1\#_{I'}T_2
\]
be a surface in the same real Rel ray, where $I'$ is horizontal but has arbitrary length. Choose $s\in\mathbb R$ such that
\[
M=\Rel_s M'=T_1\#_I T_2
\]
has horizontal slit $I$ of length $a$.\\
We prove the \namecref{thm:NonRecurrenceofRel} for $M'$. Arguing by contradiction, suppose that there exists a sequence of positive numbers $t_i\to\infty$ such that
\[
\trem_\beta M'
=
\lim_{i\to\infty}\Rel_{t_i}\trem_\beta M'.
\]
By the continuity of $\Rel_s(\cdot)$ at $\trem_\beta M'$, see \cite[Proposition 4.3]{HoroDynamics}, we obtain
\[
\begin{split}
\trem_\beta M
&=
\trem_\beta\Rel_s M' \\
&=
\Rel_s\trem_\beta M' \\
&=
\lim_{i\to\infty}\Rel_s\Rel_{t_i}\trem_\beta M'.
\end{split}
\]
Here we used that tremors and real Rel commute, namely
\[
\trem_\beta\Rel_s M'=\Rel_s\trem_\beta M',
\]
by \cite[Proposition 6.7]{2020tremors}.\\
Finally, by the flow property of real Rel flow, see \cite[Proposition 4.5]{HoroDynamics}, we have
\[
\begin{split}
\Rel_s\Rel_{t_i}\trem_\beta M'
= &
\Rel_{t_i}\Rel_s\trem_\beta M'\\
= &
\Rel_{t_i}\trem_\beta\Rel_s M'\\
= &
\Rel_{t_i}\trem_\beta M.
\end{split}
\]
Therefore
\[
\trem_\beta M
=
\lim_{i\to\infty}\Rel_{t_i}\trem_\beta M,
\]
which contradicts \Cref{eq:KeyEquation}. Hence the theorem for $M'$ follows from the normalized statement.
}

It remains to prove the normalized statement. Thus, for the rest of the proof, we assume that
\[
M=T_1\#_I T_2,
\]
where $I$ is horizontal and has length $a$.

Assume, by contradiction, that there exists a sequence $t_i\to\infty$ such that
\[
\trem_{\beta}M
=
\lim_{i\to\infty}\Rel_{t_i}\trem_{\beta}M.
\]
For each $i$, we can write
\[
\Rel_{t_i}M=T_1\#_{I_i}T_2=T_1\#_{I'_i}T_2,
\]
where $I_i$ is long and horizontal, while $I'_i$ is the short representative given by \Cref{lemma:short_slit_representative}.

In the notation of \Cref{prop:convergence}, set
\[
M_i:=\Rel_{t_i}M.
\]
Let $\beta_i$ be the unique foliation cocycle on $\Rel_{t_i}M$ such that
\[
\Rel_{t_i}\trem_\beta M
=
\trem_{\beta_i}\Rel_{t_i}M.
\]
Since $\beta$ is non-atomic, each $\beta_i$ is non-atomic. Moreover, for all $i$,
\[
L_{M_i}(\beta_i)=L_M(\beta)=c,
\]
and there exists a constant $d>0$ such that
\[
|L|_{M_i}(\beta_i)\leq d
\qquad\text{and}\qquad
|L|_M(\beta)\leq d.
\]

By \Cref{prop:convergence}, we have
\[
\beta_i\to\beta
\qquad\text{and}\qquad
M_i\to M.
\]
The convergence $M_i\to M$ implies that we can choose the short segments $I'_i$ on $T$ so that
\[
I'_i\to I.
\]
We will now show that the convergence
\[
\beta_i\to\beta
\]
contradicts the assumption that $\alpha$ is badly approximable.

\textcolor{black}{By \Cref{eq:decomp_cocycle}, we may write
\[
\beta=a_1\d y|_{T_1}+a_2\d y|_{T_2},
\]
with $a_1\neq a_2$.}

\textcolor{black}{
We choose $\gamma_1$ to be a vertical closed curve contained in $T_1$ and passing through the first singularity of $M$. Then
\[
\beta(\gamma_1)=a_1.
\]
Similarly, for each $i$, we choose $\gamma_{1,i}$ to be a vertical closed curve contained in $T_1$ for the representation
\[
M_i=T_1\#_{I'_i}T_2
\]
and passing through the first singularity of $M_i$.
}

We will show that
\[
\beta_i(\gamma_{1,i})\not\longrightarrow \beta(\gamma_1)=a_1,
\]
contradicting the convergence $\beta_i\to\beta$.
The key point is that, since $\alpha$ is badly approximable, the area exchange between $I_i$ and $I'_i$ is bounded below by a positive constant independent of $t_i$.

Denote by $B^i_1$ and $B^i_2$ the two numbers defining the area exchange between $I_i$ and $I'_i$. 
\textcolor{black}{
The value of $\beta_i(\gamma_{1,i})$ can be computed from the area exchange between the two slit presentations
\[
M_i=T_1\#_{I_i}T_2
\qquad\text{and}\qquad
M_i=T_1\#_{I'_i}T_2.
\]
Indeed, in the presentation $T_1\#_{I_i}T_2$, the cocycle is given by
\[
\beta=a_1\d y|_{T_1}+a_2\d y|_{T_2}.
\]
After cutting and regluing along $I'_i$, the vertical curve $\gamma_{1,i}$ in the first torus of the presentation
\[
M_i=T_1\#_{I'_i}T_2
\]
passes through the two regions determined by the area exchange. The proportions of this curve lying in the regions carrying the coefficients $a_1$ and $a_2$ are
\[
\frac{B^i_1}{\area(T)}
\qquad\text{and}\qquad
\frac{B^i_2}{\area(T)}.
\]
Therefore, in the period coordinates adapted to the presentation $T_1\#_{I'_i}T_2$, either
\[
\beta_i(\gamma_{1,i})
=
\frac{B^i_1}{\area(T)}a_1
+
\frac{B^i_2}{\area(T)}a_2
\]
or
\[
\beta_i(\gamma_{1,i})
=
\frac{B^i_2}{\area(T)}a_1
+
\frac{B^i_1}{\area(T)}a_2.
\]
This depends on the choice of labels for the colored regions. Without loss of generality, assume that the former equation holds. The proof in the case where the latter equation holds is analogous.
}
Then
\begin{equation}
\begin{split}
|\beta_i(\gamma_{1,i})-\beta(\gamma_1)|
&=
\left|
\frac{B^i_1}{\area(T)}a_1
+
\frac{B^i_2}{\area(T)}a_2
-
a_1
\right|\\
&=
\left|
\left(\frac{B^i_1}{\area(T)}-1\right)a_1
+
\frac{B^i_2}{\area(T)}a_2
\right|\\
&=
\left|
-\frac{B^i_2}{\area(T)}a_1
+
\frac{B^i_2}{\area(T)}a_2
\right|\\
&=
\frac{B^i_2}{\area(T)}|a_1-a_2|.
\end{split}
\label{eq:DiffCoordinates}
\end{equation}
Here we used that
\[
B^i_1+B^i_2=\area(T).
\]

By \Cref{lemma:boundedarea}, the area exchange is bounded below, namely, there exists $c>0$ such that
\[
B^i_j\geq c
\]
for all $j=1,2$ and all $i$. Therefore \Cref{eq:DiffCoordinates} implies that for all $i$
\begin{equation}\label{eq:bound_on_cocycle_difference}
|\beta_i(\gamma_{1,i})-\beta(\gamma_1)|
\geq
\frac{c}{\area(T)}|a_1-a_2|
\neq 0.
\end{equation}

\textcolor{black}{Since $M_i\to M$, we may choose markings
\[
\widehat{M_i}=[(f_i,M_i)]
\qquad\text{and}\qquad
\widehat{M}=[(f,M)]
\]
such that
\[
\widehat{M_i} \longrightarrow \widehat{M}.
\]
After choosing these markings, let
\[
\eta=f^{-1}(\gamma_1)
\]
be a closed curve in the model surface $S$. The curve $\gamma_{1,i}$ on $M_i$ represents the same homology class as $f_i(\eta)$.}
Therefore, the convergence
\[
\beta_i \longrightarrow \beta
\]
would imply
\[
\beta_i(\gamma_{1,i})
\longrightarrow
\beta(\gamma_1).
\]
However, this is impossible by \eqref{eq:bound_on_cocycle_difference}, giving a contradiction.
\end{proof}

\bibliography{references}
\bibliographystyle{alpha}

\end{document}